\documentclass[11pt]{amsart}   

\setlength{\evensidemargin}{0.1in}
\setlength{\oddsidemargin}{0.1in}
\setlength{\textwidth}{6.3in}
\setlength{\topmargin}{0.0in}
\setlength{\textheight}{8.7in}

\setcounter{totalnumber}{50}
\setcounter{topnumber}{50}
\setcounter{bottomnumber}{50}

\usepackage{verbatim}
\usepackage{amssymb,amsthm,amsmath}
\usepackage{color}
\definecolor{darkblue}{rgb}{0, 0, .4}
\definecolor{grey}{rgb}{.7, .7, .7}

\usepackage{ifpdf}
\ifpdf
  \usepackage[pdftex]{epsfig}

  \usepackage{lscape}

  \usepackage[breaklinks]{hyperref}
  \hypersetup{
            colorlinks=true,
            linkcolor=darkblue,
            anchorcolor=darkblue,
            citecolor=darkblue,
            pagecolor=darkblue,
            urlcolor=darkblue,
            pdftitle={},
            pdfauthor={}
  }
\else
  \usepackage[dvips]{epsfig}
  \usepackage{lscape}

  \newcommand{\href}[2]{#2}
  \newcommand{\url}[2]{#2}
\fi


\newtheorem{theorem}{Theorem}[section]
\newtheorem{lemma}[theorem]{Lemma}

\theoremstyle{definition}
\newtheorem{definition}[theorem]{Definition}
\newtheorem{example}[theorem]{Example}

\theoremstyle{remark}
\newtheorem{remark}[theorem]{Remark}

\numberwithin{equation}{section}


\theoremstyle{theorem}
\newtheorem{corollary}[theorem]{Corollary}

\newtheorem{proposition}[theorem]{Proposition}

\newcommand{\N}[0]{\mathbb{N}}
\newcommand{\Z}[0]{\mathbb{Z}}

\newcommand{\row}{\rightarrow}

\newcommand{\ontop}[2]{\genfrac{}{}{0pt}{}{#1}{#2}}

\input xy 
\xyoption{all} 

\usepackage{graphicx}

\newcommand{\tld}{\widetilde}
\newcommand{\lam}{\lambda}


\newcommand{\res}{\mathsf{res}}
\newcommand{\mres}{\mathsf{mres}}
\newcommand{\p}{\|}

\newcommand{\Fa}{\mathcal{A}}
\newcommand{\Fc}{\mathcal{C}}
\newcommand{\Fb}{\mathcal{B}}
\newcommand{\Fw}{\mathcal{W}}
\newcommand{\Fr}{\mathcal{R}}

\newcommand{\abacus}[1]{{ \tiny \xymatrix @-2.2pc { #1 } }}

\newcommand{\ci}[1]{{\xy*{ #1 }*\cir<10pt>{}\endxy}}  
\newcommand{\nc}[1]{{\xy*{ #1 }*i\cir<10pt>{}\endxy}}  

\usepackage{tikz}

\def\myscalebox{ \begingroup \catcode`\&=\active \myscaleboxi}
\def\myscaleboxi#1#2{ \scalebox{#1}{#2} \endgroup} 

\newcommand\tightpartition[1]{
\begin{tikzpicture}
\tikzstyle{shaded}=[rectangle, minimum size=1.4cm, fill=gray!20, draw=gray!20];
\tikzstyle{blueshaded}=[rectangle, minimum size=1.4cm, fill=blue!15, draw=blue!15];
\tikzstyle{unshaded}=[rectangle, minimum size=1.4cm];
\tikzstyle{part}=[rectangle, minimum size=1.4cm, draw=black];
\tikzstyle{shadedpart}=[rectangle, minimum size=1.4cm, fill=gray!20, draw=black];
\tikzstyle{blueshadedpart}=[rectangle, minimum size=1.4cm, fill=blue!15, draw=black];
\matrix[column sep=-0.35cm,row sep=-0.35cm]
{ #1 };
\end{tikzpicture} } 

\newcommand\partition[1]{
\begin{tikzpicture}
\tikzstyle{shaded}=[rectangle, minimum size=1.4cm, fill=gray!20, draw=gray!20];
\tikzstyle{blueshaded}=[rectangle, minimum size=1.4cm, fill=blue!15, draw=blue!15];
\tikzstyle{unshaded}=[rectangle, minimum size=1.4cm];
\tikzstyle{part}=[rectangle, minimum size=1.4cm, draw=black];
\tikzstyle{shadedpart}=[rectangle, minimum size=1.4cm, fill=gray!20, draw=black];
\tikzstyle{blueshadedpart}=[rectangle, minimum size=1.4cm, fill=blue!15, draw=black];
\matrix[column sep=0pt,row sep=0pt ]
{ #1 };
\end{tikzpicture} } 

\newcommand\ns[1]{ \node [shaded]{$#1$}; }
\newcommand\nn[1]{ \node [unshaded]{$#1$}; }
\newcommand\np[1]{ \node [part]{$#1$}; }
\newcommand\nsp[1]{ \node [shadedpart]{$#1$}; }
\newcommand\nb[1]{ \node [blueshaded]{$#1$}; }
\newcommand\nbp[1]{ \node [blueshadedpart]{$#1$}; }


\begin{document}

\title{Abacus models for parabolic quotients of affine Weyl groups}

\begin{abstract}
We introduce abacus diagrams that describe the minimal length coset
representatives of affine Weyl groups in types $\widetilde{C}/C$,
$\widetilde{B}/D$, $\widetilde{B}/B$ and $\widetilde{D}/D$.  These abacus
diagrams use a realization of the affine Weyl group $\widetilde{C}$ due to
Eriksson to generalize a construction of James for the symmetric group.  We
also describe several combinatorial models for these parabolic quotients that
generalize classical results in type $\widetilde{A}$ related to core
partitions.
\end{abstract}

\author{Christopher R.\ H.\ Hanusa}
\address{Department of Mathematics \\ Queens College (CUNY) \\ 65-30 Kissena Blvd. \\ Flushing, NY 11367}
\email{\href{mailto:christopher.hanusa@qc.cuny.edu}{\texttt{christopher.hanusa@qc.cuny.edu}}}
\urladdr{\url{http://people.qc.cuny.edu/faculty/christopher.hanusa/}}

\author{Brant C. Jones}
\address{Department of Mathematics and Statistics, MSC 1911, James Madison University, Harrisonburg, VA 22807}
\email{\href{mailto:brant@math.jmu.edu}{\texttt{brant@math.jmu.edu}}}
\urladdr{\url{http://www.math.jmu.edu/\~brant/}}

\keywords{abacus, core partition, bounded partition, Grassmannian, affine Weyl group}

\date{\today}

\maketitle


\bigskip
\section{Introduction}\label{s:background}

Let $\widetilde{W}$ be an affine Weyl group, and $W$ be the corresponding finite
Weyl group.  Then the cosets of $W$ in $\widetilde{W}$, often denoted
$\widetilde{W}/W$, have a remarkable combinatorial structure with connections to
diverse structures in algebra and geometry, including affine Grassmannians
\cite{LM,billey--mitchell}, characters and modular representations for the
symmetric group \cite{JK,kleshchev,G}, and crystal bases of quantum groups
\cite{MM,Kwon}.  Combinatorially, the elements in $\widetilde{W}$ can be
understood as pairs from $\widetilde{W}/W \times W$ by the parabolic
decomposition (see e.g. \cite[Proposition 2.4.4]{b-b}).

In type $\widetilde{A}$, these cosets correspond to a profoundly versatile
combinatorial object known as an abacus diagram.  From the abacus diagram, one
can read off related combinatorial objects such as root lattice coordinates,
core partitions, and the bounded partitions used in \cite{Erik2}, \cite{LM} and
\cite{billey--mitchell}.  The goal of the present paper is to extend the abacus
model to types $\widetilde{B}$, $\widetilde{C}$, and $\widetilde{D}$, and
define analogous families of combinatorial objects in these settings.  Some of
these structures are not, strictly speaking, new.  Nevertheless, we believe
that our development using abacus diagrams unifies much of the folklore, and we
hope that it will be useful to researchers and students interested in extending
results from type $\widetilde{A}$ to the other affine Weyl groups.  In this
sense, our paper is a companion to \cite{berg-jones-vazirani}, \cite{Erik2} and
\cite{lascoux-cores}.

The following diagram illustrates six families of combinatorial objects that are
all in bijection.  Each family has an action of $\widetilde{W}$, a Coxeter
length function, and is partially ordered by the Bruhat order.  We will devote
one section to each of these objects and give the bijections between them using
type-independent language.
\[ \myscalebox{0.7}{\begin{tikzpicture}
    \matrix[row sep=1.0cm,column sep=3.0cm] {
    \node[rectangle,draw] (A) {\parbox{1.2in}{sorted mirrored \newline permutations of $\mathbb{Z}$}}; &
    \node[rectangle,draw] (B) {\parbox{1.2in}{Abacus diagrams}}; &
    \node[rectangle,draw] (C) {\parbox{1.3in}{Core partitions}}; &
    \node[rectangle,draw] (D) {\parbox{1.2in}{Bounded partitions}}; \\
    &
    \node[rectangle,draw] (E) {\parbox{1.2in}{Root lattice points}}; &
    \node[rectangle,draw] (F) {\parbox{1.3in}{Canonical reduced expressions for minimal length coset representatives}}; &
    & \\
    };
    \path[->]
        (A) edge[thick] (B)
        (B) edge[thick] (C)
        (C) edge[thick] (D)
        (B) edge[thick] (E)
        (C) edge[thick] (F)
        ;
\end{tikzpicture}} \]
We believe that the abacus diagrams and core partitions we introduce have not
appeared in this generality before.  In fact, we show in
Theorem~\ref{t:core_bruhat} that our construction answers a question of Billey
and Mitchell; see Section~\ref{s:bruhat}.  Several authors have used
combinatorics related to bounded partitions, and we show how these objects are
naturally related to abaci in Section~\ref{s:bounded}.  We also obtain some
formulas for Coxeter length in Section~\ref{s:coxeter_length_formulas} that
appear to be new.  To avoid interrupting the exposition, we postpone a few of
the longer arguments from earlier sections to Section~\ref{s:proofs}.
Section~\ref{s:future_work} briefly suggests some ideas for further research.

In order to simplify the notation, we use a convention of overloading the
definitions of our bijections.  We let the output of the functions ($\Fw$,
$\Fa$, $\Fc$, $\Fr$, $\Fb$) be the corresponding combinatorial interpretation
($\mathbb{Z}$-permutation, abacus diagram, core partition, canonical reduced
expression, bounded partition, respectively), no matter the input.  For
example, if $\beta$ is a bounded partition, then its corresponding abacus
diagram is $\Fa(\beta)$.

\bigskip
\section{George groups and $\mathbb{Z}$-permutations}

\subsection{Definitions}

We follow the conventions of Bj\"orner and Brenti in \cite[Chapter 8]{b-b}.

\begin{definition}
Fix a positive integer $n$ and let $N=2n+1$.  We say that a bijection $w:\Z\row \Z$ is a
{\bf mirrored $\mathbb{Z}$-permutation} if
\begin{equation}\label{e:8.42}
w(i+N) = w(i) + N, \text{ and }
\end{equation}
\begin{equation}\label{e:8.43}
w(-i) = -w(i).
\end{equation}
for all $i\in \Z$.
\end{definition}

Eriksson and Eriksson \cite{Erik2} use these mirrored permutations to give a
unified description of the finite and affine Weyl groups, based on ideas from
\cite{HErik94}.  It turns out that the collection of mirrored
$\mathbb{Z}$-permutations forms a realization of the affine Coxeter group
$\widetilde{C}_n$, where the group operation is composition of
$\mathbb{Z}$-permutations.  Since the Coxeter groups $\widetilde{B}_n$ and
$\widetilde{D}_n$ are subgroups of $\widetilde{C}_n$, every element in any of
these groups can be represented as such a permutation.  Green
\cite{green-full-heaps} uses the theory of full heaps to obtain this and
related representations of affine Weyl groups.

\begin{remark}
A mirrored $\mathbb{Z}$-permutation $w$ is completely determined by its action
on $\{1, 2, \ldots n\}$.  Also, Equations~\eqref{e:8.42} and \eqref{e:8.43}
imply that $w(i)=i$ for all $i=0$ mod $N$.  
\end{remark}

We have Coxeter generators whose images $\left(w(1), w(2), \ldots, w(n)\right)$
are given by
\[ s_i = \left(1, 2, \ldots, i-1, i+1, i, i+2, \ldots, n \right) \ \ \ \ \text{for $1 \leq i \leq n-1$} \]
\[ s_0^C = \left(-1, 2, 3, \ldots, n \right) \]
\[ s_0^D = \left(-2, -1, 3, 4, \ldots, n \right) \]
\[ s_n^C = \left(1, 2, \ldots, n-1, n+1 \right) \]
\[ s_n^D = \left(1, 2, \ldots, n-2, n+1, n+2 \right) \]
and we extend each of these to an action on $\mathbb{Z}$ via $\eqref{e:8.42}$
and $\eqref{e:8.43}$.  Observe that each of these generators interchange
infinitely many entries of $\mathbb{Z}$ by Equation~(\ref{e:8.42}).

\begin{theorem}\label{t:bb8}
The collection of mirrored $\mathbb{Z}$-permutations that satisfy the conditions
in the second column of Table~\ref{t:mzp_def} form a realization of the
corresponding affine Coxeter group $\widetilde{C}$, $\widetilde{B}$, or
$\widetilde{D}$.  The collection of mirrored $\mathbb{Z}$-permutations that
additionally satisfy the sorting conditions in the third column of
Table~\ref{t:mzp_def} form a collection of minimal length coset representatives
for the corresponding parabolic quotient shown in the first column of
Table~\ref{t:mzp_def}.  The corresponding Coxeter graph is shown in the fourth
column of Table~\ref{t:mzp_def}.
\end{theorem}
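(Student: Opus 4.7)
The plan is to reduce the first assertion to the realization theorem of Eriksson--Eriksson and then derive the parabolic quotient characterizations from the standard descent criterion for minimal length coset representatives.

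First I would address the $\widetilde{C}$ case, which is foundational: cite (or verify directly from the generator formulas) the Eriksson--Eriksson result that mirrored $\Z$-permutations under composition realize $\widetilde{C}_n$ with Coxeter generators $s_0^C,s_1,\ldots,s_{n-1},s_n^C$. The verification amounts to checking that each listed generator is indeed a mirrored $\Z$-permutation (use $\eqref{e:8.42}$ and $\eqref{e:8.43}$ to extend from the window $\{1,\ldots,n\}$ to all of $\Z$), that each is an involution, and that the pairwise products satisfy the correct orders $2, 3,$ or $4$ dictated by the $\widetilde{C}_n$ Coxeter graph. These are finite computations on the windows, made uniform by the braid-avoidance structure of signed permutations. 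For $\widetilde{B}_n$ and $\widetilde{D}_n$, I would identify them as the subgroups of $\widetilde{C}_n$ fixed by the appropriate parity conditions (this is precisely what the second column of Table~\ref{t:mzp_def} records) and check that the subgroup generators $s_0^D$, $s_n^D$ together with the common $s_1,\ldots,s_{n-1}$ satisfy the Coxeter presentations for $\widetilde{B}$ and $\widetilde{D}$; the parity conditions are preserved by left and right multiplication by these generators, so the subgroup is closed, and it is well-known (and checkable on the windows) that the index in $\widetilde{C}_n$ matches the expected index of the subgroup.

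Second, for the parabolic quotient statement, I would invoke the standard characterization (see \cite[Proposition 2.4.4]{b-b}): an element $w \in \widetilde{W}$ is the minimal length representative of its coset $wW$ if and only if $\ell(ws_i) > \ell(w)$ for every simple generator $s_i$ of the finite parabolic subgroup $W$. Using Bj\"orner--Brenti's combinatorial criterion for length (which for signed-permutation realizations translates descents at $s_i$ into an inversion-type inequality on consecutive window entries), the condition $\ell(ws_i) > \ell(w)$ at a classical generator $s_i$ with $1\le i\le n-1$ becomes $w(i) < w(i+1)$, and the condition at $s_n^C$ or $s_n^D$ becomes the corresponding inequality on $w(n-1),w(n),w(n+1)$ forced by the parity of the type. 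Imposing all such inequalities simultaneously yields exactly the sorting conditions listed in the third column of Table~\ref{t:mzp_def}, proving one direction. The reverse direction is the observation that any mirrored $\Z$-permutation satisfying the sorting conditions has no descent at any $s_i \in W$ and hence must be minimal in its coset.

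The main obstacle I anticipate is treating the mixed-type quotients $\widetilde{B}/D$ and $\widetilde{C}/C$ (and similarly $\widetilde{B}/B$, $\widetilde{D}/D$) uniformly: one must be careful because the finite Weyl subgroup embedded in $\widetilde{W}$ is not always the ``obvious'' subgroup generated by $s_1,\ldots,s_n$ of the same letter superscript, and the sorting condition at the boundary indices ($i=0$ or $i=n$) depends on which generator is being omitted. I would handle this by treating each row of Table~\ref{t:mzp_def} as a separate book-keeping exercise: for each case write down which parabolic generators are retained in $W$, translate their descent conditions via the one-line window, and verify that the tabulated sorting conditions match. Finally, I would remark that bijectivity between the sorted representatives and cosets $\widetilde{W}/W$ follows from existence and uniqueness of minimal coset representatives in any Coxeter system.
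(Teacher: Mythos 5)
Your overall strategy matches what actually underlies this theorem: the paper's own ``proof'' is a one-line citation to Eriksson--Eriksson \cite{Erik2} and to \cite[Section 8]{b-b}, and the argument you outline (verify the generators and relations for $\widetilde{C}_n$, realize $\widetilde{B}_n$ and $\widetilde{D}_n$ as the parity-defined subgroups, then read off the sorting conditions from the descent criterion for minimal coset representatives) is precisely the argument those references carry out. Your identification of the bookkeeping hazard --- that the finite parabolic $W$ and hence the boundary descent conditions change from row to row of Table~\ref{t:mzp_def} --- is the right thing to worry about, and your handling of it is sound.

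There is, however, one substantive gap in the first half of your outline. Checking that $s_0, s_1, \ldots, s_n$ are involutions and satisfy the braid relations of the relevant Coxeter graph only produces a \emph{surjective} homomorphism from the abstract Coxeter group onto the subgroup of mirrored $\mathbb{Z}$-permutations that these elements generate. To get a ``realization'' you still need two further facts: (a) that every mirrored $\mathbb{Z}$-permutation satisfying the second-column condition is actually a product of the listed generators (generation), and (b) that the homomorphism is injective (faithfulness). Neither follows from the relation check. The standard remedy, and the one used in \cite{Erik2} and \cite[Chapter 8]{b-b}, is to exhibit a combinatorial inversion statistic on the base window and prove it equals Coxeter length; generation then follows by inducting on this statistic (any nonidentity mirrored permutation has a descent, and applying that generator strictly decreases the statistic), and faithfulness follows because a nontrivial kernel element would have length zero. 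Note that your second half already leans on exactly such a length formula when you translate $\ell(ws_i) > \ell(w)$ into window inequalities, so this ingredient must be established explicitly rather than assumed; once it is, both halves of your argument go through.
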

\begin{proof}
Proofs can be found in \cite{Erik2} and \cite[Section 8]{b-b}.
\end{proof}

To describe the essential data that determines an mirrored
$\mathbb{Z}$-permutation, we observe an equivalent symmetry. 


\begin{landscape}
\begin{table}
\begin{tabular}{|p{0.3in}|p{2.5in}|p{2.5in}|p{2.5in}|}
\hline
Type & Conditions on $\mathbb{Z}$-permutation for Coxeter group elements & Sorting conditions for minimal
length coset representatives & Coxeter graph \\
\hline
$\widetilde{C}/C$ & \  & $w(1) < w(2) < \cdots < w(n) < w(n+1)$ 
& 
\myscalebox{0.45}{\begin{tikzpicture}
    \matrix[row sep=1.0cm,column sep=1.0cm] {
    \node[circle,draw] (A) {$s_0^C$}; & 
    \node[circle,draw] (B) {$s_1$}; &
    \node[circle,draw] (C) {$s_2$}; &
    \node (D) {$\cdots$}; &
    \node[circle,draw] (E) {$s_{n-2}$}; &
    \node[circle,draw] (F) {$s_{n-1}$}; &
    \node[circle,draw] (G) {$s_{n}^C$}; \\
    };
    \path[-]
        (A) edge[thick] node[above,midway]{4} (B) 
        (B) edge[thick] (C) 
        (C) edge[thick] (D) 
        (D) edge[thick] (E) 
        (E) edge[thick] (F) 
        (F) edge[thick] node[above,midway]{4} (G) ;
\end{tikzpicture}}
\\
\hline
$\widetilde{B}/B$ & $\left|i \in \Z : i \leq 0, w(i) \geq 1\right| \equiv 0 \mod 2$

\vspace{0.2in}

{\small (By (\ref{e:8.43}), this is equivalent to requiring the number of negative
entries lying to the right of position zero is even.)}
& 
$w(1) < w(2) < \cdots < w(n) < w(n+1)$

\vspace{0.2in}

{\small (Comparing with the condition on the previous row, we see that elements
of $\widetilde{B}_n/B_n$ are elements of $\widetilde{C}_n/C_n$.) }
& 
\myscalebox{0.5}{\begin{tikzpicture}
    \matrix[row sep=1.0cm,column sep=1.0cm] {
    \node[circle,draw] (B) {$s_{1}$}; &
    & & & & \\
    \node[circle,draw] (A) {$s_0^D$}; & 
    \node[circle,draw] (C) {$s_2$}; &
    \node (D) {$\cdots$}; &
    \node[circle,draw] (E) {$s_{n-2}$}; &
    \node[circle,draw] (F) {$s_{n-1}$}; &
    \node[circle,draw] (G) {$s_{n}^C$}; \\
    };
    \path[-]
        (A) edge[thick] (C)
        (B) edge[thick] (C) 
        (C) edge[thick] (D) 
        (D) edge[thick] (E) 
        (E) edge[thick] (F) 
        (F) edge[thick] node[above,midway]{4} (G) ;
\end{tikzpicture}}
\\
\hline
$\widetilde{B}/D$ & 
$\left|i \in \Z : i \leq n, w(i) \geq n+1\right| \equiv 0 \mod 2$
& 
$w(1) < w(2) < \cdots < w(n) < w(n+2)$

\vspace{0.2in}

{\small (Comparing with the previous conditions, we see that elements of
$\widetilde{B}_n/D_n$ are {\em not} necessarily elements of
$\widetilde{C}_n/C_n$, even though $\widetilde{B}_n$ is a subgroup of
$\widetilde{C}_n$.)}

& 
\myscalebox{0.5}{\begin{tikzpicture}
    \matrix[row sep=1.0cm,column sep=1.0cm] {
    & & & & &
    \node[circle,draw] (F) {$s_{n-1}$}; \\
    \node[circle,draw] (A) {$s_0^C$}; & 
    \node[circle,draw] (B) {$s_1$}; &
    \node[circle,draw] (C) {$s_2$}; &
    \node (D) {$\cdots$}; &
    \node[circle,draw] (E) {$s_{n-2}$}; &
    \node[circle,draw] (G) {$s_{n}^D$}; \\
    };
    \path[-]
        (A) edge[thick] node[above,midway]{4} (B) 
        (B) edge[thick] (C) 
        (C) edge[thick] (D) 
        (D) edge[thick] (E) 
        (E) edge[thick] (F) 
        (E) edge[thick] (G) ;
\end{tikzpicture}}
\\
\hline
$\widetilde{D}/D$ & 
$\left|i \in \Z : i \leq 0, w(i) \geq 1\right| \equiv 0 \mod 2$ and
$\left|i \in \Z : i \leq n, w(i) \geq n+1\right| \equiv 0 \mod 2$

& 
$w(1) < w(2) < \cdots < w(n) < w(n+2)$

\vspace{0.2in}

{\small (Comparing with the previous conditions, we see that elements of
$\widetilde{D}_n/D_n$ are also elements of $\widetilde{B}_n/D_n$.)}

& 
\myscalebox{0.5}{\begin{tikzpicture}
    \matrix[row sep=1.0cm,column sep=1.0cm] {
    \node[circle,draw] (B) {$s_{1}$}; &
    & & & 
    \node[circle,draw] (F) {$s_{n-1}$}; \\
    \node[circle,draw] (A) {$s_0^D$}; & 
    \node[circle,draw] (C) {$s_2$}; &
    \node (D) {$\cdots$}; &
    \node[circle,draw] (E) {$s_{n-2}$}; &
    \node[circle,draw] (G) {$s_{n}^D$}; \\
    };
    \path[-]
        (A) edge[thick] (C)
        (B) edge[thick] (C) 
        (C) edge[thick] (D) 
        (D) edge[thick] (E) 
        (E) edge[thick] (F) 
        (E) edge[thick] (G) ;
\end{tikzpicture}}
\\
\hline
\end{tabular}
\caption{Realizations of affine Coxeter groups}\label{t:mzp_def}
\end{table}
\end{landscape}


\begin{lemma}{\bf (Balance Lemma)}\label{l:ol_balance}
If $w$ is an mirrored $\mathbb{Z}$-permutation, then we have
\begin{equation}\label{e:local_balance}
w(i) + w(N-i) = N \text{ for all } i = 1, 2, \ldots n.
\end{equation}
Conversely, if $w : \Z \rightarrow \Z$ is a bijection that satisfies (\ref{e:8.42}) and
(\ref{e:local_balance}), then $w$ is an mirrored $\mathbb{Z}$-permutation.
\end{lemma}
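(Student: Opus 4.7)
The plan is to read the balance identity directly off the two defining equations for the forward direction, and then to reverse the same chain of substitutions for the converse. For the forward implication, suppose $w$ is a mirrored $\mathbb{Z}$-permutation and fix $i \in \{1, \ldots, n\}$. I would apply equation~(\ref{e:8.42}) with argument $-i$ to rewrite $w(N-i) = w(-i) + N$, and then use equation~(\ref{e:8.43}) to substitute $w(-i) = -w(i)$. Combining these two steps yields $w(i) + w(N-i) = N$ immediately.

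For the converse, assume $w : \Z \to \Z$ is a bijection satisfying (\ref{e:8.42}) and (\ref{e:local_balance}); the goal is to recover (\ref{e:8.43}). Running the same computation in reverse, for $i \in \{1, \ldots, n\}$ the balance relation gives $w(N-i) = N - w(i)$, while (\ref{e:8.42}) gives $w(N-i) = w(-i) + N$, and subtraction produces $w(-i) = -w(i)$. To extend this to arbitrary $i \in \Z$, I would write $i = kN + r$ with $r \in \{0, 1, \ldots, N-1\}$ and use (\ref{e:8.42}) to reduce to the representative $r$: for $r \in \{1, \ldots, n\}$ the mirror identity was just established, for $r \in \{n+1, \ldots, N-1\}$ it follows by the substitution $r \mapsto N-r$ together with one more application of (\ref{e:8.42}), and iterating (\ref{e:8.42}) then propagates $w(-i) = -w(i)$ to every $i$ whose residue mod $N$ is nonzero.

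The main obstacle I anticipate is the boundary case $r = 0$, where one must show $w(0) = 0$. Here I would use bijectivity crucially: because (\ref{e:8.42}) lets $w$ descend to a permutation $\overline{w}$ of $\Z/N\Z$, and the mirror identities just derived force the set $\overline{w}(\{1, \ldots, N-1\})$ to be closed under negation, the missing residue $\overline{w}(0)$ must be a fixed point of negation mod $N$. Since $N = 2n+1$ is odd, this forces $\overline{w}(0) = 0$, i.e.\ $w(0) \in N\Z$. Pinning down $w(0) = 0$ exactly is the most delicate point in the argument; I expect it to follow from the implicit normalization in the hypothesis, or equivalently from extending (\ref{e:local_balance}) to $i = 0$, which together with (\ref{e:8.42}) immediately yields $2w(0) = 0$.
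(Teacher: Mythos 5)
Your proof is correct and follows essentially the same route as the paper's: both directions are the same two-step substitution chain, with your forward direction applying \eqref{e:8.42} and \eqref{e:8.43} in the opposite order from the paper (which writes $w(i)+w(N-i)=w(i)-w(i-N)=w(i)-(w(i)-N)=N$), and your converse reversing that chain and then propagating along residue classes via \eqref{e:8.42}, where the paper instead covers all negative arguments at once by writing them as $-i-kN$ with $i\in\{1,\dots,N\}$ and $k\geq 0$. The one point where you hedge, namely $w(0)=0$, is in fact a genuine imprecision in the statement that the paper's proof silently skips: it invokes $w(N-i)=N-w(i)$ for $i=N$, which is not among the hypotheses, and indeed a bijection satisfying \eqref{e:8.42} and \eqref{e:local_balance} need not satisfy $w(0)=0$ (take $w(mN)=(m+1)N$ for all $m$ and $w(j)=j$ otherwise). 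So the converse really does require the normalization $w(iN)=iN$, which is exactly what is imposed in Lemma~\ref{l:olwd}, the place where the converse is used. Your observation that bijectivity together with the oddness of $N$ forces $w(0)\in N\Z$ is correct and is as far as the stated hypotheses can take you; your instinct to attribute the last step to an implicit normalization is the right resolution.
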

\begin{proof}
Equations~\eqref{e:8.42} and \eqref{e:8.43} imply that for all $i\in \Z$,
$w(i)+w(N-i) = w(i)-w(i-N)=w(i)-\big(w(i)-N\big)=N$; in particular, this is true
for $1\leq i\leq n$.  

To prove the converse, we must show that an infinite permutation $w:\Z\row\Z$
satisfying \eqref{e:8.42} and \eqref{e:local_balance} satisfies \eqref{e:8.43}.
Equation~\eqref{e:8.42} implies
\[ w(-i - kN) = w(N-i) - (k+1) N \]
for any $i \in \{1, \ldots, N\}$ and $k \geq 0$.  By \eqref{e:local_balance},
we have $w(N-i) = N - w(i)$ so
\[ w(-i - kN) = (N - w(i)) - (k+1) N = -w(i) - kN = -(w(i) + kN) = -w(i+kN), \]
as was to be shown.
\end{proof}

Given a mirrored $\mathbb{Z}$-permutation, we call the ordered sequence $\left[w(1),
w(2), \ldots, w(2n)\right]$ the {\bf base window} of $w$.  Since the set of
mirrored $\mathbb{Z}$-permutations acts on itself by composition of functions,
we have an action of $\widetilde{W}$ on the base window notation.  The left
action interchanges values while the right action interchanges positions.  We
have labeled the node of the Coxeter graph of $\widetilde{W}$ that is added to
the Coxeter graph of $W$ by $s_0$.  Then, our cosets have the form
$\widetilde{w}W$ (where $\widetilde{w} \in \widetilde{W}$), and the minimal
length coset representatives all have $s_0$ as a unique right descent.  

We can characterize the base windows that arise.

\begin{lemma}\label{l:olwd}
An ordered collection $\left[w(1), w(2), \ldots, w(2n)\right]$ of integers is the base
window for an element of $\widetilde{C}_n$ if and only if
\begin{itemize}
    \item $w(1), \ldots, w(2n)$ have distinct residue mod $N$,
    \item $w(1), \ldots, w(2n)$ are not equivalent to $0$ mod $N$, and
    \item $w(i) + w(N-i) = N$ for each $i = 1, \ldots, 2n = N-1$.
\end{itemize}
\end{lemma}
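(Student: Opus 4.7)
The plan is to prove both implications by extending a base window to all of $\mathbb{Z}$ via Equation~\eqref{e:8.42} and then invoking the Balance Lemma for the converse direction.

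For the forward direction, I assume $w$ is a mirrored $\mathbb{Z}$-permutation and verify each of the three bullets in turn. Distinctness of residues follows because $w(i) \equiv w(j) \pmod{N}$ with $1 \leq i, j \leq 2n$ forces $w(j) = w(i) + kN = w(i+kN)$ by \eqref{e:8.42}, and injectivity of $w$ plus the bound $|j-i| \leq 2n - 1 < N$ then gives $k=0$ and $i=j$. The second bullet is immediate from the remark stating that $w(i) = i$ whenever $i \equiv 0 \pmod{N}$, combined with injectivity: the values $w(1), \ldots, w(2n)$ cannot collide with $w(0) = 0, w(\pm N) = \pm N$, etc. The third bullet is a restatement of Equation~\eqref{e:local_balance} from the Balance Lemma for $i = 1, \ldots, n$, extended to $i = 1, \ldots, 2n$ using \eqref{e:8.42} to translate the identity $w(i) + w(N-i) = N$ by $N$ on both arguments.

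For the converse, suppose $[w(1), \ldots, w(2n)]$ satisfies the three conditions. I extend it to a function on $\mathbb{Z}$ by declaring $w(kN) = kN$ for every $k \in \mathbb{Z}$ and setting $w(i+kN) = w(i) + kN$ for $1 \leq i \leq 2n$ and $k \in \mathbb{Z}$, which builds \eqref{e:8.42} into the definition. To see this is a bijection of $\mathbb{Z}$, note that the combined residues $\{w(0), w(1), \ldots, w(2n)\} \bmod N$ partition $\mathbb{Z}/N\mathbb{Z}$: the first two bullets ensure that $w(1), \ldots, w(2n)$ occupy each of the $N-1$ nonzero residue classes exactly once, leaving the zero class for $w(0) = 0$. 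Thus every integer lies in exactly one fiber and $w$ is a bijection.

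Finally, the third bullet gives $w(i) + w(N-i) = N$ for $i = 1, \ldots, 2n$, which in particular holds for $i = 1, \ldots, n$, so the hypotheses of the converse part of the Balance Lemma (Lemma~\ref{l:ol_balance}) are satisfied, and $w$ is a mirrored $\mathbb{Z}$-permutation. The main thing to be careful about is the consistency of extending by \eqref{e:8.42} together with the implicit choice $w(0) = 0$: one must check that setting $w(kN) = kN$ does not conflict with the bijectivity coming from the other residues, which is exactly where the ``not equivalent to $0$ mod $N$'' hypothesis is used. Everything else is bookkeeping once the Balance Lemma is in hand.
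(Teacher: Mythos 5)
Your proof is correct, and your converse direction is essentially the paper's: extend the window to all of $\mathbb{Z}$ via \eqref{e:8.42}, set $w(kN)=kN$, and invoke the converse half of the Balance Lemma~\ref{l:ol_balance} (you additionally spell out the bijectivity check via residue classes, which the paper leaves implicit but which is exactly the role of the first two bullets). Where you genuinely diverge is the forward direction. The paper argues by induction on Coxeter length: the identity's base window satisfies the three conditions and each Coxeter generator preserves them. You instead verify the three conditions directly from the axioms of a mirrored $\mathbb{Z}$-permutation, using injectivity together with \eqref{e:8.42} for the first two bullets and the Balance Lemma for the third (note that for the third bullet no translation by $N$ is really needed, since the identity $w(i)+w(N-i)=N$ is symmetric under $i \mapsto N-i$, so the cases $i=n+1,\ldots,2n$ follow immediately from $i=1,\ldots,n$; alternatively the proof of Lemma~\ref{l:ol_balance} already establishes it for all $i \in \Z$). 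Your direct argument is shorter and makes visible exactly which axiom each bullet depends on; the paper's inductive argument has the virtue of generalizing uniformly to the other types in Table~\ref{t:mzp_def}, where membership in the subgroup is characterized by extra parity conditions that are most easily seen to be preserved generator by generator. Both are legitimate, and both ultimately rest on Theorem~\ref{t:bb8} identifying $\widetilde{C}_n$ with the set of mirrored $\mathbb{Z}$-permutations.
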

\begin{proof}
Given such a collection of integers, extend $\left[w(1), \ldots, w(2n)\right]$
to a $\mathbb{Z}$-permutation $w$ using \eqref{e:8.42}, and set $w(iN) = iN$ for
all $i \in \mathbb{Z}$.  The third condition on $\left[w(1), \ldots,
w(2n)\right]$ ensures that $w$ is a mirrored $\mathbb{Z}$-permutation by
the Balance Lemma~\ref{l:ol_balance}.

On the other hand, each of the three conditions is preserved when we apply a
Coxeter generator $s_i$, so each element of $\widetilde{C}_n$ satisfies these
conditions by induction on Coxeter length.
\end{proof}

With the conventions we have adopted in Table~\ref{t:mzp_def}, we can also
prove that no two minimal length coset representatives contain the same entries
in their base window.

\begin{lemma}\label{l:bdoldet}
Suppose $a_1, \ldots, a_n$ is a collection of integers such that each $a_i$ is
equivalent to $i$ mod $N$.  Then, there exists a unique element $w \in
\widetilde{B}_n/D_n$ that contains $a_1, \ldots, a_n$ among the entries of its
base window $\{w(1), \ldots, w(2n)\}$.
\end{lemma}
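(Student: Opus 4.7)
The plan is to use the Balance Lemma~\ref{l:ol_balance} to reduce the claim to a selection problem on the pairs $\{a_j, N - a_j\}$, and then extract a unique solution from the sorting and parity conditions for $\widetilde{B}/D$ given in Table~\ref{t:mzp_def}. Any candidate $w$ containing $a_j$ in its base window must also contain $b_j := N - a_j$ by balance, so the multiset of base window entries is forced to be $\{a_1, b_1, \ldots, a_n, b_n\}$, and the two members of each pair $\{a_j, b_j\}$ occupy the balance-paired positions $\{k, N-k\}$ for some $k \in \{1, \ldots, n\}$. Writing $p_j := \min(a_j, b_j) < N/2$ and $q_j := \max(a_j, b_j) > N/2$, the element $w$ is thus determined by choosing for each $j$ whether $p_j$ or $q_j$ lies in $\{w(1), \ldots, w(n)\}$. (The $p_j$'s are pairwise distinct, since $p_i = p_j$ would force $a_i \equiv \pm a_j \pmod{N}$ and hence $i + j \equiv 0 \pmod{N}$, which is impossible for $1 \leq i \neq j \leq n$.)

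I next apply the sorting condition $w(1) < \cdots < w(n) < w(n+2)$; combined with $w(n+2) = N - w(n-1)$ from balance, this gives $w(n-1) + w(n) < N$. If two of the chosen values both exceed $N/2$, they occupy $w(n-1), w(n)$ in sorted order and sum to more than $N$, so at most one $q_j$ may be chosen. The ``all-$p_j$'' arrangement, which I call $w^{(0)}$, always works. When exactly one $q_j$ is chosen, the inequality $w(n-1) + q_j < N$ forces the second-largest $p_i$ (for $i \neq j$) to be strictly less than $p_j$, which is possible only when $p_j$ is the unique maximum $p_{j^*}$; call this arrangement $w^{(1)}$. So the sorting condition cuts the $2^n$ candidates down to exactly $w^{(0)}$ and $w^{(1)}$, and these two arrangements agree on every base window position except positions $n$ and $n+1$, where they interchange $p_{j^*}$ and $q_{j^*}$.

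Finally, I show that the $\widetilde{B}/D$ parity statistic $|\{i \leq n : w(i) \geq n+1\}| \pmod{2}$ distinguishes $w^{(0)}$ from $w^{(1)}$. By periodicity and mirroring, $w^{(0)}(i) = w^{(1)}(i)$ unless $i \equiv n$ or $i \equiv n+1 \pmod{N}$, so the counts differ only at positions of the form $i = n - kN$ ($k \geq 0$) and $i = n+1 - kN$ ($k \geq 1$). Setting $m := \lfloor (n - p_{j^*})/N \rfloor \geq 0$ and using $p_{j^*} \leq n$ and $q_{j^*} \geq n+1$, a direct enumeration of the values $p_{j^*} - kN$ versus $q_{j^*} - kN$ (and the dual pair at residue $n+1$) shows that passing from $w^{(0)}$ to $w^{(1)}$ increases the count at residue $n$ by $m + 1$ while decreasing it at residue $n+1$ by $m$, for a net change of exactly $1$. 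Hence the parity flips, and exactly one of $w^{(0)}, w^{(1)}$ lies in $\widetilde{B}_n/D_n$, establishing both existence and uniqueness.

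I expect the main obstacle to be this final parity computation: it requires careful bookkeeping of the infinite periodic tails of $w^{(0)}$ and $w^{(1)}$ at residues $n$ and $n+1$, and verifying that the contributions combine to leave precisely the net difference of $1$ needed to toggle the parity, independent of the particular value of $p_{j^*}$.
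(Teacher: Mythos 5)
Your proof is correct, and its skeleton matches the paper's: the Balance Lemma forces the window entries to be $\{a_j, N-a_j\}$, the sorting condition reduces to exactly two candidates differing by a transposition of the values in positions $n$ and $n+1$, and the parity condition selects exactly one. The two middle steps are executed differently, though. The paper sandwiches $w(n+1)$ via the length-$(2n-1)$ increasing subsequence $w(1)<\cdots<w(n)<w(n+2)<\cdots<w(2n)$ to conclude $\tilde{a}_{n-1}\leq w(n-1)<w(n+1)<w(n+2)\leq\tilde{a}_{n+2}$, hence $w(n+1)\in\{\tilde{a}_n,\tilde{a}_{n+1}\}$; you instead enumerate the $2^n$ choices of one element from each balance pair and cut them down with the equivalent inequality $w(n-1)+w(n)<N$, arriving at the same two candidates $w^{(0)}$ and $w^{(1)}$. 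For the final step the paper simply asserts that since $\tilde{a}_n+\tilde{a}_{n+1}=N$ precisely one candidate satisfies the parity condition, whereas you carry out the computation over the infinite tails at residues $n$ and $n+1$; your bookkeeping is right (the count at residue $n$ changes by $m+1$ and at residue $n+1$ by $-m$, for a net change of $1$, which can also be seen at a glance by telescoping: the two tail sums differ only in their $k=0$ terms, contributing $[q_{j^*}\geq n+1]-[p_{j^*}\geq n+1]=1$). So your argument supplies a detail the paper leaves implicit, at the cost of a slightly longer route to the two-candidate reduction.
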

\begin{proof}
It follows from the Balance Lemma~\ref{l:ol_balance} that whenever $a_i$
appears among the entries of the base window of $w \in \widetilde{B}_n$, then
$N-a_i$ also appears among the entries of the base window of $w$.  Hence, the
entries of the base window $\{a_1, \ldots, a_n, N-a_1, \ldots, N-a_n\}$ are
completely determined by the $a_i$.

For $w$ to be a minimal length coset representative, we must order these
entries to satisfy the condition shown in the third column of Table~\ref{t:mzp_def}
while maintaining the condition shown in the second column of Table~\ref{t:mzp_def}.
Let $\tilde{a}_1, \tilde{a}_2, \ldots, \tilde{a}_{2n}$ denote the entries
$\{a_1, \ldots, a_n, N-a_1, \ldots, N-a_n\}$ of the base window arranged into
increasing order so $\tilde{a}_1 < \tilde{a}_2 < \cdots < \tilde{a}_{2n}$.

By the condition shown in the third column of Table~\ref{t:mzp_def} and \eqref{e:8.42},
we have that $w(n)$ is the only possible descent among the entries of the base
window of $w$, so we have that $w(n+1) < w(n+2)$.  Also, since $w(n) < w(n+2)$,
we have $N - w(n) > N - w(n+2)$ which implies $w(n+1) > w(n-1)$ by
\eqref{e:8.42}.  Since $w(1) < w(2) < \cdots < w(n) < w(n+2) < w(n+3) < \cdots <
w(2n)$ forms an increasing subsequence of length $2n-1$, we have $w(n+2) \leq
\tilde{a}_{n+2}$ and $w(n-1) \geq \tilde{a}_{n-1}$.  Putting these together, we
find 
\[ \tilde{a}_{n-1} \leq w(n-1) < w(n+1) < w(n+2) \leq \tilde{a}_{n+2}. \]  
Thus, $w(n+1)$ must be $\tilde{a}_n$ or $\tilde{a}_{n+1}$.  Therefore, the
entries of the base window of $w \in \widetilde{B}_n/D_n$ are either
$\tilde{a}_1, \tilde{a}_2, \cdots, \tilde{a}_{n-1}, \tilde{a}_n,
\tilde{a}_{n+1}, \tilde{a}_{n+2}, \cdots, \tilde{a}_{2n}$ or $\tilde{a}_1,
\tilde{a}_2, \cdots, \tilde{a}_{n-1}, \tilde{a}_{n+1}, \tilde{a}_{n},
\tilde{a}_{n+2}, \cdots, \tilde{a}_{2n}$.  Since $\tilde{a}_n + \tilde{a}_{n+1}
= N$, precisely one of these satisfies the condition shown in the second column of
Table~\ref{t:mzp_def}.
\end{proof}

\begin{corollary}\label{c:unbw}
Suppose $w, w' \in \widetilde{W}/W$.  If $w \neq w'$ then 
\[ \{w(1), w(2), \ldots, w(2n)\} \neq \{w'(1), w'(2), \ldots, w'(2n)\} \]
as unordered sets.
\end{corollary}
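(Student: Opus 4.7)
The plan is to dispatch the four types of Table~\ref{t:mzp_def} by organizing them according to the sorting condition in the third column. Types $\widetilde{C}_n/C_n$ and $\widetilde{B}_n/B_n$ both impose $w(1) < \cdots < w(n) < w(n+1)$, while $\widetilde{B}_n/D_n$ and $\widetilde{D}_n/D_n$ impose $w(1) < \cdots < w(n) < w(n+2)$. As remarked in the table, $\widetilde{B}_n/B_n \subseteq \widetilde{C}_n/C_n$ and $\widetilde{D}_n/D_n \subseteq \widetilde{B}_n/D_n$, so it is enough to handle the two ``larger'' quotients $\widetilde{C}_n/C_n$ and $\widetilde{B}_n/D_n$: if the corollary holds in the ambient quotient, it automatically holds in the sub-quotient.

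For the $\widetilde{C}_n/C_n$ case, I plan to strengthen the sorting hypothesis to show that the entire base window $w(1), w(2), \ldots, w(2n)$ is strictly increasing. The Balance Lemma~\ref{l:ol_balance} supplies $w(n+k) = N - w(n+1-k)$ for $k = 1, \ldots, n$, so the hypothesis $w(1) < \cdots < w(n)$ negates to give $w(n+1) < \cdots < w(2n)$; combined with $w(n) < w(n+1)$ this produces $w(1) < w(2) < \cdots < w(2n)$. Once the base window is monotone, the unordered set $\{w(1), \ldots, w(2n)\}$ determines the ordered sequence, and then \eqref{e:8.42} and \eqref{e:8.43} determine $w$ on all of $\mathbb{Z}$.

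For the $\widetilde{B}_n/D_n$ case, Lemma~\ref{l:bdoldet} does essentially all of the work. By Lemma~\ref{l:olwd}, the entries of any base window have distinct nonzero residues mod $N$, so the unordered set $\{w(1), \ldots, w(2n)\}$ specifies, for each $i \in \{1, \ldots, n\}$, a unique value $a_i$ with $a_i \equiv i \pmod{N}$. Two elements $w, w' \in \widetilde{B}_n/D_n$ sharing an unordered base window therefore determine the same $a_1, \ldots, a_n$, and the uniqueness clause of Lemma~\ref{l:bdoldet} forces $w = w'$. The whole argument is essentially bookkeeping; the only substantive step is the monotonicity observation in the first case, which is a one-line consequence of the Balance Lemma.
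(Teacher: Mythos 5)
Your proof is correct and follows essentially the same route as the paper's: the sorting condition (made fully monotone via the Balance Lemma~\ref{l:ol_balance}) pins down the ordering of the base window for the $\widetilde{C}/C$-type sorting condition, Lemma~\ref{l:bdoldet} handles $\widetilde{B}/D$, and containment of quotients disposes of the remaining cases. The only differences are cosmetic: you spell out the monotonicity of the full base window that the paper dismisses as clear, and you fold $\widetilde{B}/B$ into $\widetilde{C}/C$ by containment rather than treating it directly.
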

\begin{proof}
This follows from the observation that the sorting conditions completely
determine the ordering of elements in the base window.  Considering the third
column of Table~\ref{t:mzp_def}, this is clear for $\widetilde{C}/C$ and
$\widetilde{B}/B$.  By Lemma~\ref{l:bdoldet}, we see that this holds for
$\widetilde{B}/D$ as well.  Since $\widetilde{D}_n/D_n \subset
\widetilde{B}_n/D_n$, it holds for $\widetilde{D}/D$.
\end{proof}

\bigskip
\section{Abacus diagrams}

\subsection{Definitions}

We now combinatorialize the set of integers that can appear in the base window
of a mirrored $\mathbb{Z}$-permutation as an abacus diagram.  These diagrams
enforce precisely the conditions from Lemma~\ref{l:olwd}.

\begin{definition}
An {\bf abacus diagram} (or simply {\bf abacus}) is a diagram containing $2n$
columns labeled $1, 2, \ldots, 2n$, called {\bf runners}.  Runner $i$ contains
entries labeled by the integers $m N + i$ for each {\bf level} $m$ where
$-\infty < m < \infty$.

We draw the abacus so that each runner is vertical, oriented with $-\infty$ at
the top and $\infty$ at the bottom, with runner $1$ in the leftmost position,
increasing to runner $2n$ in the rightmost position.  Entries in the abacus
diagram may be circled; such circled elements are called {\bf beads}.  Entries
that are not circled are called {\bf gaps}.  The linear ordering of the entries
given by the labels $m N + i$ (for level $m \in \Z$ and runner $1 \leq i \leq
2n$) is called the {\bf reading order} of the abacus which corresponds to
scanning left to right, top to bottom.  (Observe that there are no entries in
the abacus having labels $\{m N : m \in \mathbb{Z}\}$.)

We say that a bead $b$ is {\bf active} if there exist gaps (on any runner) that
occur prior to $b$ in reading order.  Otherwise, we say that the bead is {\bf
inactive}.  A runner is called {\bf flush} if no bead on the runner is preceded
in reading order by a gap on that same runner.  We say that an abacus is {\bf
flush} if every runner is flush.  We say that an abacus is {\bf balanced} if 
\begin{itemize}
    \item there is at least one bead on every runner $i$ for $1 \leq i \leq 2n$, and 
    \item the sum of the labels of the lowest beads on runners $i$ and $N-i$ is
        $N$ for all $i = 1, 2, \ldots, 2n$.
\end{itemize}

We say that an abacus is {\bf even} if there exists an even number of gaps
preceding $N$ in reading order.
\end{definition}

\begin{definition}\label{def:abacus}
Given a mirrored $\mathbb{Z}$-permutation $w$, we define $\Fa(w)$ to be the
flush abacus whose lowest bead in each runner is an element of $\{ w(1),
w(2), \ldots, w(2n) \}$.
\end{definition}

Note that this is well-defined by Lemma~\ref{l:olwd}.  Also, $\Fa(w)$ is always
balanced by Lemma~\ref{l:ol_balance}, so the level of the lowest bead on runner
$i$ is the negative of the level of the lowest bead on runner $N-i$.  In the
rest of the paper, we will implicitly assume that all abaci are balanced and
flush unless otherwise noted.

\begin{example}
For the minimal length coset representative $w \in \tld{C}_3/C_3$ whose base
window is \\ $[-11,-9,-1,8,16,18]$, the balanced flush abacus $a=\Fa(w)$ is given in
Figure~\ref{fig:abacus}.  
\begin{figure}[!h]
\epsfig{figure=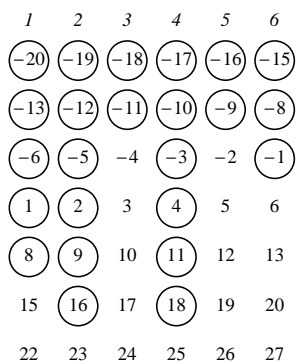, height=2in}
\caption{The balanced flush abacus diagram $a=\Fa(w)$ for the minimal length coset representative $w=[-11,-9,-1,8,16,18] \in \tld{C}_3/C_3$.  The circled entries are the beads; the noncircled entries are the gaps.}
\label{fig:abacus}
\end{figure}
\label{example}
\end{example}

We record some structural facts about balanced flush abaci to be used later.

\begin{lemma}\label{l:ab_balance}
For each $i \in \mathbb{Z}$, we have that entry $N+i$ is a gap if and
only if entry $N-i$ is a bead.
\end{lemma}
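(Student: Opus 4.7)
The plan is to translate the flush and balance properties of the abacus into inequalities on levels, then verify the claimed equivalence by one short chain of iff's.

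First I would reduce to the nontrivial case. If $i \equiv 0 \pmod N$ then both $N+i$ and $N-i$ are multiples of $N$, which are not entries of the abacus, so the statement is vacuous. Otherwise, write $i = kN + r$ with $k \in \Z$ and $1 \le r \le 2n$. Then
\[
N+i = (k+1)N + r, \qquad N-i = -kN + (N-r),
\]
so $N+i$ sits on runner $r$ at level $k+1$, while $N-i$ sits on runner $N-r$ at level $-k$.

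Next let $m_r$ denote the level of the lowest bead on runner $r$. Flushness gives that the position on runner $r$ at level $\ell$ is a bead iff $\ell \le m_r$, and a gap iff $\ell > m_r$. Balance says the labels $m_r N + r$ and $m_{N-r} N + (N-r)$ of the lowest beads on runners $r$ and $N-r$ sum to $N$, which rearranges to $m_{N-r} = -m_r$. Combining these two facts, entry $N+i$ is a gap iff $k+1 > m_r$, iff $k \ge m_r$, iff $-k \le -m_r = m_{N-r}$, iff entry $N-i$ is a bead.

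I do not anticipate any real obstacle; the only step requiring care is the coordinate bookkeeping, in particular confirming that $N-i$ lies at level $-k$ (rather than $-k+1$) on runner $N-r$, which is forced by the choice of residue $1 \le r \le N-1$. Once that is pinned down, the chain of iff's in the previous paragraph finishes the proof.
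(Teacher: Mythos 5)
Your proof is correct and fills in, with explicit level bookkeeping, exactly the computation the paper leaves implicit when it says the lemma "follows from the definition together with the Balance Lemma": flushness gives the bead/gap dichotomy in terms of the lowest-bead levels $m_r$, and the balanced condition gives $m_{N-r}=-m_r$, which is the abacus form of the Balance Lemma. The coordinate check that $N-i$ lies at level $-k$ on runner $N-r$ is handled correctly, so nothing is missing.
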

\begin{proof}
This follows from the definition together with the Balance
Lemma~\ref{l:ol_balance}.
\end{proof}

\begin{lemma}\label{l:balance_up_low}
Fix an abacus $a$ and consider a single row $r$ of $a$.  If there exists $i$
such that the entries in columns $i$ and $N-i$ are both beads, then the level
of row $r$ is $\leq 0$.  Similarly, if there exists $i$ such that the entries
in columns $i$ and $N-i$ are both gaps, then the level of row $r$ is $> 0$.  In
particular, we cannot have both of these conditions holding at the same time
for a given row of $a$.
\end{lemma}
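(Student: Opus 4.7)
The plan is to reduce everything to a single parameter per runner, namely the level of the lowest bead, and then read off both conclusions from the balance identity. Concretely, for each runner $i \in \{1, \ldots, 2n\}$, let $m_i$ denote the level of the lowest bead on runner $i$. Because the abacus is flush, no bead on runner $i$ is preceded in reading order by a gap on the same runner, so the set of beads on runner $i$ is exactly the set of entries at levels $\leq m_i$, and the set of gaps is exactly the set of entries at levels $> m_i$. In other words, the predicate ``the entry in row $m$, column $i$ is a bead'' is equivalent to $m \leq m_i$.

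Next I would extract the relationship between $m_i$ and $m_{N-i}$ from the balance condition. Since the label of the lowest bead on runner $i$ is $m_i N + i$ and on runner $N-i$ is $m_{N-i} N + (N-i)$, the requirement that these labels sum to $N$ gives $(m_i + m_{N-i} + 1) N = N$, so $m_{N-i} = -m_i$. This is the only arithmetic input needed.

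With these two ingredients, both halves of the lemma reduce to trivial inequalities. If the entries at $(r, i)$ and $(r, N-i)$ are both beads, then $r \leq m_i$ and $r \leq m_{N-i} = -m_i$, hence $r \leq -|m_i| \leq 0$. Dually, if both entries are gaps, then $r > m_i$ and $r > -m_i$, so $r > |m_i| \geq 0$, which forces $r \geq 1 > 0$. The final sentence of the statement is then immediate, since the two derived conclusions $r \leq 0$ and $r > 0$ are logically incompatible.

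The only step that requires any care is setting up the flushness dictionary correctly with respect to the convention that $-\infty$ is at the top and $+\infty$ at the bottom of the abacus, so that ``lowest bead'' means largest-level bead and flushness makes levels $\leq m_i$ the bead levels rather than the gap levels. Once that orientation is pinned down, there is no real obstacle; the balance identity $m_{N-i} = -m_i$ does all the work.
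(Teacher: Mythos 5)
Your proof is correct and is exactly the argument the paper leaves implicit: the paper's proof is the single line ``This follows from the Balance Lemma,'' and your write-up supplies the two ingredients that line is gesturing at, namely that flushness makes ``bead at level $m$ on runner $i$'' equivalent to $m \leq m_i$, and that balance forces $m_{N-i} = -m_i$. No gaps; the orientation caveat you flag at the end is handled correctly.
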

\begin{proof}
This follows from the Balance Lemma~\ref{l:ol_balance}.
\end{proof}

\begin{table}
\begin{tabular}{|p{0.3in}|p{2.0in}|}
\hline
Type & Conditions on abaci \\
\hline
$\widetilde{C}/C$ & balanced flush abaci \\
\hline
$\widetilde{B}/B$ & even balanced flush abaci \\
\hline
$\widetilde{B}/D$ & balanced flush abaci \\
\hline
$\widetilde{D}/D$ & even balanced flush abaci \\
\hline
\end{tabular}
\caption{Abaci for $\widetilde{W}/W$}\label{t:to_abaci}
\end{table}

\begin{lemma}\label{l:ab_bij}
For each of $\widetilde{C}/C$, $\widetilde{B}/B$, $\widetilde{B}/D$ and
$\widetilde{D}/D$, the map $\Fa$ is a bijection from $\widetilde{W}/W$ to the
set of abaci shown in column 2 of Table~\ref{t:to_abaci}.
\end{lemma}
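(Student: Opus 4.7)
The plan is to verify, case by case, that $\Fa$ is well defined into the set in column~2 of Table~\ref{t:to_abaci} and then to construct an explicit two-sided inverse. Injectivity will be essentially free.

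\textbf{Injectivity.} If $\Fa(w)=\Fa(w')$, the two abaci have the same set of lowest beads, so $\{w(1),\ldots,w(2n)\}=\{w'(1),\ldots,w'(2n)\}$ as unordered sets, and Corollary~\ref{c:unbw} forces $w=w'$.

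\textbf{Well-definedness.} Lemma~\ref{l:olwd} gives the base window entries distinct nonzero residues mod $N$, so placing each as the lowest bead of its runner and filling above yields a flush abacus with exactly one lowest bead per runner. The Balance Lemma~\ref{l:ol_balance} translates directly into the balance condition on the abacus. For $\widetilde{B}/B$ and $\widetilde{D}/D$ I must further match parities: using the reformulation of the column-2 condition given in Table~\ref{t:mzp_def}, I would show that $|\{j>0:w(j)<0\}|$ is even if and only if the abacus is even. Writing $j=kN+j_0$ with $k\geq 0$ and $1\leq j_0\leq 2n$, equation~\eqref{e:8.42} gives $w(j)=w(j_0)+kN$; letting $w(j_0)=m_i N+i$ identify the runner $i$ and level $m_i$ of that lowest bead, the count splits into $\sum_{i=1}^{2n}\max(0,-m_i)$. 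On the other hand, on runner $i$ the entries with label strictly less than $N$ sit at levels $\leq 0$, so the gaps preceding $N$ on that runner number exactly $\max(0,-m_i)$. The two totals agree, so the parities coincide.

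\textbf{Surjectivity.} Given a balanced flush abacus $a$ (even, in the cases where this is required), let $L=\{\ell_1,\ldots,\ell_{2n}\}$ be its set of lowest beads, one per runner. For $\widetilde{C}/C$ and $\widetilde{B}/B$, I sort $L$ increasingly and take the result as the base window; the balance of $a$ is precisely hypothesis~\eqref{e:local_balance}, so the converse half of the Balance Lemma together with~\eqref{e:8.42} extends this data to a mirrored $\mathbb{Z}$-permutation $w$. The sorted order supplies the Table~\ref{t:mzp_def} sorting condition, and the parity calculation of the previous paragraph applied in reverse puts $w$ in $\widetilde{B}_n$ when the abacus is even. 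For $\widetilde{B}/D$ and $\widetilde{D}/D$, I instead invoke Lemma~\ref{l:bdoldet}, with $a_i$ equal to the element of $L$ of residue $i \bmod N$: it produces the unique base window satisfying both the $\widetilde{B}/D$ sorting condition and the ``mid-window'' $\widetilde{B}_n$-membership parity $|\{i\leq n:w(i)\geq n+1\}|\equiv 0\bmod 2$. In the $\widetilde{D}/D$ case the additional sign parity needed to land in $\widetilde{D}_n$ is the abacus evenness, again by the computation above. In every case $\Fa$ of the resulting coset representative returns $a$ because the set of lowest beads is preserved.

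The main obstacle is bookkeeping the two separate parity conditions active in the $\widetilde{B}$ and $\widetilde{D}$ groups: the sign parity, which corresponds to abacus evenness, and the mid-window parity specific to $\widetilde{B}_n$, which is handled cleanly by Lemma~\ref{l:bdoldet}. Once that dictionary is pinned down in the well-definedness step, the four cases reduce to the same underlying correspondence between flush abaci and sorted base windows.
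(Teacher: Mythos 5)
Your proposal is correct and follows essentially the same route as the paper: injectivity via Corollary~\ref{c:unbw}, well-definedness and surjectivity via Lemma~\ref{l:olwd} and the Balance Lemma~\ref{l:ol_balance}, the identification of the evenness of the abacus with the sign-parity condition, and Lemma~\ref{l:bdoldet} to handle the mid-window parity in types $\widetilde{B}/D$ and $\widetilde{D}/D$. Your parity verification by summing $\max(0,-m_i)$ over runners is just a more explicit rendering of the paper's observation that the gaps preceding $N$ are in bijection with the beads succeeding $N$, so no new idea is needed and no gap remains.
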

\begin{proof}
This follows from Corollary~\ref{c:unbw} and Lemma~\ref{l:olwd} for
$\widetilde{C}/C$.  In types $\widetilde{B}/B$ and $\widetilde{D}/D$, the
condition
\[ \left|i \in \Z : i \leq 0, w(i) \geq 1\right| \equiv 0 \mod 2 \]
is equivalent to the even condition on abaci.  
To see this, recall that the entries in the base window of a mirrored
permutation consist of the labels of the lowest beads in each runner of the
abacus.  Therefore, the positive entries of a mirrored permutation that appear
to the left of the base window correspond to the beads lying directly above
some bead in the abacus that lies to the right of $N$ in reading order.  The
set of beads in the abacus succeeding $N$ in reading order has the same
cardinality as the set of gaps preceding $N$ in reading order by
Lemma~\ref{l:ab_balance}.

As explained in Lemma~\ref{l:bdoldet}, the condition 
\[ \left|i \in \Z : i \leq n, w(i) \geq n+1\right| \equiv 0 \mod 2 \]
in type $\widetilde{B}/D$ only changes the ordering of the sorted entries in
the base window, not the set of entries themselves.

The result then follows from Corollary~\ref{c:unbw}.
\end{proof}

\subsection{Action of $\widetilde{W}$ on the abacus}\label{s:ab_action}

If we translate the action of the Coxeter generators on the mirrored
$\mathbb{Z}$-permutations through the bijection $\Fa$, we find that 
\begin{itemize}
    \item $s_i$ interchanges column $i$ with column $i+1$ and
            interchanges column $2n-i$ with column $2n-i+1$, for $1 \leq i \leq n-1$
    \item $s_0^C$ interchanges column $1$ and $2n$, and then shifts the lowest
        bead on column $1$ down one level towards $\infty$, and shifts the
        lowest bead on column $2n$ up one level towards $-\infty$
    \item $s_0^D$ interchanges columns $1$ and $2$ with columns $2n-1$
        and $2n$, respectively, and then shifts the lowest beads on columns $1$
        and $2$ down one level each towards $\infty$, and shifts the lowest
        beads on columns $2n-1$ and $2n$ up one level each towards $-\infty$
    \item $s_n^C$ interchanges column $n$ with column $n+1$ 
    \item $s_n^D$ interchanges columns $n-1$ and $n$ with columns $n+1$
        and $n+2$, respectively.
\end{itemize}

\bigskip
\section{Root lattice points}

\subsection{Definitions}

Following \cite[Section 4]{humphreys}, let $\{ e_1, e_2, \dots, e_{n} \}$ be an
orthonormal basis of the Euclidean space $V = \mathbb{R}^{n}$ and denote the
corresponding inner product by $\langle\cdot, \cdot\rangle$.  Define the {\bf simple roots}
$\alpha_i$ and the {\bf longest root} $\widetilde{\alpha}$ for each
type ${W}_n \in \{{B}_n, {C}_n, {D}_n\}$ as in \cite[page 42]{humphreys}.
The $\mathbb{Z}$-span $\Lambda_R$ of the simple roots is called the {\bf root
lattice}, and we may identify $V$ with $\mathbb{R} \otimes_{\mathbb{Z}} \Lambda_R$
because the simple roots in types $B_n$, $C_n$ and $D_n$ are linearly independent.  

There is an action of $\widetilde{W}$ on $V$ in which $s_i$ is the reflection
across the hyperplane perpendicular to $\alpha_i$ for $i = 1, 2, \ldots, n$ and
$s_0$ is the affine reflection
\[ s_0(v) = v - ( \langle v,\widetilde{\alpha} \rangle - 1 ) \frac{2}{\langle\widetilde{\alpha},
\widetilde{\alpha} \rangle}\widetilde{\alpha}. \]

Suppose $w$ is a minimal length coset representative in $\widetilde{W}/W$ and
define the {\bf root lattice coordinate} of $w$ to be the result of acting on $0
\in V$ by $w$.  

\begin{theorem}
The root lattice coordinate of an element $w \in \widetilde{W}/W$ is
\[ \sum_{i=1}^n \mathsf{level}_i(\Fa(w)) e_i \] 
where $\mathsf{level}_i(\Fa(w))$ denotes the level of the lowest bead in column
$i$ of the abacus $\Fa(w)$.  Moreover, this is a bijection to the collections
of root lattice coordinates shown in Table~\ref{t:to_rlc}.
\end{theorem}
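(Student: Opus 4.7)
The plan is to establish the formula by induction on Coxeter length and then derive the bijection from Lemma~\ref{l:ab_bij}.  Since the finite Weyl group $W$ is generated by reflections through hyperplanes containing $0 \in V$, every $u \in W$ satisfies $u(0) = 0$, so the assignment $\widetilde{w} \mapsto \widetilde{w}(0)$ is constant on right cosets $\widetilde{w} W$.  Moreover, the right-hand side $\sum_{i=1}^n \mathsf{level}_i(\Fa(\widetilde{w})) e_i$ makes sense for any $\widetilde{w} \in \widetilde{W}$, since Definition~\ref{def:abacus} depends only on the unordered set $\{\widetilde{w}(1), \ldots, \widetilde{w}(2n)\}$.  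I will therefore prove the stronger statement that the formula holds for every $\widetilde{w} \in \widetilde{W}$, and then specialize to minimal length coset representatives.

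The base case $\widetilde{w} = e$ is immediate: the base window is $[1, 2, \ldots, 2n]$, every level vanishes, and $e(0) = 0$.  For the inductive step, pick a reduced expression and write $\widetilde{w} = s_i \widetilde{w}'$ with $\ell(\widetilde{w}') = \ell(\widetilde{w}) - 1$; then $\widetilde{w}(0) = s_i\bigl(\widetilde{w}'(0)\bigr)$, while $\Fa(\widetilde{w})$ is the image of $\Fa(\widetilde{w}')$ under the left action of $s_i$ catalogued in Section~\ref{s:ab_action}.  It therefore suffices to check, one generator at a time, that the induced transformation of $(\mathsf{level}_1, \ldots, \mathsf{level}_n)$ agrees with the linear/affine action of $s_i$ on the coefficients of $\sum \ell_j e_j$.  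Throughout this verification I will use the balance identity $\mathsf{level}_i + \mathsf{level}_{N-i} = 0$ that follows from the Balance Lemma~\ref{l:ol_balance}.

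Concretely, for $s_i$ with $1 \le i \le n-1$, the swap of columns $i$ and $i+1$ swaps the levels $\mathsf{level}_i$ and $\mathsf{level}_{i+1}$, matching $e_i \leftrightarrow e_{i+1}$.  For $s_n^C$, the swap of columns $n$ and $n+1$ together with balance sends $\mathsf{level}_n \mapsto -\mathsf{level}_n$, matching $e_n \mapsto -e_n$.  For $s_n^D$, the paired swaps together with balance send $\mathsf{level}_{n-1} \mapsto -\mathsf{level}_n$ and $\mathsf{level}_n \mapsto -\mathsf{level}_{n-1}$, which is the reflection in $e_{n-1}+e_n$.  For $s_0^C$, the swap of columns $1$ and $2n$ combined with the indicated $\pm 1$ level shift produces $\mathsf{level}_1 \mapsto 1 - \mathsf{level}_1$, which is the first-coordinate effect of the affine reflection $v \mapsto v - (\langle v, 2e_1\rangle - 1)e_1$.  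The case $s_0^D$ is analogous and reproduces the affine reflection in the hyperplane perpendicular to $e_1 + e_2$.

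The bijection statement then follows quickly.  By Lemma~\ref{l:ab_bij}, $\Fa$ is a bijection from $\widetilde{W}/W$ onto the abaci listed in Table~\ref{t:to_abaci}; each such abacus is recovered from its lowest-bead levels on runners $1, \ldots, n$ via flushness and balance; and the formula expresses the root lattice coordinate as an explicit function of precisely this data.  The image in the root lattice is then the subset described in Table~\ref{t:to_rlc}, with the parity conditions in types $\widetilde{B}/B$ and $\widetilde{D}/D$ corresponding to the even condition on abaci.  The main technical obstacle will be matching the $\pm 1$ level shifts in the left action of $s_0^C$ and $s_0^D$ on the abacus with the translation components of the affine reflection formulas; this is where balance plays its essential role and where careful bookkeeping is required.
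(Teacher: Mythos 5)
Your proposal is correct and follows essentially the same route as the paper: a generator-by-generator check that the left action on abacus levels agrees with the (affine) reflection action on $V$, organized as an induction on Coxeter length starting from the identity, followed by an appeal to Lemma~\ref{l:ab_bij} for the bijection onto Table~\ref{t:to_rlc}. The only point the paper makes explicit that you leave as a bare assertion is why the even condition on abaci matches the parity of $\sum_i |a_i|$, namely that the number of gaps preceding $N$ equals the number of beads succeeding $N$, which is $\sum_{i=1}^n |\mathsf{level}_i(\Fa(w))|$.
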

\begin{proof}
Once we identify the Coxeter graphs from Table~\ref{t:mzp_def} with those in
\cite{humphreys}, it is straightforward to verify that the action of
$\widetilde{W}$ on the root lattice is the same as the action of
$\widetilde{W}$ on the levels of the abacus given in Section~\ref{s:ab_action}.

For example, in $\widetilde{B}_n/B_n$, we have $\alpha_n = e_n$ so
\[ s_{n} (a_1 e_1 + \cdots + a_n e_n) = (a_1 e_1 + \cdots + a_n e_n) - (a_n - 0) \frac{2}{\langle \alpha_n, \alpha_n \rangle} \alpha_n \]
\[ = a_1 e_1 + \cdots + a_{n-1} e_{n-1} - a_n e_n \]
and this corresponds to interchanging columns $n$ and $n+1$ in the abacus by the
Balance Lemma~\ref{l:ol_balance}.  Similarly, reflection through the
hyperplane orthogonal to a root of the form $e_{n-1} + e_n$ corresponds to
interchanging columns $n-1$ and $n$ with columns $n+1$ and $n+2$, respectively.
The generators $s_0^C$ and $s_0^D$ are affine
reflections, so we need to shift the level by 1 as described in
Section~\ref{s:ab_action}.  For example, in $\widetilde{C}_n/C_n$, we have
$\widetilde{\alpha} = 2e_1$ so
\[ s_{0} (a_1 e_1 + \cdots + a_n e_n) = (a_1 e_1 + \cdots + a_n e_n) - (2a_1 - 1) \frac{2}{\langle \widetilde{\alpha}, \widetilde{\alpha} \rangle} \widetilde{\alpha} \]
\[ = (-a_1+1) e_1 + a_2 e_2 \cdots + a_n e_n. \]

Because the number of beads to the right of $N$ in an abacus is $\sum_{i=1}^n |
\mathsf{level}_i(\Fa(w))|$, it follows from the proof of Lemma~\ref{l:ab_bij}
that the correspondence between abaci and root lattice coordinates is a
bijection to the images shown in Table~\ref{t:to_rlc}.
\end{proof}

\begin{table}[h]
\begin{tabular}{|p{0.3in}|p{3.0in}|}
\hline
Type & Set of root lattice coordinates \\
\hline
$\widetilde{C}/C$ & $(a_1, \ldots, a_n) \in \mathbb{Z}^n$ \\
\hline
$\widetilde{B}/B$ & $(a_1, \ldots, a_n) \in \mathbb{Z}^n$ such that $\sum_{i=1}^n |a_i|$ is even. \\
\hline
$\widetilde{B}/D$ & $(a_1, \ldots, a_n) \in \mathbb{Z}^n$ \\
\hline
$\widetilde{D}/D$ & $(a_1, \ldots, a_n) \in \mathbb{Z}^n$ such that $\sum_{i=1}^n |a_i|$ is even. \\
\hline
\end{tabular}
\caption{Root lattice points for $\widetilde{W}/W$}\label{t:to_rlc}
\end{table}

\begin{example}
For the minimal length coset representative $w=[-11,-9,-1,8,16,18] \in \tld{C}_3/C_3$, the root lattice coordinates $e_1+2e_2-2e_3$ can be read directly from the levels of the lowest beads in the first three runners of the abacus in Figure~\ref{fig:abacus}. 
\end{example}

Shi \cite{Shi-presentations} has worked out further details about the
relationship between root system geometry and mirrored
$\mathbb{Z}$-permutations.

\bigskip
\section{Core partitions}

\subsection{Definitions}

A {\bf partition} is a sequence $\lambda_1 \geq \lambda_2 \geq \cdots \geq
\lambda_k \geq 0$ of weakly decreasing integers.  Each partition has an
associated {\bf diagram} in which we place $\lambda_i$ unit boxes on the $i$-th
row of the diagram, where the first row is drawn at the top of the diagram.
The {\bf hook length} of a box $B$ in $\lambda$ is the sum of the number of boxes
lying to the right of $B$ in the same row and the number of boxes lying below
$B$ in the same column, including $B$ itself.  
The {\bf main diagonal} of a partition diagram is the set of all boxes 
with position coordinates $(i,i)\in \N^2$; for non-zero integers $j$, the 
{\bf $j$-th diagonal}
 of a partition diagram is the set of all boxes with position 
coordinates $(i,i+j)\in \N^2$.  We use the notation $p \p q$ to denote the 
integer in $\{0, 1, \ldots, q-1\}$ that is equal to $p$ mod $q$.

\begin{definition}
We say that a partition $\lambda$ is a {\bf$(2n)$-core} if it is impossible to
remove $2n$ consecutive boxes from the southeast boundary of the partition
diagram in such a way that the result is still a partition diagram.
Equivalently, $\lambda$ is a $(2n)$-core if no box in $\lambda$ has a hook
length that is divisible by $2n$.  We say that a partition $\lambda$ is {\bf
symmetric} if the length of the $i$-th row of $\lambda$ is equal to the length
of the $i$-th column of $\lambda$, for all $i$.  We say that a partition
$\lambda$ is {\bf even} if there are an even number of boxes on the main
diagonal of $\lambda$.
\end{definition}

Every abacus diagram $a$ determines a partition $\lambda$, as follows. 

\begin{definition}\label{def:Fc}
Given an abacus $a$, create a partition $\Fc(a)$ whose southeast boundary is
the lattice path obtained by reading the entries of the abacus in reading order
and recording a north-step for each bead, and recording an east-step for each
gap.

If we suppose that there are $M$ active beads in $a$, then $\Fc(a)$ can be
equivalently described as the partition whose $i$-th row contains the same number
of boxes as gaps that appear before the $(M-i+1)$st active bead in reading
order.  In this way each box in the partition corresponds to a unique bead-gap
pair from the abacus in which the gap occurs before the active bead in reading
order.
\end{definition}

For an active bead $b$ in an abacus $a$, we define the {\bf symmetric gap}
$g(b)$ to be the gap in position $2N-b$ that exists by
Lemma~\ref{l:ab_balance}.  Then, for $b > N$ the bead-gap pair $(b, g(b))$ in $a$
corresponds to the box on the main diagonal on the row of $\Fc(a)$
corresponding to $b$.

\begin{example}
For the minimal length coset representative $w \in \tld{C}_3/C_3$ whose base
window is \\ $[-11,-9,-1,8,16,18]$, 
 the partition $\lam=\Fc(w)=(10,9,6,5,5,3,2,2,2,1)$ is given in Figure~\ref{fig:coreex}.  To find this partition from the abacus diagram in Figure~\ref{fig:abacus}, follow the abacus in reading order, recording a horizontal step for every gap (starting with the gap in position $-4$) and a vertical step for every bead (ending with the bead in position $18$).
\begin{figure}[h]
\begin{center}
\epsfig{figure=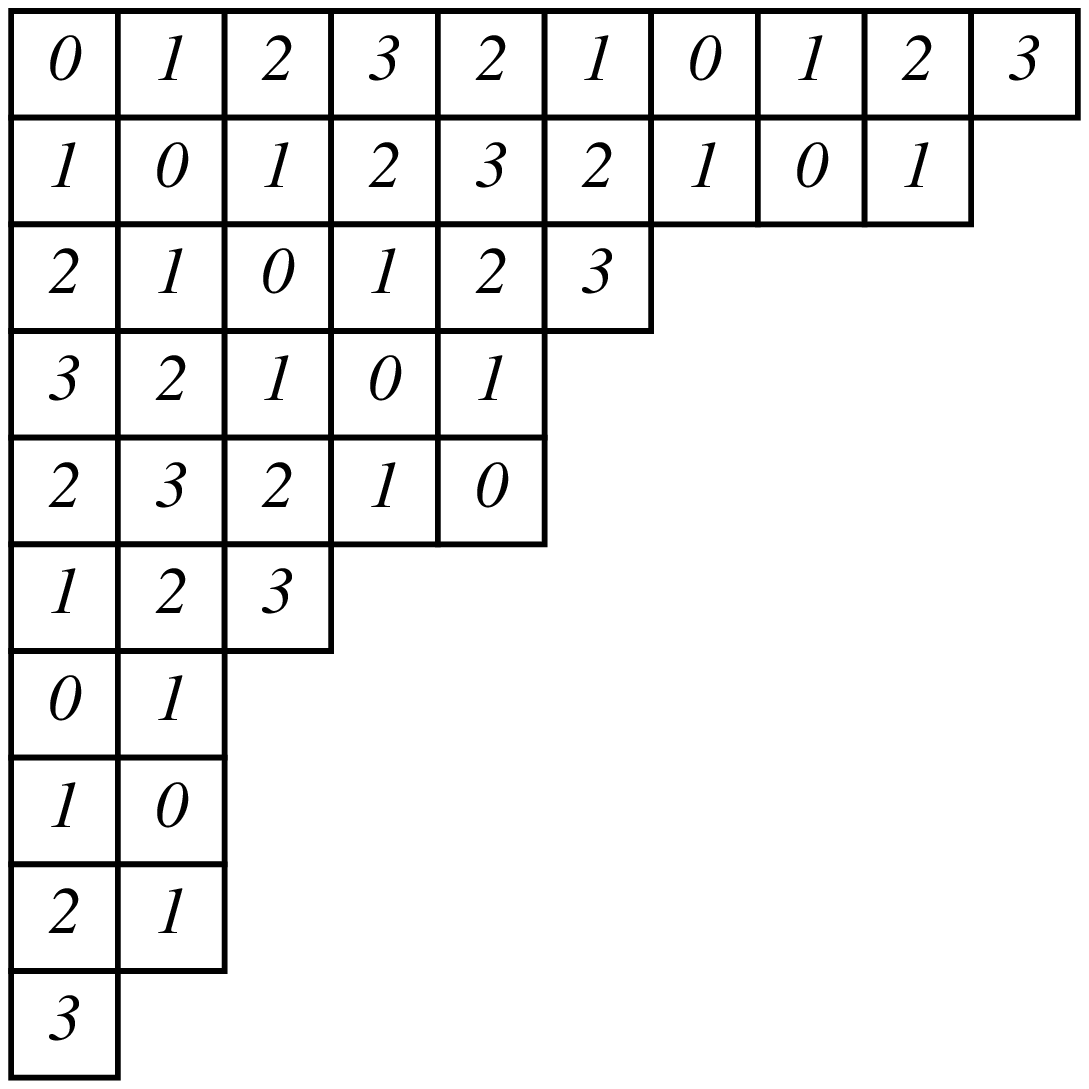, height=1.8 in}
\end{center}
\caption{The $6$-core partition $\lam=\Fc(w)=(10,9,6,5,5,3,2,2,2,1)$ for the minimal length coset representative $w=[-11,-9,-1,8,16,18]\in \tld{C}_3/C_3$.  The numbers inside the boxes are the residues, described below.}
\label{fig:coreex}
\end{figure}
\label{ex:core}
\end{example}

\begin{proposition}\label{p:bfabaci_to_cores}
The map $\Fc: \{\textup{abaci}\} \rightarrow \{\textup{partitions}\}$ 
is a bijection from and onto the sets shown in Table~\ref{t:main_results} on Page~\pageref{t:main_results}.
\end{proposition}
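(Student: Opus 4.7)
The plan is to decompose the claim into three structural correspondences, each isolating one feature of $\Fc$, and then assemble them with a straightforward inverse construction. I would establish: (i) flushness of $a$ corresponds to the $(2n)$-core condition on $\Fc(a)$, (ii) balance of $a$ corresponds to the symmetry of $\Fc(a)$, and (iii) evenness of $a$ corresponds to evenness of $\Fc(a)$.

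For (i), I would argue as in the classical James construction. A box of hook length $2n$ in $\Fc(a)$ corresponds to a bead-gap pair whose reading-order positions differ by exactly $2n = N-1$; more precisely, a hook of length divisible by $2n$ arises iff some bead $b$ in $a$ has a gap lying $kN$ positions before it, i.e., iff some runner contains a bead with a gap above it on the same runner. The flush condition rules this out, giving the $(2n)$-core property; conversely, if $\Fc(a)$ is a $(2n)$-core then no such pair exists, forcing each runner to be flush.

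For (ii), the Balance Lemma~\ref{l:ol_balance}, reinterpreted through Lemma~\ref{l:ab_balance}, says that the involution $i \mapsto 2N - i$ on reading-order positions exchanges beads with gaps. Geometrically, this involution fixes the midpoint of the reading order (namely the position of the omitted entry $N$) and corresponds exactly to the transposition of the lattice-path boundary of $\Fc(a)$ about the main diagonal. I would check that under $\Fc$ the midpoint $N$ lands on the main diagonal vertex of the boundary, so that bead-gap pairs $(b,g)$ with $b$ after $g$ biject with their mirror images $(2N-g, 2N-b)$, which is precisely the condition that the partition equal its transpose. For (iii), I would then observe that the boxes of $\Fc(a)$ on the main diagonal are exactly those arising from bead-gap pairs $(b,g(b))$ with $b>N$ and $g(b)<N$. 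By Lemma~\ref{l:ab_balance}, the number of beads strictly after $N$ equals the number of gaps strictly before $N$, so the number of main-diagonal boxes of $\Fc(a)$ equals the number of gaps preceding $N$, giving the parity match.

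Bijectivity follows by constructing the inverse. Given a partition $\lambda$ of the prescribed type, read its southeast boundary as an infinite lattice path, record a bead for each north step and a gap for each east step, and distribute these onto the $2n$ runners in reading order, with the starting vertex aligned so that the main-diagonal vertex of $\lambda$ lies at position $N$. The $(2n)$-core hypothesis forces flushness on every runner; symmetry of $\lambda$ forces the balance condition via the $i \mapsto 2N-i$ involution; evenness of $\lambda$ forces evenness of the abacus by the count in (iii). Composing with $\Fc$ yields the identity on both sides, so the four maps are bijections onto the sets in the corresponding rows of Table~\ref{t:main_results}. The most delicate point I anticipate is the bookkeeping in step (ii): verifying precisely that the omitted position $N$ of the reading order sits on the main diagonal vertex of the boundary path, so that the abacus involution and the partition transposition align without an off-by-one shift; everything else is a direct translation of the definitions.
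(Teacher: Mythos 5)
Your proposal is correct and takes essentially the same route as the paper's proof: flushness corresponding to the $(2n)$-core condition, balance corresponding to symmetry via the reflection of the reading order about the omitted position $N$, evenness corresponding to the parity of the main diagonal, and an inverse built by encoding the boundary lattice path with its midpoint placed between entries $N-1$ and $N+1$. The paper's argument is simply a terser version of the same decomposition, including the same count identifying gaps preceding $N$ with main-diagonal boxes.
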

\begin{proof}
A balanced flush abacus $a$ determines a partition $\Fc(a)$ by
Definition~\ref{def:Fc}.  The fact that $a$ is flush by construction implies
that $\Fc(a)$ is a $(2n)$-core.  By Lemma~\ref{l:ab_balance}, the sequence of
gaps and beads is inverted when reflected about position $N$, so $\Fc(a)$ is
symmetric.

We can define an inverse map.  Starting from a $(2n)$-core partition $\lambda$,
encode its southeast boundary lattice path on the abacus by recording each
north-step as a bead and each east-step as a gap, placing the midpoint of the
lattice path from $\lambda$ to lie between entries $N-1$ and $N+1$ on the
abacus.  The resulting abacus will be flush because $\lambda$ is a $(2n)$-core.
The resulting abacus will be balanced because whenever position $i$ is the
lowest bead on runner $(i \p N)$, then by symmetry $2N-i$ is the highest gap on
runner $(N-i \p N)$ so $N-i$ is the lowest bead on runner $(N-i \p N)$.

Moreover, we claim that the map $\Fc$ restricts to a bijection between even
abaci and even core partitions.  In the correspondence between abaci and
partitions, the entry $N$ corresponds to the midpoint of the boundary lattice
path.  In particular, this entry lies at the corner of a box on the main
diagonal.  Therefore, the number of gaps preceding $N$ in the reading
order of $a$ is equal to the number of horizontal steps lying below the main
diagonal of $\lambda = \Fc(a)$, which is exactly the number of boxes
contained on the main diagonal of $\lambda$.
\end{proof}

\subsection{Residues for the action of $\widetilde{W}$}

If we translate the action of the Coxeter generators on abaci through the
bijection $\Fc$, we obtain an action of $\widetilde{W}$ on the symmetric
$(2n)$-core partitions.

To describe this action, we introduce the notion of a {\bf residue} for a box in
the diagram of a symmetric $(2n)$-core partition.  The idea that motivates the
following definitions is that $s_i$ should act on a symmetric $(2n)$-core
$\lambda$ by adding or removing all boxes with residue $i$.  In contrast with
the situation in types $\widetilde{A}$ and $\widetilde{C}$, it will turn out
that for types $\widetilde{B}$ and $\widetilde{D}$ the residue of a box in
$\lambda$ may depend on $\lambda$ and not merely on the coordinates of the box.

To begin, we orient $\N^2$ so that $(i,j)$ corresponds to row $i$ and column $j$
of a partition diagram, and define the {\bf fixed residue} of a position in
$\N^2$ to be
\[ \res(i,j) = \begin{cases}
    (j-i) \p (2n) & \text{ if } 0 \leq (j-i) \p (2n) \leq n \\
    2n - \big( (j-i) \p (2n) \big) & \text{ if } n < (j-i) \p (2n) < 2n. \\
\end{cases} \]
Then, the fixed residues are given by extending the pattern illustrated below.
\[ \myscalebox{0.5}{ \tightpartition{
\nn{0} & \nn{1} & \nn{2} & \nn{\cdots} & \nn{n-1} & \nn{n} & \nn{n-1} & \nn{\cdots} & \nn{2} & \nn{1} & \nn{0} & \nn{1} & \nn{2} \\
\nn{1} & \nn{0} & \nn{1} & \nn{2} & \nn{\cdots} & \nn{n-1} & \nn{n} & \nn{n-1} & \nn{\cdots} & \nn{2} & \nn{1} & \nn{0} & \nn{1} \\
\nn{2} & \nn{1} & \nn{0} & \nn{1} & \nn{2} & \nn{\cdots} & \nn{n-1} & \nn{n} & \nn{n-1} & \nn{\cdots} & \nn{2} & \nn{1} & \nn{0} \\
\nn{\vdots} & \nn{2} & \nn{1} & \nn{0} & \nn{1} & \nn{2} & \nn{\cdots} & \nn{n-1} & \nn{n} & \nn{n-1} & \nn{\cdots} & \nn{2} & \nn{1} \\
\nn{n-1} & \nn{\vdots} & \nn{2} & \nn{1} & \nn{0} & \nn{1} & \nn{2} & \nn{\cdots} & \nn{n-1} & \nn{n} & \nn{n-1} & \nn{\cdots} & \nn{2} \\
\nn{n} & \nn{n-1} & \nn{\vdots} & \nn{2} & \nn{1} & \nn{0} & \nn{1} & \nn{2} & \nn{\cdots} & \nn{n-1} & \nn{n} & \nn{n-1} & \nn{\cdots} \\
\nn{n-1} & \nn{n} & \nn{n-1} & \nn{\vdots} & \nn{2} & \nn{1} & \nn{0} & \nn{1} & \nn{2} & \nn{\cdots} & \nn{n-1} & \nn{n} & \nn{n-1} \\
\nn{\vdots} & \nn{n-1} & \nn{n} & \nn{n-1} & \nn{\vdots} & \nn{2} & \nn{1} & \nn{0} & \nn{1} & \nn{2} & \nn{\cdots} & \nn{n-1} & \nn{n} \\
} } \]

We define an {\bf escalator} to be a connected component of the entries $(i,j)$
in $\N^2$ satisfying 
\[ (j-i) \p (2n) \in \{n-1, n, n+1\}. \]
If an escalator lies above the main diagonal $i=j$, then we say it is an {\bf
upper escalator}; otherwise, it is called a {\bf lower escalator}.
Similarly, we define a {\bf descalator} to be a connected component of the
entries $(i,j)$ in $\N^2$ satisfying 
\[ (j-i) \p (2n) \in \{-1, 0, 1\}. \]
If a descalator lies above the main diagonal $i=j$, then we say it is an {\bf
upper descalator}; if a descalator lies below the main diagonal $i=j$, it is
called a {\bf lower descalator}.  There is one {\bf main descalator} that
includes the main diagonal $i=j$ which is neither upper nor lower.

Let $\lambda$ be a symmetric $(2n)$-core partition.  We define the {\bf residues
of the boxes in an upper escalator} lying on row $i$ depending on the number of
boxes in the $i$-th row of $\lambda$ that intersect the escalator, as shown in
Figure~\ref{f:calator}(a).

\begin{figure}[h]
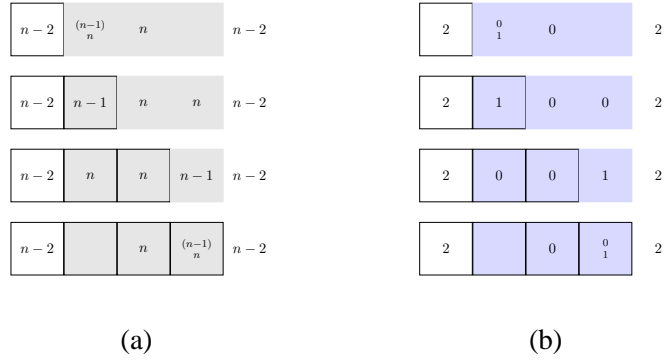

    \begin{tabular}{p{2in} p{2in}}
\myscalebox{0.5}{ \partition{ \np{n-2} & \ns{\ontop{(n-1)}{n}} & \ns{n} &
\ns{ } & \nn{n-2} \\ } } & \myscalebox{0.5}{ \partition{ \np{2} & \nb{\ontop{0}{1}} & \nb{0} & \nb{ } & \nn{2} \\ } } \\
\myscalebox{0.5}{ \partition{ \np{n-2} & \nsp{n-1} & \ns{n} & \ns{n} & \nn{n-2}
\\ } } & \myscalebox{0.5}{ \partition{ \np{2} & \nbp{1} & \nb{0} & \nb{0} & \nn{2} \\ } } \\
\myscalebox{0.5}{ \partition{ \np{n-2} & \nsp{n} & \nsp{n} & \ns{n-1} & \nn{n-2}
\\ } } & \myscalebox{0.5}{ \partition{ \np{2} & \nbp{0} & \nbp{0} & \nb{1} & \nn{2} \\ } } \\
\myscalebox{0.5}{ \partition{ \np{n-2} & \nsp{ } & \nsp{n} &
\nsp{\ontop{(n-1)}{n}} & \nn{n-2} \\ } } & \myscalebox{0.5}{ \partition{ \np{2} & \nbp{ } & \nbp{0} & \nbp{\ontop{0}{1}} & \nn{2} \\ } } \\
 & \\
 \hspace{0.6in} (a) & \hspace{0.6in} (b) \\
\end{tabular}
\caption{Residue assignments for upper escalators and descalators}\label{f:calator}
\end{figure}

Here, the outlined boxes represent entries that belong to the row of $\lambda$,
while the shaded cells represent entries in an upper escalator.  The schematic
in Figure~\ref{f:calator}(a) shows all ways in which these two types of entries can
overlap, and we have written the residue assignments that we wish to assign for
each entry.  

Note that in the first case, where $\lambda$ is adjacent to but does not
intersect the upper escalator, we view the first box of the escalator as being
simultaneously $(n-1)$-addable and $n$-addable.  In this case, the rightmost
cell of the upper escalator has undetermined residue, and is neither addable
nor removable.  A similar situation occurs when a row of $\lambda$ ends with
three boxes in the upper escalator.  In all other cases, the entries of an
upper escalator have undefined residue.

We similarly define the {\bf residues of the boxes in a lower escalator} lying
on column $j$ depending on the number of boxes in the $j$-th column of $\lambda$
that intersect the escalator.  The precise assignment is simply the transpose of
the schematic in Figure~\ref{f:calator}(a).

Moreover, we similarly define the {\bf residues of the boxes in an upper
descalator} on row $i$ depending on the number of boxes in the $i$-th row of
$\lambda$ that intersect the descalator, as shown in Figure~\ref{f:calator}(b).
The transpose of this schematic gives the assignment of {\bf residues of the
boxes in a lower descalator}.

Finally, the residues of the descalator containing the main diagonal are fixed,
and we define the {\bf residue of an entry $(i,j)$ lying on the main
descalator} to be
\[ \mres(i,j) = \begin{cases}
    0 & \text{ if } (j-i) = 0 \\
    1 & \text{ if } (j-i) \in \{1, -1\} \text{ and } (j+i) \equiv 1 \mod 4 \\
    0 & \text{ if } (j-i) \in \{1, -1\} \text{ and } (j+i) \equiv 3 \mod 4 \\
\end{cases} \]
where the upper left most box has coordinates $(i,j) = (1,1)$.

\begin{example}
In Figure~\ref{f:resex}, we consider residues using all of the features
discussed above.  It will turn out that this corresponds to type
$\widetilde{D}_5/D_5$ and that the assignment of residues in each of the other
types uses a subset of these features as described in the fourth column of
Table~\ref{t:main_results}.

Here, the unshaded entries of Figure~\ref{f:resex} are fixed residues given by
$\res(i,j)$ for $n = 5$; the gray shaded entries represent an upper and lower
escalator, and the blue shaded entries represent an upper and lower descalator.
The descalator containing the main diagonal has fixed residues given by
$\mres(i,j)$.  The precise residues of the boxes in the upper/lower
escalators/descalators for a particular partition depend on how the partition
intersects the shaded regions.

\begin{figure}[h!t]
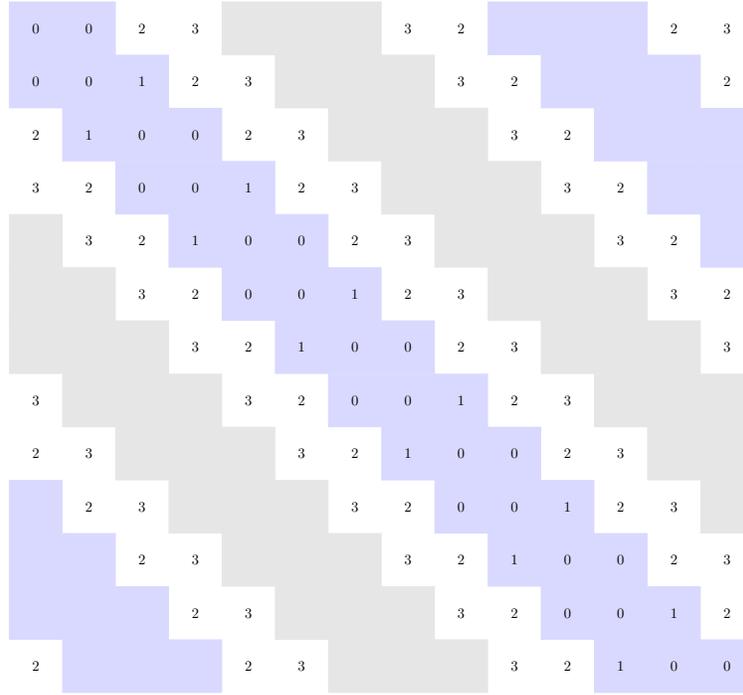

\myscalebox{0.5}{ \partition{
\nb{0} & \nb{0} & \nn{2} & \nn{3} & \ns{ } & \ns{ } & \ns{ } & \nn{3} & \nn{2} & \nb{ } & \nb{ } & \nb{ } & \nn{2} & \nn{3} \\
\nb{0} & \nb{0} & \nb{1} & \nn{2} & \nn{3} & \ns{ } & \ns{ } & \ns{ } & \nn{3} & \nn{2} & \nb{ } & \nb{ } & \nb{ } & \nn{2} \\
\nn{2} & \nb{1} & \nb{0} & \nb{0} & \nn{2} & \nn{3} & \ns{ } & \ns{ } & \ns{ } & \nn{3} & \nn{2} & \nb{ } & \nb{ } & \nb{ } \\
\nn{3} & \nn{2} & \nb{0} & \nb{0} & \nb{1} & \nn{2} & \nn{3} & \ns{ } & \ns{ } & \ns{ } & \nn{3} & \nn{2} & \nb{ } & \nb{ } \\ 
\ns{ } & \nn{3} & \nn{2} & \nb{1} & \nb{0} & \nb{0} & \nn{2} & \nn{3} & \ns{ } & \ns{ } & \ns{ } & \nn{3} & \nn{2} & \nb{ } \\
\ns{ } & \ns{ } & \nn{3} & \nn{2} & \nb{0} & \nb{0} & \nb{1} & \nn{2} & \nn{3} & \ns{ } & \ns{ } & \ns{ } & \nn{3} & \nn{2} \\
\ns{ } & \ns{ } & \ns{ } & \nn{3} & \nn{2} & \nb{1} & \nb{0} & \nb{0} & \nn{2} & \nn{3} & \ns{ } & \ns{ } & \ns{ } & \nn{3} \\
\nn{3} & \ns{ } & \ns{ } & \ns{ } & \nn{3} & \nn{2} & \nb{0} & \nb{0} & \nb{1} & \nn{2} & \nn{3} & \ns{ } & \ns{ } & \ns{ } \\
\nn{2} & \nn{3} & \ns{ } & \ns{ } & \ns{ } & \nn{3} & \nn{2} & \nb{1} & \nb{0} & \nb{0} & \nn{2} & \nn{3} & \ns{ } & \ns{ } \\
\nb{ } & \nn{2} & \nn{3} & \ns{ } & \ns{ } & \ns{ } & \nn{3} & \nn{2} & \nb{0} & \nb{0} & \nb{1} & \nn{2} & \nn{3} & \ns{ } \\
\nb{ } & \nb{ } & \nn{2} & \nn{3} & \ns{ } & \ns{ } & \ns{ } & \nn{3} & \nn{2} & \nb{1} & \nb{0} & \nb{0} & \nn{2} & \nn{3} \\
\nb{ } & \nb{ } & \nb{ } & \nn{2} & \nn{3} & \ns{ } & \ns{ } & \ns{ } & \nn{3} & \nn{2} & \nb{0} & \nb{0} & \nb{1} & \nn{2} \\
\nn{2} & \nb{ } & \nb{ } & \nb{ } & \nn{2} & \nn{3} & \ns{ } & \ns{ } & \ns{ } & \nn{3} & \nn{2} & \nb{1} & \nb{0} & \nb{0} \\
} } 
\caption{The fixed residues in $\widetilde{D}_5/D_5$}\label{f:resex}
\end{figure}
\end{example}

\begin{definition}\label{d:residue}
Suppose $\lambda$ is a symmetric $(2n)$-core partition representing an element
of type $\widetilde{W}_n/W_n \in \{\widetilde{C}_n/C_n, \widetilde{B}_n/B_n,
\widetilde{B}_n/D_n, \widetilde{D}_n/D_n\}$, and embed the digram of $\lambda$
in $\N^2$ as above.  Then we assign the {\bf residue} of $(i,j) \in \N^2$ as
described above, using the features listed in the fourth column of
Table~\ref{t:main_results}.
\end{definition}

\begin{table}[h!t]
\begin{tabular}{|p{0.3in}|p{1.7in}|p{1.7in}|p{1.9in}|}
\hline
Type & Abaci & Partitions & Features for residue assignment \\
\hline
$\widetilde{C}/C$ & balanced flush abaci & symmetric $(2n)$-cores & fixed residues only \\
\hline
$\widetilde{B}/B$ & even balanced flush abaci & even symmetric $(2n)$-cores & fixed residues with descalators \\
\hline
$\widetilde{B}/D$ & balanced flush abaci & symmetric $(2n)$-cores & fixed residues with escalators \\
\hline
$\widetilde{D}/D$ & even balanced flush abaci & even symmetric $(2n)$-cores & fixed residues with escalators and descalators \\
\hline
\end{tabular}
\caption{Core partitions for $\widetilde{W}/W$}\label{t:main_results}
\end{table}

\begin{definition}
Given a symmetric $(2n)$-core partition $\lambda$ with residues assigned, we say
that two boxes from $\N^2$ are {\bf $i$-connected} whenever they share an edge
and have the same residue $i$.  We refer to the $i$-connected components of
boxes from $\N^2$ as {\bf $i$-components}.

We say that an $i$-component $C$ is {\bf addable} if adding the boxes of $C$ to
the diagram of $\lambda$ results in a partition, and that $C$ is {\bf removable}
if removing the boxes of $C$ from the diagram of $\lambda$ results in a
partition.
\end{definition}

We are now in a position to state the action of $\widetilde{W}$ on symmetric
$(2n)$-core partitions in terms of residues.

\begin{theorem}\label{t:core_action}
Let $w \in \widetilde{W}/W$ and suppose $\lambda = \Fc(w)$.  If $s_i$ is an
ascent for $w$ then $s_i$ acts on $\lambda$ by adding all addable $i$-components
to $\lambda$.  If $s_i$ is a descent for $w$ then $s_i$ acts by removing all
removable $i$-components from $\lambda$.  If $s_i$ is neither an ascent nor a
descent for $w$ then $s_i$ does not change $\lambda$.
\end{theorem}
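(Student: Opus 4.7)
The plan is to translate the action of $\widetilde{W}$ on abaci from Section~\ref{s:ab_action} through the bijection $\Fc$ of Definition~\ref{def:Fc}. Each Coxeter generator permutes (and in the affine cases, shifts) a specific pair or quadruple of columns of the abacus, and the task is to match the resulting change of the associated core partition with the addable or removable $i$-components.

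First I would set up a dictionary between the abacus and the partition. By Definition~\ref{def:Fc}, boxes of $\Fc(a)$ correspond to bead--gap pairs in which the gap precedes the bead in reading order. Since each row of the abacus contributes $2n$ positions to the reading order, the diagonal on which such a box lies depends only on the pair of runners containing the gap and the bead, and in particular toggling bead--gap pairs whose runner-labels are $i$ and $i+1$ produces boxes on a pair of fixed-residue-$i$ diagonals, while toggling pairs across runner-pairs $\{n{-}1,n\}$ and $\{n{+}1,n{+}2\}$ (respectively $\{1,2n\}$ or $\{1,2,2n{-}1,2n\}$) produces boxes in the escalator (resp.\ descalator) bands of Figure~\ref{f:calator}.

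The proof then proceeds case by case over the five types of Coxeter generators listed in Section~\ref{s:ab_action}. For $s_i$ with $1 \leq i \leq n-1$, swapping runners $i \leftrightarrow i+1$ and $2n-i \leftrightarrow 2n-i+1$ toggles precisely the bead--gap pairs with consecutive labels on these runners. Because $\lambda$ is a $(2n)$-core, each $i$-component consists of a single box lying on a fixed-residue-$i$ diagonal; a gap on runner $i+1$ immediately followed in reading order by a bead on runner $i$ represents an addable box of residue $i$, and the reverse configuration represents a removable one. The abacus swap simultaneously inverts all such local configurations on both the runner-pair and its mirror, matching addable components when $s_i$ is an ascent and removable components when $s_i$ is a descent. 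The cases $s_n^C$ and $s_0^C$ are handled analogously, restricted to the central runner-pair $n \leftrightarrow n+1$ (fixed residue $n$) and to the corner boxes of the main diagonal (residue $0$ under $\mres$), with the level shift in $s_0^C$ providing exactly the bead-gap swap needed at the corner.

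The substantive cases are $s_n^D$ and $s_0^D$, where the residues of affected boxes depend on how $\lambda$ meets the escalator or descalator region. For $s_n^D$, the swap of column pair $\{n-1,n\}$ with $\{n+1,n+2\}$ toggles bead--gap configurations across these four runners, producing ribbons of two or three boxes in the escalator band. I would verify by direct enumeration that the residue rules of Figure~\ref{f:calator}(a) assign labels to these ribbons in exactly the combinations needed so that each addable or removable $(n{-}1)$- or $n$-component matches one of the four row patterns in the figure, and that the abacus swap interchanges the top two with the bottom two rows. The case $s_0^D$ is analogous using Figure~\ref{f:calator}(b); here the level shift is exactly what is required to swap descalator boxes that abut the main diagonal. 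Finally, if $s_i$ is neither an ascent nor a descent, the corresponding columns of the abacus are in a configuration fixed by the swap, so no $i$-components exist and $\lambda$ is unchanged, which handles the third clause.

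The main obstacle is precisely the enumeration for $s_0^D$ and $s_n^D$: because the escalator and descalator regions span three diagonals and the residue of a box depends on how $\lambda$ intersects the region, one must verify that the Figure~\ref{f:calator} rules track the four-column swap on the nose, including the delicate case where a row terminates mid-escalator and a single box is declared simultaneously $(n{-}1)$-addable and $n$-addable. The length of this case analysis is what presumably justifies deferring the detailed verification to Section~\ref{s:proofs}.
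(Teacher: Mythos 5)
Your proposal follows essentially the same route as the paper's proof: the fixed-residue generators are handled by observing that $i$-components are single boxes whose addable/removable configurations are exactly the bead--gap adjacencies toggled by the runner swap, and $s_n^D$ (with $s_0^D$ analogous) is handled by enumerating the possible bead/gap configurations of a single abacus row in the four affected columns and checking each against the residue rules of Figure~\ref{f:calator}. The one ingredient you leave implicit is Lemma~\ref{l:balance_up_low}, which the paper uses both to decide whether an escalator segment lies above or below the main diagonal (hence whether its residues are assigned horizontally or vertically) and to rule out two bead/gap configurations that would otherwise break the case analysis.
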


The proof of this result is postponed to Section~\ref{s:proofs}.

\begin{example}
Consider the minimal length coset representative $w=[-12, -7, -5, 2, 3, 8, 9, 16, 18, 23]\in \tld{D}_5/D_5$.  The corresponding core partition $\lam=\Fc(w)=(11,8,7,4,3,3,3,2,1,1,1)$ is pictured in Figure~\ref{fig:dd_ex}. The known residues are placed in their corresponding boxes.  

If we were to apply the generator $s_0$ to $\lam$, this would remove four
boxes---the two boxes in the upper right corner and the two boxes in the lower
left corner. The box on the main diagonal with residue $0$ is not removed
because we cannot remove only part of its connected component of residue $0$
boxes.

From $\lam$, we determine that $s_0$ and $s_4$ are descents (as they would
remove boxes), $s_1$ and $s_3$ are ascents (as they would add boxes), and $s_2$
and $s_5$ are neither ascents nor descents (as they would leave the diagram
unchanged).  
\end{example}

\begin{figure}[h]
\myscalebox{0.5}{\begin{tikzpicture}
\draw (0,0) node [rectangle, minimum size=1.5cm, inner sep=0pt, draw, anchor=south west, fill=blue!20] {$0$};
\draw (1.5cm,0) node [rectangle, minimum size=1.5cm, inner sep=0pt, draw, anchor=south west, fill=blue!20] {$0$};
\draw (3.0cm,0) node [rectangle, minimum size=1.5cm, inner sep=0pt, draw, anchor=south west] {$2$};
\draw (4.5cm,0) node [rectangle, minimum size=1.5cm, inner sep=0pt, draw, anchor=south west] {$3$};
\draw (6.0cm,0) node [rectangle, minimum size=1.5cm, inner sep=0pt, draw, anchor=south west, fill=gray!20] {};
\draw (7.5cm,0) node [rectangle, minimum size=1.5cm, inner sep=0pt, draw, anchor=south west, fill=gray!20] {};
\draw (9.0cm,0) node [rectangle, minimum size=1.5cm, inner sep=0pt, draw, anchor=south west, fill=gray!20] {};
\draw (10.5cm,0) node [rectangle, minimum size=1.5cm, inner sep=0pt, draw, anchor=south west] {$3$};
\draw (12.0cm,0) node [rectangle, minimum size=1.5cm, inner sep=0pt, draw, anchor=south west] {$2$};
\draw (13.5cm,0) node [rectangle, minimum size=1.5cm, inner sep=0pt, draw, anchor=south west, fill=blue!20] {$0$};
\draw (15.0cm,0) node [rectangle, minimum size=1.5cm, inner sep=0pt, draw, anchor=south west, fill=blue!20] {$0$};
\draw (16.5cm,0) node [rectangle, minimum size=1.5cm, inner sep=0pt, anchor=south west, fill=blue!20] {$1$};
\draw (0,-1.5cm) node [rectangle, minimum size=1.5cm, inner sep=0pt, draw, anchor=south west, fill=blue!20] {$0$};
\draw (1.5cm,-1.5cm) node [rectangle, minimum size=1.5cm, inner sep=0pt, draw, anchor=south west, fill=blue!20] {$0$};
\draw (3.0cm,-1.5cm) node [rectangle, minimum size=1.5cm, inner sep=0pt, draw, anchor=south west, fill=blue!20] {$1$};
\draw (4.5cm,-1.5cm) node [rectangle, minimum size=1.5cm, inner sep=0pt, draw, anchor=south west] {$2$};
\draw (6.0cm,-1.5cm) node [rectangle, minimum size=1.5cm, inner sep=0pt, draw, anchor=south west] {$3$};
\draw (7.5cm,-1.5cm) node [rectangle, minimum size=1.5cm, inner sep=0pt, draw, anchor=south west, fill=gray!20] {};
\draw (9.0cm,-1.5cm) node [rectangle, minimum size=1.5cm, inner sep=0pt, draw, anchor=south west, fill=gray!20] {$5$};
\draw (10.5cm,-1.5cm) node [rectangle, minimum size=1.5cm, inner sep=0pt, draw, anchor=south west, fill=gray!20] {$\ontop{4}{5}$};
\draw (12.0cm,-1.5cm) node [rectangle, minimum size=1.5cm, inner sep=0pt,  anchor=south west] {$3$};
\draw (13.5cm,-1.5cm) node [rectangle, minimum size=1.5cm, inner sep=0pt,  anchor=south west] {$2$};
\draw (15.0cm,-1.5cm) node [rectangle, minimum size=1.5cm, inner sep=0pt,  anchor=south west, fill=blue!20] {};
\draw (16.5cm,-1.5cm) node [rectangle, minimum size=1.5cm, inner sep=0pt,  anchor=south west, fill=blue!20] {};
\draw (0,-3.0cm) node [rectangle, minimum size=1.5cm, inner sep=0pt, draw, anchor=south west] {$2$};
\draw (1.5cm,-3.0cm) node [rectangle, minimum size=1.5cm, inner sep=0pt, draw, anchor=south west, fill=blue!20] {$1$};
\draw (3.0cm,-3.0cm) node [rectangle, minimum size=1.5cm, inner sep=0pt, draw, anchor=south west, fill=blue!20] {$0$};
\draw (4.5cm,-3.0cm) node [rectangle, minimum size=1.5cm, inner sep=0pt, draw, anchor=south west, fill=blue!20] {$0$};
\draw (6.0cm,-3.0cm) node [rectangle, minimum size=1.5cm, inner sep=0pt, draw, anchor=south west] {$2$};
\draw (7.5cm,-3.0cm) node [rectangle, minimum size=1.5cm, inner sep=0pt, draw, anchor=south west] {$3$};
\draw (9.0cm,-3.0cm) node [rectangle, minimum size=1.5cm, inner sep=0pt, draw, anchor=south west, fill=gray!20] {$4$};
\draw (10.5cm,-3.0cm) node [rectangle, minimum size=1.5cm, inner sep=0pt,  anchor=south west, fill=gray!20] {$5$};
\draw (12.0cm,-3.0cm) node [rectangle, minimum size=1.5cm, inner sep=0pt,  anchor=south west, fill=gray!20] {$5$};
\draw (0,-4.5cm) node [rectangle, minimum size=1.5cm, inner sep=0pt, draw, anchor=south west] {$3$};
\draw (1.5cm,-4.5cm) node [rectangle, minimum size=1.5cm, inner sep=0pt, draw, anchor=south west] {$2$};
\draw (3.0cm,-4.5cm) node [rectangle, minimum size=1.5cm, inner sep=0pt, draw, anchor=south west, fill=blue!20] {$0$};
\draw (4.5cm,-4.5cm) node [rectangle, minimum size=1.5cm, inner sep=0pt, draw, anchor=south west, fill=blue!20] {$0$};
\draw (6.0cm,-4.5cm) node [rectangle, minimum size=1.5cm, inner sep=0pt, anchor=south west, fill=blue!20] {$1$};
\draw (7.5cm,-4.5cm) node [rectangle, minimum size=1.5cm, inner sep=0pt, anchor=south west] {$2$};
\draw (9.0cm,-4.5cm) node [rectangle, minimum size=1.5cm, inner sep=0pt, anchor=south west] {$3$};
\draw (10.5cm,-4.5cm) node [rectangle, minimum size=1.5cm, inner sep=0pt, anchor=south west, fill=gray!20] {};
\draw (0,-6.0cm) node [rectangle, minimum size=1.5cm, inner sep=0pt, draw, anchor=south west, fill=gray!20] {};
\draw (1.5cm,-6.0cm) node [rectangle, minimum size=1.5cm, inner sep=0pt, draw, anchor=south west] {$3$};
\draw (3.0cm,-6.0cm) node [rectangle, minimum size=1.5cm, inner sep=0pt, draw, anchor=south west] {$2$};
\draw (4.5cm,-6.0cm) node [rectangle, minimum size=1.5cm, inner sep=0pt, anchor=south west, fill=blue!20] {$1$};
\draw (6.0cm,-6.0cm) node [rectangle, minimum size=1.5cm, inner sep=0pt, anchor=south west, fill=blue!20] {$0$};
\draw (0,-7.5cm) node [rectangle, minimum size=1.5cm, inner sep=0pt, draw, anchor=south west, fill=gray!20] {};
\draw (1.5cm,-7.5cm) node [rectangle, minimum size=1.5cm, inner sep=0pt, draw, anchor=south west, fill=gray!20] {};
\draw (3.0cm,-7.5cm) node [rectangle, minimum size=1.5cm, inner sep=0pt, draw, anchor=south west] {$3$};
\draw (4.5cm,-7.5cm) node [rectangle, minimum size=1.5cm, inner sep=0pt, anchor=south west] {$2$};
\draw (0,-9.0cm) node [rectangle, minimum size=1.5cm, inner sep=0pt, draw, anchor=south west, fill=gray!20] {};
\draw (1.5cm,-9.0cm) node [rectangle, minimum size=1.5cm, inner sep=0pt, draw, anchor=south west, fill=gray!20] {$5$};
\draw (3.0cm,-9.0cm) node [rectangle, minimum size=1.5cm, inner sep=0pt, draw, anchor=south west, fill=gray!20] {$4$};
\draw (4.5cm,-9.0cm) node [rectangle, minimum size=1.5cm, inner sep=0pt, anchor=south west] {$3$};
\draw (0,-10.5cm) node [rectangle, minimum size=1.5cm, inner sep=0pt, draw, anchor=south west] {$3$};
\draw (1.5cm,-10.5cm) node [rectangle, minimum size=1.5cm, inner sep=0pt, draw, anchor=south west, fill=gray!20] {$\ontop{4}{5}$};
\draw (3.0cm,-10.5cm) node [rectangle, minimum size=1.5cm, inner sep=0pt, anchor=south west, fill=gray!20] {$5$};
\draw (4.5cm,-10.5cm) node [rectangle, minimum size=1.5cm, inner sep=0pt, anchor=south west, fill=gray!20] {};
\draw (0,-12.0cm) node [rectangle, minimum size=1.5cm, inner sep=0pt, draw, anchor=south west] {$2$};
\draw (1.5cm,-12.0cm) node [rectangle, minimum size=1.5cm, inner sep=0pt,  anchor=south west] {$3$};
\draw (3.0cm,-12.0cm) node [rectangle, minimum size=1.5cm, inner sep=0pt, anchor=south west, fill=gray!20] {$5$};
\draw (0,-13.5cm) node [rectangle, minimum size=1.5cm, inner sep=0pt, draw, anchor=south west, fill=blue!20] {$0$};
\draw (1.5cm,-13.5cm) node [rectangle, minimum size=1.5cm, inner sep=0pt,  anchor=south west] {$2$};
\draw (0,-15.0cm) node [rectangle, minimum size=1.5cm, inner sep=0pt, draw, anchor=south west, fill=blue!20] {$0$};
\draw (1.5cm,-15.0cm) node [rectangle, minimum size=1.5cm, inner sep=0pt,  anchor=south west, fill=blue!20] {};
\draw (0,-16.5cm) node [rectangle, minimum size=1.5cm, inner sep=0pt, anchor=south west, fill=blue!20] {$1$};
\draw (1.5cm,-16.5cm) node [rectangle, minimum size=1.5cm, inner sep=0pt, anchor=south west, fill=blue!20] {};
\draw [line width=3pt] (0cm,-15.0cm) node (test1) [circle, fill=black, inner sep=0pt, minimum size=0pt, draw] { }
 -- ++(0:1.5cm)  node (test2) [circle, fill=black, inner sep=0pt, minimum size=0pt, draw] { }
 -- ++(90:1.5cm)  node (test3) [circle, fill=black, inner sep=0pt, minimum size=0pt, draw] { }
 -- ++(90:1.5cm)  node (test4) [circle, fill=black, inner sep=0pt, minimum size=0pt, draw] { }
 -- ++(90:1.5cm)  node (test5) [circle, fill=black, inner sep=0pt, minimum size=0pt, draw] { }
 -- ++(0:1.5cm)  node (test6) [circle, fill=black, inner sep=0pt, minimum size=0pt, draw] { }
 -- ++(90:1.5cm)  node (test7) [circle, fill=black, inner sep=0pt, minimum size=0pt, draw] { }
 -- ++(0:1.5cm)  node (test8) [circle, fill=black, inner sep=0pt, minimum size=0pt, draw] { }
 -- ++(90:1.5cm)  node (test9) [circle, fill=black, inner sep=0pt, minimum size=0pt, draw] { }
 -- ++(90:1.5cm)  node (tes4) [circle, fill=black, inner sep=0pt, minimum size=0pt, draw] { }
 -- ++(90:1.5cm)  node (tes5) [circle, fill=black, inner sep=0pt, minimum size=0pt, draw] { }
 -- ++(0:1.5cm)  node (tes6) [circle, fill=black, inner sep=0pt, minimum size=0pt, draw] { }
 -- ++(90:1.5cm)  node (tes7) [circle, fill=black, inner sep=0pt, minimum size=0pt, draw] { }
 -- ++(0:1.5cm)  node (tes8) [circle, fill=black, inner sep=0pt, minimum size=0pt, draw] { }
 -- ++(0:1.5cm)  node (te8) [circle, fill=black, inner sep=0pt, minimum size=0pt, draw] { }
 -- ++(0:1.5cm)  node (te7) [circle, fill=black, inner sep=0pt, minimum size=0pt, draw] { }
 -- ++(90:1.5cm)  node (te5) [circle, fill=black, inner sep=0pt, minimum size=0pt, draw] { }
 -- ++(0:1.5cm)  node (te6) [circle, fill=black, inner sep=0pt, minimum size=0pt, draw] { }
 -- ++(90:1.5cm)  node (t5) [circle, fill=black, inner sep=0pt, minimum size=0pt, draw] { }
 -- ++(0:1.5cm)  node (t6) [circle, fill=black, inner sep=0pt, minimum size=0pt, draw] { }
 -- ++(0:1.5cm)  node (t7) [circle, fill=black, inner sep=0pt, minimum size=0pt, draw] { }
 -- ++(0:1.5cm)  node (t8) [circle, fill=black, inner sep=0pt, minimum size=0pt, draw] { }
 -- ++(90:1.5cm)  node (est5) [circle, fill=black, inner sep=0pt, minimum size=0pt, draw] { };
\end{tikzpicture}}
\caption{The core partition $\lam=\Fc(w)=(11,8,7,4,3,3,3,2,1,1,1)$ corresponding to the minimal length coset representative $w=[-12, -7, -5, 2, 3, 8, 9, 16, 18, 23]\in \tld{D}_5/D_5$. The numbers indicate the residues of the boxes.}
\label{fig:dd_ex}
\end{figure}
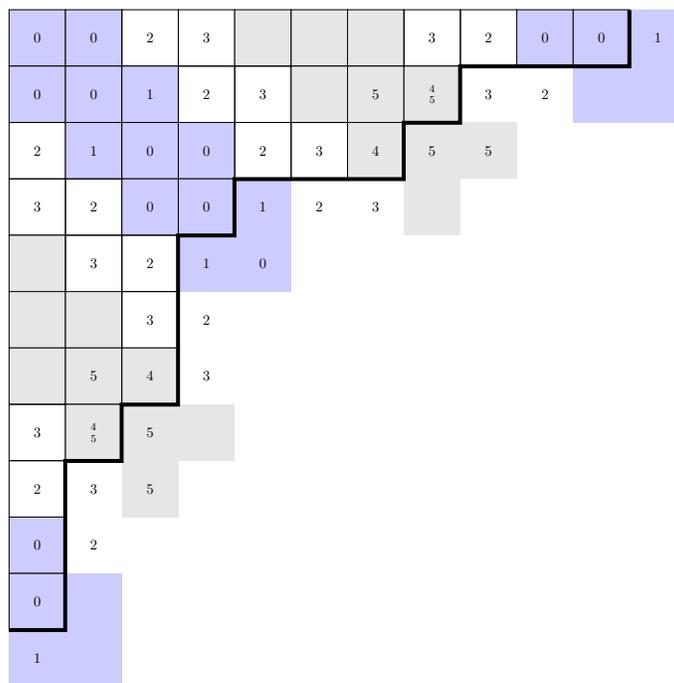

\subsection{Bruhat order on symmetric $(2n)$-cores}\label{s:bruhat}

In this section, we use an argument of Lascoux \cite{lascoux-cores} to show that
Bruhat order on the minimal length coset representatives $\widetilde{W}/W$
corresponds to a modified containment order on the corresponding core partition
diagrams.  This affirmatively answers a question of Billey and Mitchell \cite[Remark 12]{billey--mitchell}.

\begin{definition}\label{d:mcc}
Let $\lambda$ and $\mu$ be two symmetric $(2n)$-cores.  Suppose that every box
of $\mu$ that does not lie on an escalator or descalator is also a box of
$\lambda$, and that:
\begin{itemize}
    \item  Whenever the $i$-th row of $\mu$ intersects an upper escalator, upper
        descalator or the main descalator in $1$ box, then the $i$-th row of $\lambda$
        intersects the given region in $1$ or $3$ boxes.
    \item  Whenever the $i$-th row of $\mu$ intersects an upper escalator or upper
        descalator or the main descalator in $2$ boxes, then the $i$-th row of $\lambda$
        intersects the given region in $2$ or $3$ boxes.
    \item  Whenever the $i$-th row of $\mu$ intersects an upper escalator or upper
        descalator or the main descalator in $3$ boxes, then the $i$-th row of $\lambda$
        intersects the given region in $3$ boxes.
\end{itemize}
In this situation, we say that $\lambda$ {\bf contains} $\mu$, denoted $\lambda
\unrhd \mu$.
\end{definition}

\begin{theorem}\label{t:core_bruhat}
Let $w, x \in \widetilde{W}/W$.  Then $w \geq x$ in Bruhat order if and only if
$\Fc(w) \unrhd \Fc(x)$.
\end{theorem}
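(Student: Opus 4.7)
The proof plan follows Lascoux's approach \cite{lascoux-cores} in type $\widetilde{A}$, extended to handle the escalator and descalator features of the present setting. I would prove both implications by induction, using Theorem \ref{t:core_action} as the bridge between Bruhat cover relations in $\widetilde{W}/W$ and changes to the associated symmetric $(2n)$-core.

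For the forward direction, I would induct on $\ell(w) - \ell(x)$. By the subword property of Bruhat order, any comparison $w \geq x$ in $\widetilde{W}/W$ can be realized along a chain of minimal length coset representatives produced by successively deleting a simple reflection from the right of a reduced expression for $w$, yielding intermediate elements $w s_i$ whenever $s_i$ is a right descent. By Theorem \ref{t:core_action}, the core $\Fc(w s_i)$ is obtained from $\Fc(w)$ by removing all removable $i$-components. A direct verification against Definition \ref{d:mcc}, case by case on whether the $i$-component lies in a fixed-residue region, an escalator, or a descalator, then shows $\Fc(w s_i) \unlhd \Fc(w)$; since $\unrhd$ is transitive, this gives $\Fc(w) \unrhd \Fc(x)$.

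For the reverse direction, I would induct on $|\Fc(w)| - |\Fc(x)|$, the base case being the injectivity of $\Fc$ (Proposition \ref{p:bfabaci_to_cores}). Given $\Fc(w) \rhd \Fc(x)$, I would locate a removable corner of $\Fc(w)$ not demanded by the containment relation with $\Fc(x)$: scan the rows from top to bottom and find the first row where $\Fc(w)$ strictly exceeds the minimum row length that Definition \ref{d:mcc} forces given $\Fc(x)$'s row, then take the rightmost box there. The residue $i$ of this box identifies a removable $i$-component $C$; by Theorem \ref{t:core_action}, $s_i$ is a right descent of $w$ and $\mu := \Fc(w s_i)$ is obtained from $\Fc(w)$ by removing $C$ (and, by symmetry, the corresponding component below the main diagonal). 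The point is then to verify $\mu \unrhd \Fc(x)$, after which induction yields $w s_i \geq x$, hence $w \geq x$.

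The main obstacle will be executing this reverse direction inside the escalator and descalator regions, where a single $i$-component can comprise two or three boxes and may straddle the main diagonal in the main descalator (whose residues depend on $(i+j) \bmod 4$). The non-monotone passages in Definition \ref{d:mcc}, which allow a row-intersection to jump from 1 to 3 or from 2 to 3, mean that removing a two-box $i$-component from a three-box intersection of $\Fc(w)$ could threaten the containment condition if $\Fc(x)$'s intersection in that region has exactly one box. A careful case analysis using the residue schematics of Figure \ref{f:calator}, together with the enforced symmetry of the diagram from Lemma \ref{l:ab_balance}, is required to show that the selected removable $i$-component is always a safe choice and that the resulting $\mu$ remains a symmetric $(2n)$-core of the correct type.
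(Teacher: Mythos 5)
There is a genuine gap in both halves of your plan, and it is the same gap: you hold $x$ fixed while lowering $w$, whereas the induction only closes if you lower $w$ and $x$ \emph{in tandem}. In the forward direction, your claim that a comparison $w \geq x$ can be realized by a chain of intermediate elements obtained by successively stripping descents off $w$ is false: Bruhat covers are given by multiplication by (generally non-simple) reflections, so the elements between $x$ and $w$ in a maximal chain need not be of the form ``$w$ with some descents removed,'' and there is no reason the chain should pass through $s_i w$. (Note also that in this paper the quotient is $\widetilde{W}/W$ with $s_0$ the unique \emph{right} descent of every representative, so the length-changing operation on representatives is \emph{left} multiplication $s_i w$, not $w s_i$.) In the reverse direction the problem is sharper: by Theorem~\ref{t:core_action}, applying $s_i$ removes \emph{all} removable $i$-components of $\Fc(w)$, not just your chosen ``safe'' component $C$ and its mirror image. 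Some of those other $i$-components may be boxes of $\Fc(x)$, in which case $\Fc(s_i w) \unrhd \Fc(x)$ simply fails, and your induction stalls.

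The paper repairs both problems at once with the Lifting Lemma \cite[Proposition 2.2.7]{b-b}: choosing any $s_i$ with $s_i w < w$, one gets the equivalence $x \leq w$ if and only if $\min(x, s_i x) \leq s_i w$, and both $s_i w$ and $\min(x, s_i x)$ remain minimal length coset representatives. The induction is then on $\ell(w)$ alone, replacing the pair $(w,x)$ by $(s_i w, \min(x, s_i x))$, and the combinatorial step is the two-sided claim that $\Fc(w) \unrhd \Fc(x)$ holds if and only if $\Fc(s_i w) \unrhd \Fc(\min(x,s_i x))$ --- which works precisely because any removable $i$-box of $\Fc(w)$ that lies in $\Fc(x)$ is also removable there and gets removed on both sides. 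You should restructure your argument around this simultaneous reduction. Finally, be aware that the escalator/descalator verification you defer to ``a careful case analysis'' is where the real content of Definition~\ref{d:mcc} lives (it is exactly why the definition permits intersections to jump from $1$ or $2$ boxes to $3$); a proof that postpones it has not yet engaged with the hard part.
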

\begin{proof}
We proceed by induction on the number of boxes in $w$.  When the Coxeter length of $w$ is $1$,
then $x = w$, $x = e$, or $x$ is not related to $w$ in Bruhat order.  In each
case, the result is clear.

Let $s_i$ be a Coxeter generator such that $s_i w < w$ in $\widetilde{W}/W$.
If $x \leq w$ then the Lifting Lemma \cite[Proposition 2.2.7]{b-b} implies that
$s_i w \geq \min(x, s_i x)$ in $\widetilde{W}$.  Every reduced expression for
a nontrivial element of $\widetilde{W}/W$ ends in $s_0$, so if $s_i x < x$ then
$s_i x \in \widetilde{W}/W$.  Therefore, $s_i w \geq \min(x, s_i x)$ in
$\widetilde{W}/W$.

Conversely, if $w > s_i w \geq \min(x, s_i x)$ in $\widetilde{W}/W$ then we find
that $w \geq x$; this follows directly when $x < s_i x$ or $x = s_i x$, and
follows by another application of the Lifting Lemma when $s_i x < x$.

We therefore have the equivalence $x \leq w$ if and only if $\min(x, s_i x) \leq
s_i w$.  Hence, we reduce to considering the pair $w' = s_i w$, $x' = \min(x,
s_i x)$ in $\widetilde{W}/W$, and we need to show that $\Fc(w') \unrhd \Fc(x')$
if and only if $\Fc(w) \unrhd \Fc(x)$ to complete the proof by induction.


Suppose $\Fc(w) \unrhd \Fc(x)$.  Then we pass to $\Fc(s_i w)$ by removing
boxes with residue $i$ from the end of their rows and columns.  By
Definition~\ref{d:mcc}, every such box from $\Fc(w)$ is either removable in
$\Fc(x)$ or else absent from $\Fc(x)$.  Hence, $\Fc(w') \unrhd \Fc(x')$.
Similarly, it follows from Definition~\ref{d:mcc} that if $\Fc(w') \unrhd
\Fc(x')$ then every addable box with residue $i$ in $\Fc(x')$ is either addable
in $\Fc(w')$ or already present in $\Fc(w')$.  Hence, $\Fc(w) \unrhd \Fc(x)$.
\end{proof}

\bigskip
\section{Reduced expressions from cores}\label{s:upper_bounded_diagrams}

\subsection{The upper diagram}

Let $\lambda = \Fc(w)$.  We now define a recursive procedure to obtain a
canonical reduced expression for $w$ from $\lambda$.  Recall that the first
diagonal is the diagonal immediately to the right of the main diagonal.

\begin{definition}\label{def:cpp}
Define the {\bf reference diagonal} to be 
\[ \begin{cases} \text{the main diagonal } & \text{ in types $\widetilde{C}/C$ and $\widetilde{B}/D$ } \\
\text{the first diagonal } & \text{ in types $\widetilde{B}/B$ and $\widetilde{D}/D$ } \\
 \end{cases} \]

Given a core $\lam=\lam^{(1)}$ we define the {\bf central peeling procedure}
recursively as follows.  At step $i$, we consider $\lam^{(i)}$ and set
$d^{(i)}$ to be the number of boxes on the reference diagonal of
$\lam^{(i)}$.  Suppose $B_i$ is the box at the end of the $d^{(i)}$-th row of
$\lam^{(i)}$ and $r_i$ is the residue of this box.  In the case when $B_i$ is
both $n$-removable and $(n-1)$-removable, we set $r_i = n$.  Apply generator
$s_{r_i}$ to $\lam^{(i)}$ to find $\lam^{(i+1)}$.

We define $\Fr(\lam)$ to be the product of generators $s_{r_1}s_{r_2}\cdots
s_{r_\ell}$.

We also define the {\bf upper diagram}, denoted $U_\lam$, to be the union of the
boxes $B_i$ encountered in the central peeling procedure respecting the
following conditions: In $\tld{B}/D$ and $\tld{D}/D$, the application of
$s_n^D$ removes two boxes from the $d^{(i)}$-th row and we record in $U_\lam$ the
box {\em not} on the $n$-th diagonal; In $\tld{B}/B$ and $\tld{D}/D$, the
application of $s_0^D$ removes two boxes from the $d^{(i)}$-th row and we record
in $U_\lam$ the box {\em not} on the main or $2n$-th diagonal.
\end{definition}

\begin{proposition}\label{p:peeling}
We have that $\Fr(\lam)$ is a reduced expression for $w$ and the number of boxes
in $U_\lam$ equals the Coxeter length of $\Fw(\lam)$. 
\end{proposition}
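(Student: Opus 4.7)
The plan is to prove both statements by induction on the number of boxes in $\lam$. The base case $\lam = \emptyset$ gives $w = e$: the procedure terminates immediately, $\Fr(\lam)$ is the empty word, and $U_\lam = \emptyset$, so $\ell(w) = 0 = |U_\lam|$. For the inductive step, the key claim is that at step $i$ the generator $s_{r_i}$ is a descent of $w^{(i)} := \Fw(\lam^{(i)})$, so that by Theorem~\ref{t:core_action} applying $s_{r_i}$ removes all removable $r_i$-components from $\lam^{(i)}$---in particular it removes the selected box $B_i$, strictly shrinking the partition.

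To establish this claim I would argue that $B_i$ is always a removable corner of $\lam^{(i)}$ whose residue $r_i$ is well-defined. Since $d^{(i)}$ counts the boxes on the reference diagonal, row $d^{(i)}$ of $\lam^{(i)}$ reaches exactly to or past that diagonal, so by symmetry the box $B_i$ at the end of row $d^{(i)}$ is a corner (its right and lower neighbors in $\N^2$ lie outside $\lam^{(i)}$). In the generic case where $B_i$ sits on a fixed-residue diagonal, $r_i = \res(d^{(i)}, \lam^{(i)}_{d^{(i)}})$ and there is nothing more to say. The main obstacle is the case analysis for when $B_i$ lies on an upper escalator, an upper descalator, or the main descalator: here one must verify, using the schematics in Figure~\ref{f:calator} and the formula for $\mres$, that the residue read off from $B_i$ indeed corresponds to a removable component, that the convention $r_i = n$ for an $n/(n-1)$-removable corner is consistent with Theorem~\ref{t:core_action}, and that this is compatible across the four types according to Table~\ref{t:main_results}. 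Symmetry of $\lam^{(i)}$ automatically pairs each removable upper component with a lower one, so counting a single box $B_i$ per step in $U_\lam$ is consistent; the special bookkeeping for $s_0^D$ and $s_n^D$ in $\tld{B}/B$, $\tld{B}/D$, and $\tld{D}/D$ is what ensures exactly one box is recorded even though two boxes of the $d^{(i)}$-th row are removed in those steps.

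Once the claim is in hand, the inductive step closes cleanly. Because $s_{r_i}$ is a descent, $\ell(w^{(i+1)}) = \ell(w^{(i)}) - 1$, and the left action translates into $w^{(i)} = s_{r_i} w^{(i+1)}$. Applying the inductive hypothesis to the smaller core $\lam^{(i+1)}$, the suffix $s_{r_{i+1}} s_{r_{i+2}} \cdots s_{r_\ell}$ is a reduced expression for $w^{(i+1)}$ of length $|U_{\lam^{(i+1)}}|$. Prepending $s_{r_i}$ produces a word for $w^{(i)}$ of length one greater, which is automatically reduced since $s_{r_i}$ is a descent. After $\ell$ steps we arrive at the empty partition and the identity element, so $\Fr(\lam) = s_{r_1} s_{r_2} \cdots s_{r_\ell}$ is a reduced expression for $w$ and $\ell(w) = \ell = |U_\lam|$, as desired.
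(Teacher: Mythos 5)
Your argument is correct and is essentially the paper's own proof, which consists of the single observation that each step of the central peeling procedure records a descent and then invokes Theorem~\ref{t:core_action}. You have expanded that one-line remark into an explicit induction on the number of boxes (and flagged, without fully carrying out, the escalator/descalator case analysis needed to see that $B_i$ is a removable corner with well-defined residue), but the underlying mechanism is identical.
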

\begin{proof}
At each step, the central peeling procedure records a descent so this follows
from Proposition~\ref{t:core_action}.
\end{proof}

We now present two examples of the central peeling procedure, one in $\tld{C}_3/C_3$ and another in $\tld{D}_4/D_4$.

\begin{example} \label{ex:peeling}
The steps of the central peeling procedure applied to $\lam=\Fc([-11,-9,-1,8,16,18]) \in \tld{C}_3/C_3$ are presented in Figure~\ref{fig:peeling}. The collection of gray boxes tallied during the central peeling procedure is $U_\lam$; Figure~\ref{fig:super} shows $U_\lam$ superimposed over $\lam$.  We can read off the canonical reduced expression $\Fr(\lam)$ starting in the center of $\lam$ and working our way up, reading the gray boxes from right to left.  We have $\Fr(\lam)=s_0s_1s_0s_3s_2s_1s_0s_2s_3s_2s_1s_0s_2s_3s_2s_1s_0$.
\end{example}

\begin{example} Let $w=[-15, -11, -10, 4, 5, 19, 20, 24]\in\tld{D}_4/D_4$. The steps of the central peeling procedure applied to $\lam=\Fc(w)$ are presented in Figure~\ref{fig:peelingD}; notice that no tallied boxes are on the main diagonal or the fourth diagonal. These gray boxes make up $U_\lam$, which in turn is superimposed over $\lam$ in Figure~\ref{fig:superD}.  The canonical reduced expression is $\Fr(\lam)=s_0s_4s_2s_1s_4s_3s_2s_0s_4s_3s_2s_1s_4s_3s_2s_0$.
\end{example}

\clearpage

\begin{figure}[t]
\epsfig{figure=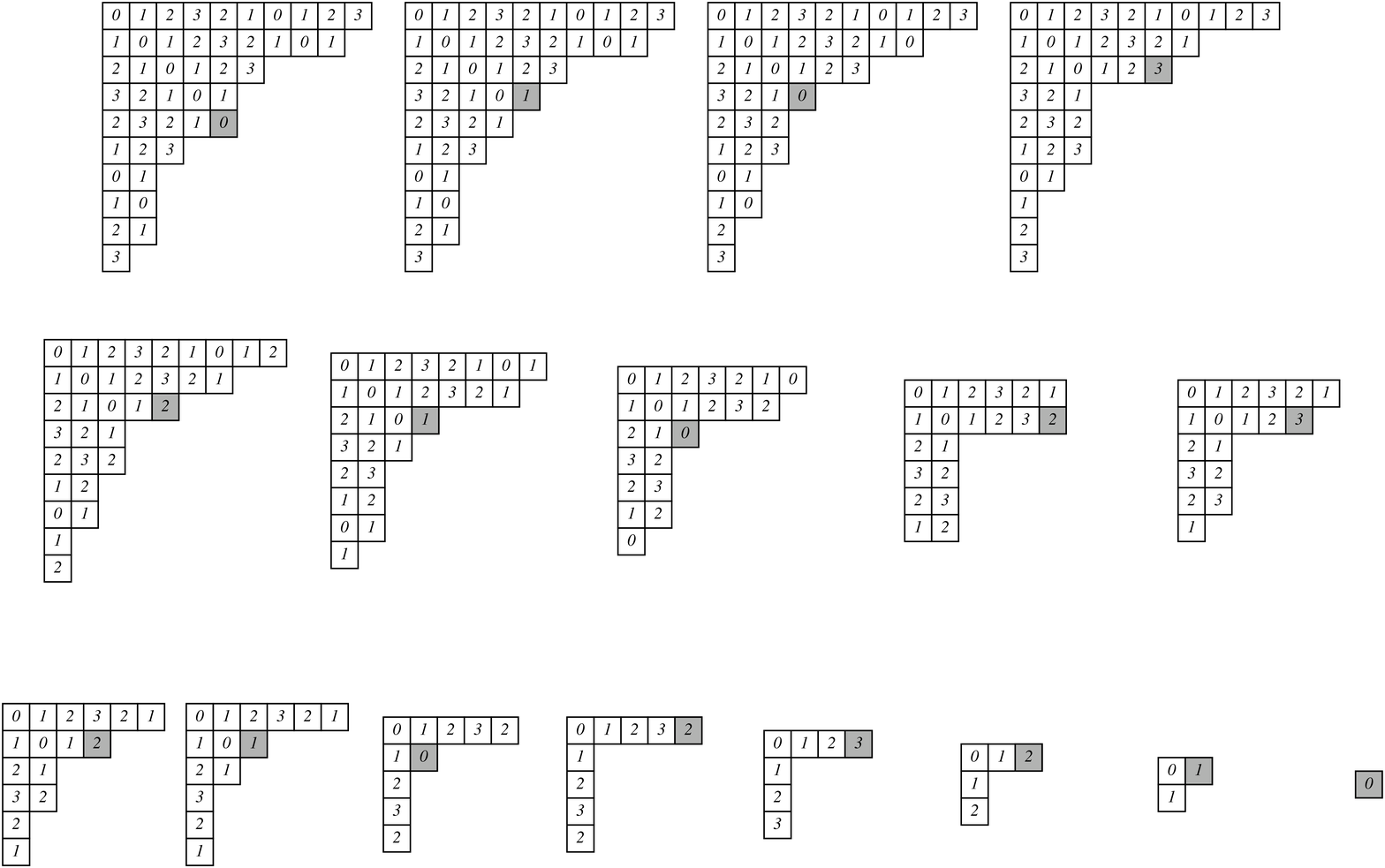, width=6in}
\caption{The application of the central peeling procedure described in Definition~\ref{def:cpp} on the core partition $\lam=\Fc(w)$ for $w=[-11,-9,-1,8,16,18]\in \tld{C}_3/C_3$.  The shaded boxes are the boxes $B_i$ tallied by the algorithm; they make up the upper diagram $U_\lam$, shown in Figure~\ref{fig:super}.}
\label{fig:peeling}
\end{figure}

\begin{figure}[t]
\begin{center}
\epsfig{figure=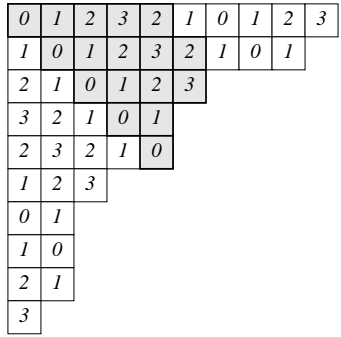, width=1.8in}
\end{center}
\caption{The core partition $\lam=\Fc(w)$ for $w=[-11,-9,-1,8,16,18]\in \tld{C}_3/C_3$, with its upper diagram $U_\lam$ superimposed.  Reading the shaded boxes from the bottom to the top from right to left gives the canonical reduced expression $\Fr(\lam)=s_0s_1s_0s_3s_2s_1s_0s_2s_3s_2s_1s_0s_2s_3s_2s_1s_0$.}
\label{fig:super}
\end{figure}
    
\clearpage

\begin{figure}[t]
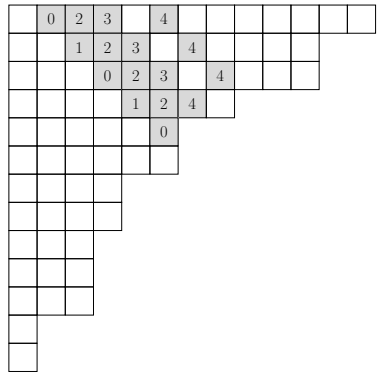

\myscalebox{0.18}{
}
\end{center}
\caption{The core partition $\lam=\Fc(w)$ for $w=[-15, -11, -10, 4, 5, 19, 20, 24]\in \tld{D}_4/D_4$, with its upper diagram $U_\lam$ shaded.  ($U_\lam$ was calculated in Figure~\ref{fig:peelingD}.) Reading the shaded boxes from the bottom to the top from right to left gives the canonical reduced expression  $\Fr(\lam)=s_0s_4s_2s_1s_4s_3s_2s_0s_4s_3s_2s_1s_4s_3s_2s_0$.}
\label{fig:superD}
\end{figure}

\clearpage

\subsection{The bounded diagram}\label{s:bounded_diagram}

We now describe a nonrecursive method to determine the upper diagram of any
core $\lam$.  This method generalizes a bijection of Lapointe and Morse
\cite{LM} in $\tld{A}_n/A_n$.  

We say that a box in $\lambda$ having hook length $< 2n$ is {\bf skew}.  The
skew boxes on any particular row form a contiguous segment lying at the end of
the row.  Consider the collection of boxes $\nu \subset \lam$ defined row by
row as the segment that begins at the box lying on the main diagonal and then
extends to the right for the same number of boxes as the number of skew boxes
in the row.  In $\tld{B}/B$ and $\tld{D}/D$, remove from $\nu$ all boxes along
the main diagonal.
In $\tld{B}/D$ and $\tld{D}/D$, remove from $\nu$ all boxes along the $n$-th
diagonal.  We call this collection of boxes $\nu$ the {\bf bounded diagram} of
$\lambda$, denoted $\widetilde{U}_\lam$.

\begin{theorem}\label{t:upperfromlam}
Fix a symmetric $(2n)$-core partition $\lam$.  Then, the bounded diagram
$\widetilde{U}_\lam$ is equal to the upper diagram $U_{\lam}$.
\end{theorem}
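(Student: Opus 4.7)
The plan is to proceed by induction on the Coxeter length $\ell(\Fw(\lam))$, which by Proposition~\ref{p:peeling} equals the number of boxes in $U_\lam$. The base case $\lam = \emptyset$ is immediate, as both diagrams are empty. For the inductive step, let $d = d^{(1)}$ denote the number of reference-diagonal boxes of $\lam$, let $B$ be the box at the end of row $d$, let $s_r$ be the generator applied by the first step of the central peeling procedure, and write $\lam'$ for the resulting partition. I would reduce the theorem to the twin claims: (a) the box(es) recorded in $U_\lam$ at this step lie in $\widetilde{U}_\lam$, and (b) $\widetilde{U}_{\lam'}$ equals $\widetilde{U}_\lam$ with those same boxes removed. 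Combining (a) and (b) with the inductive hypothesis $U_{\lam'} = \widetilde{U}_{\lam'}$ completes the induction.

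For (a), I would first verify that $B$ is skew by computing its hook length: since $B$ sits at the end of row $d$, its arm length is $0$, and its leg length is controlled by the column length of column $\lam_d$, which by the symmetry $\lam = \lam^t$ equals $\lam_{\lam_d}$. Using the characterization of $d$ as the reference-diagonal count, this hook length can be bounded by $2n-1$. Next, I would confirm that $B$ lies within the row-$d$ segment of $\widetilde{U}_\lam$ prescribed by the number of skew boxes in row $d$; since $B$ is the rightmost skew box of its row, it belongs to $\widetilde{U}_\lam$ unless it sits on one of the diagonals excluded by the definition. These exclusions match exactly the cases when $s_r \in \{s_0^D, s_n^D\}$ removes two boxes from row $d$ but records only the non-diagonal one in $U_\lam$, and the tiebreak convention taking $r = n$ when $B$ is simultaneously $n$- and $(n-1)$-removable is consistent with the exclusion of the $n$-th diagonal in types $\tld{B}/D$ and $\tld{D}/D$.

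For (b), I would use Theorem~\ref{t:core_action} to translate the action of $s_r$ into removals of residue-$r$ boxes. In the generic case $1 \leq r \leq n-1$, $s_r$ removes the single box $B$ from row $d$ and, by the symmetry of $\lam$, a matching box from column $d$; hence the skew segments of $\lam'$ differ from those of $\lam$ only in row $d$ and its symmetric column, shortened by exactly one box. In the boundary cases $s_0^C, s_n^C, s_0^D, s_n^D$, I would verify case by case that the change in the reference-diagonal count $d^{(2)}$ and the possible paired removal are absorbed into the excluded diagonals of $\widetilde{U}$, so that only the recorded box registers as the net change. The main obstacle will be the uniform treatment of the four types together with the escalator and descalator interactions: one must check that the hook lengths in the reflected region (below row $d$ and left of column $d$) degrade in a controlled way, so that no spurious new skew boxes appear in $\widetilde{U}_{\lam'}$ outside $\widetilde{U}_\lam \setminus \{B\}$. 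Careful tracking of how $\lam$ meets each (d)escalator region, pulled back through $\Fc$ to the structural facts in Lemmas~\ref{l:ab_balance} and~\ref{l:balance_up_low}, should handle these cases, but the bookkeeping is where the real work will lie.
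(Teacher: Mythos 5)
Your overall strategy coincides with the paper's: induct on Coxeter length, analyze a single step of the central peeling procedure, and reduce to the two claims that the recorded box lies in $\widetilde{U}_\lam$ and that $\widetilde{U}_{\lam'}$ equals $\widetilde{U}_\lam$ with the recorded box(es) deleted. That reduction is sound, and your step (a) is in the right spirit, though note that computing the hook length of $B$ alone does not place $B$ in $\widetilde{U}_\lam$: the bounded segment of row $d$ is the skew segment \emph{left-justified to the main diagonal}, so what you actually need (and what the paper proves) is that the entire segment of row $d$ from the reference diagonal out to $B$ is skew.

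The genuine gap is in step (b). You assert that in the generic case $s_r$ ``removes the single box $B$ from row $d$ and \ldots a matching box from column $d$,'' and conclude that the skew segments change only in that row and its transpose. But by Theorem~\ref{t:core_action}, $s_r$ removes \emph{all} removable $r$-components: on the abacus, exchanging runners $r, r+1$ and $2n-r, 2n-r+1$ shifts the lowest bead of an entire runner, which typically deletes the terminal box of several rows lying \emph{above} row $d$ --- exactly the rows that carry $\widetilde{U}_\lam$. Two facts must then be supplied, and they constitute the bulk of the paper's proof: (i) none of these other removed boxes $B'$ lies in $\widetilde{U}_\lam$ (the paper exhibits, for each such $B'$, a non-skew box to its left in the same row); and (ii) for each row above row $d$, the number of skew boxes is unchanged by $s_r$, because the skew box lost at the end of the row is compensated by a formerly non-skew box becoming skew --- verified in the paper by a gap-counting argument on the abacus (a gap is lost at position $b'-1$ but gained at $b'-N+1$, and similarly in the forked cases). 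Your plan instead localizes the residual difficulty to the region below row $d$ and left of column $d$, but that region is disjoint from the bounded diagram in every type; the real work lives in the rows above $B$. Without (i) and (ii) the identity $\widetilde{U}_{\lam'} = \widetilde{U}_\lam \setminus \{B\}$ is unproved and the induction does not close.
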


The proof of this theorem is postponed to Section~\ref{s:upper_partition}.

\begin{example}
For the minimal length coset representative $w=[-11,-9,-1,8,16,18] \in \tld{C}_3/C_3$ and its corresponding core partition $\lam=\Fc(w)$, a visualization of the construction of $\tld{U}_\lam$ is given in Figure~\ref{fig:upper}.  This agrees with $U_\lam$ as found in Figure~\ref{fig:super}.

\begin{figure}[h]
\begin{center}
\epsfig{figure=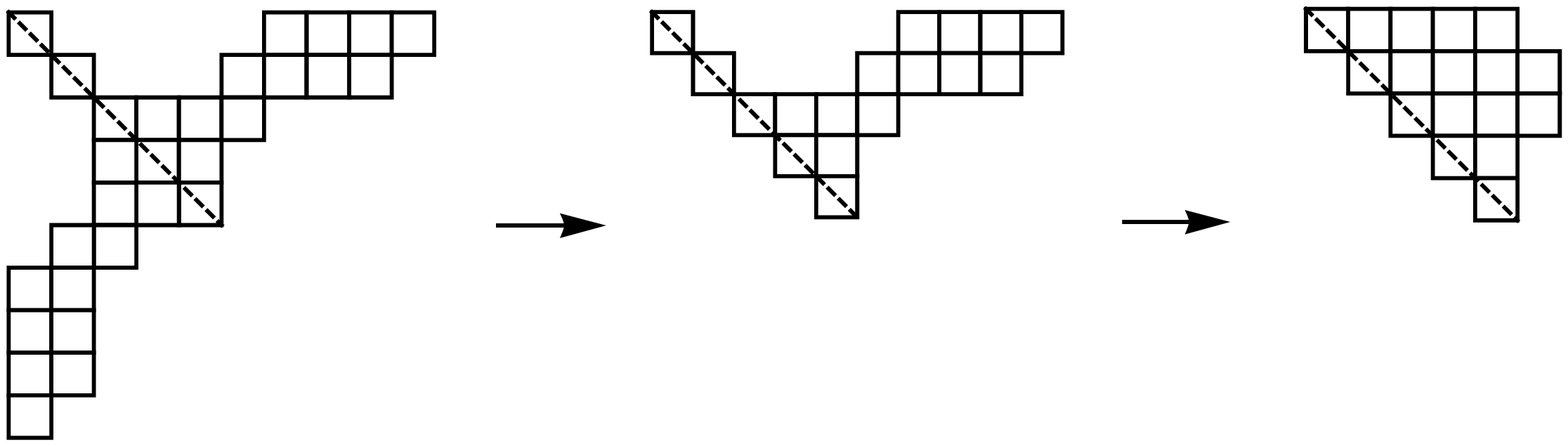, width=6in}
\end{center}
\caption{For $w=[-11,-9,-1,8,16,18]\in\tld{C}_3/C_3$, we left-justify the skew
boxes to the boxes on the main diagonal to find the bounded diagram. }
\label{fig:upper}
\end{figure}
\end{example}

\bigskip
\section{Bounded partitions}\label{s:bounded}

If we left-adjust the boxes of the upper diagram along with the residues they
contain, we obtain a structure that has appeared in other contexts including:
Young walls in crystal bases of quantum groups \cite{HK}, Eriksson and
Eriksson's partitions in \cite{Erik2}, the affine partitions of Billey and
Mitchell \cite{billey--mitchell}, and the $k$-bounded partitions of Lapointe and
Morse \cite{LM}.  Lam, Schilling and Shimozono \cite{lam--schilling--shimozono}
develop combinatorics analogous to the $k$-bounded partitions for type
$\widetilde{C}$, and \cite{spon} contains similar constructions for types
$\widetilde{B}$ and $\widetilde{D}$.  In this section, we explain how these
objects are related to the abacus diagrams and core partitions we have
introduced.

\begin{definition}\label{def:bpp}
Given a core $\lam$, apply the central peeling procedure to find the upper
diagram $U_\lam$.  We define the {\bf bounded partition} $\Fb(\lam)$ to be the
partition $\beta$ whose $i$-th part equals the number of boxes in row $i$ of $U_\lam$.  

In $\tld{B}/D$ (and $\tld{D}/D$), if there exists a part of size $n$
($n-1$, respectively), and the last part $\beta_k$ of this size has rightmost box
with residue $n-1$, then adorn $\beta_k$ with a star decoration.
\end{definition}

By construction, the number of boxes in a bounded partition is the Coxeter
length of the corresponding element in $\tld{W}/W$.  Besides being historical,
the motivation for the name ``bounded partition'' is that part sizes in $\Fb(w)$
are bounded in the different types by $2n$, $2n-1$, or $2n-2$, as shown in
Table~\ref{t:bdd}.

\begin{remark}
The need for a decoration in $\tld{B}/D$ and $\tld{D}/D$ arises because it is
possible that two core partitions yield the same bounded partition.  For
example, consider elements $w_1=s_0s_1\cdots s_{n-2}s_{n-1}$ and
$w_2=s_0s_1\cdots s_{n-2}s_n$ of $\tld{B}/D$.  They both have the same Coxeter
length and correspond to a bounded partition with one part of size $n$; however,
these elements are distinct and have different abaci and cores.  We would say
that $\Fb(w_1)=(n^*)$ and $\Fb(w_2)=(n)$.  The reader should interpret the star
as arising from a length $n$ sequence of generators (length $n-1$ in type
$\tld{D}/D$) that ends with $s_{n-1}$ instead of $s_n$.
\end{remark}

Recall that Definition~\ref{def:Fc} gives a correspondence between rows of
$\lam$ and active beads in $\Fa(\lam)$.  Together with
Theorem~\ref{t:upperfromlam}, this presents a method to determine the bounded
partition from an abacus diagram.  

\begin{lemma}\label{l:max}
For a row $r$ of $\lam$ corresponding to an active bead $b>N$ in $\Fa(\lam)$,
the number of skew boxes in row $r$ on or above the main diagonal equals the
number of gaps between $b$ and $\max(b-N,g(b))$, inclusive.
\end{lemma}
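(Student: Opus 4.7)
The plan is to express both sides of the equality directly in terms of abacus labels, using a hook-length formula implicit in Definition~\ref{def:Fc}. First I would show that a box of $\lam$ coming from a bead-gap pair $(b, g)$ with $g < b$ in reading order has hook length $b - g - m$, where $m$ is the number of multiples of $N$ strictly between $g$ and $b$. This follows directly from the boundary-path description: the arm is the number of east-steps (gaps) strictly between $g$ and $b$, the leg is the number of north-steps (beads) strictly between them, and the hook is one more than their sum; since the only integers in the open interval $(g,b)$ absent from the abacus are the multiples of $N$, this total equals $(b - g - 1) - m + 1 = b - g - m$.

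Second, I would translate the skew condition $\mathrm{hook} < 2n = N-1$ into the clean inequality $g > b - N$. The key observation is that, since $b$ is not a multiple of $N$, the interval $[b - N + 1, b]$ consists of $N$ consecutive integers containing exactly one multiple of $N$, hence exactly $N - 1$ abacus entries. So if $g > b - N$ then $[g, b] \subseteq [b - N + 1, b]$ has at most $N - 1$ entries and hook $\leq N - 2$, while if $g \leq b - N$ then $[g, b] \supseteq [b - N, b]$ has at least $N$ entries and hook $\geq N - 1$. Moreover the flush condition on $\Fa(\lam)$ forces $b - N$ itself to be a bead (it sits above $b$ on the same runner), so among gaps the conditions $g > b - N$ and $g \geq b - N$ describe the same set.

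Third, I would identify the boxes on or above the main diagonal. By Definition~\ref{def:Fc} the boxes in row $r$ correspond left-to-right to the gaps less than $b$ in increasing order of label, and as noted just after that definition the main-diagonal box $(r, r)$ corresponds to the symmetric gap $g(b) = 2N - b$. Hence the boxes in row $r$ at columns $\geq r$ are exactly those whose gaps satisfy $g \geq g(b)$. Combining with the skew condition, a box in row $r$ is simultaneously skew and on or above the main diagonal if and only if its gap $g$ lies in $[\max(g(b),\, b - N + 1),\, b)$; since both $b - N$ and $b$ are beads, the count equals the number of gaps in the closed interval $[\max(b - N, g(b)),\, b]$, exactly as claimed. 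The main subtlety is the bookkeeping around positions at multiples of $N$ (which are absent from the abacus) together with the flush condition at $b - N$; once those are handled, the crisp equivalence ``skew iff $g > b - N$'' matches the two sides without off-by-one errors.
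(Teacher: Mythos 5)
Your proof is correct and follows essentially the same route as the paper's: identify the skew boxes of row $r$ with the gaps between $b-N$ and $b$, and the main-diagonal box with the symmetric gap $g(b)=2N-b$. The paper simply asserts these two facts and says ``the result follows,'' whereas you verify them carefully via the hook-length formula $b-g-m$ and the flushness of $b-N$; the extra bookkeeping is accurate and closes the off-by-one issues the paper leaves implicit.
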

\begin{proof}
By definition~\ref{def:Fc}, the skew boxes on a row corresponding to an active
bead $b$ are the gaps between $b-N$ and $b$.  Also, the diagonal box on this
row corresponds to the symmetric gap $g(b)$, and the result follows.
\end{proof}

Define {\bf offsets} $x_0$ and $x_n$ corresponding to the occurrence of
generators $s_0^D$ and $s_n^D$ as follows.  These offsets record whether a fork
occurs on the left or right sides of the Coxeter graph, respectively.
$$x_0= 
\begin{cases}
0 & \textup{in types $\tld{C}/C$ and $\tld{B}/D$} \\
-1 & \textup{in types $\tld{B}/B$ and $\tld{D}/D$}. \\
\end{cases} \qquad 
x_n= 
\begin{cases}
0 & \textup{in types $\tld{C}/C$ and $\tld{B}/B$} \\
-1 & \textup{in types $\tld{B}/D$ and $\tld{D}/D$}. \\
\end{cases} 
$$

\begin{proposition}
\label{p:boundedfromabacus}
Given an abacus diagram $a$, the bounded partition $\Fb(a)$ is found by creating one part for each bead $b>N$ in descending order, as follows:
\begin{itemize}
\item For each bead $b>N+n$, create a part of size equal to the number of gaps
    between $b$ and $b-N$ plus $1+x_0+x_n$.
\item For each bead $N+1\leq b\leq N+n$, create a part of size equal to $b-N$
    plus $x_0$.    If $b=N+n$ in types $\tld{B}/D$ and $\tld{D}/D$, then star the resulting part.
\end{itemize}
\end{proposition}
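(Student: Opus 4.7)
The plan is to use Theorem~\ref{t:upperfromlam} to equate $U_\lam$ with the bounded diagram $\tld{U}_\lam$, and then to compute the size of each row of $\tld{U}_\lam$ directly from the abacus.  A row of $\lam = \Fc(a)$ contributes a nonempty row to $\tld{U}_\lam$ exactly when it contains a main-diagonal box, and by Definition~\ref{def:Fc} such a main-diagonal box is the box associated to the bead-gap pair $(b, g(b))$; the symmetric gap $g(b) = 2N - b$ exists by Lemma~\ref{l:ab_balance}, and $g(b) < b$ holds iff $b > N$.  Since larger beads correspond to longer (smaller-index) rows of $\lam$, iterating over the active beads $b > N$ in descending order produces the parts of $\Fb(\lam)$ in descending order, so it suffices to check that the proposition's formula computes the number of boxes in the row of $\tld{U}_\lam$ indexed by each such $b$.

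I would express the number of skew boxes of row $i$ in abacus terms.  The $\lam_i$ boxes of row $i$ are in bijection with the $\lam_i$ gaps preceding $b$ in reading order, and under this bijection a box is skew (hook length less than $2n$) precisely when its associated gap lies in the open interval $(b - N, b)$.  Hence the number of skew boxes in row $i$ equals the number of gaps in $(b - N, b)$.

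Next I would analyze the segment $(i, i), (i, i+1), \ldots, (i, i + s)$ appearing in $\tld{U}_\lam$, where $s$ is the number of skew boxes in row $i$.  When $b > N + n$, one has $b > 3N/2$, so $g(b) < b - N$; Lemma~\ref{l:max} then gives $\max(b - N, g(b)) = b - N$, which means every skew box of the row is on or above the main diagonal, and since $g(b)$ itself is a gap in $[g(b), b - N]$ we get $\lam_i - i + 1 = (\text{number of gaps in } [g(b), b]) \geq 1 + s$, so the segment fits inside $\lam$ and contributes $1 + s$ boxes.  When $N + 1 \leq b \leq N + n$, instead $b < 2N$ and $g(b) \geq b - N$, so $[g(b), b]$ is symmetric about $N$ with a single skip at $N$; pairing positions $p \leftrightarrow 2N - p$ and applying Lemma~\ref{l:ab_balance} gives exactly one gap per non-skip symmetric pair and hence $b - N$ gaps in $[g(b), b]$, so $\lam_i - i + 1 = b - N$ and the (overflowing) segment intersects $\lam$ in exactly $b - N$ boxes.

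The type-dependent offsets then arise automatically.  The value $x_0 = -1$ in $\tld{B}/B$ and $\tld{D}/D$ accounts for the removal of the main-diagonal box $(i, i)$, and $x_n = -1$ in $\tld{B}/D$ and $\tld{D}/D$ accounts for the removal of the $n$-th diagonal box $(i, i + n)$ whenever it appears in the segment.  In the case $b \leq N + n$ the segment stops at column $i + (b - N) - 1 \leq i + n - 1$, so $x_n$ does not contribute; in the case $b > N + n$ the $n$-th diagonal box lies in the segment iff $s \geq n$, and verifying this inequality is the main obstacle.  I would establish it by writing $b = \ell N + r$ and checking that for each symmetric runner pair $(r', N - r') \neq (r, N - r)$, the balance condition $L_{r'} + L_{N-r'} = 0$ is incompatible with both of its in-range positions in $(b - N, b)$ being beads; summing over the $n - 1$ other pairs then yields at most $n - 1$ beads and hence at least $n$ gaps among the $2n - 1$ non-skip positions of $(b - N, b)$.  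Finally, for $b = N + n$ in $\tld{B}/D$ and $\tld{D}/D$, the rightmost retained box of the row is $(i, i + n - 1)$, which intersects its upper escalator in a single box whose residue is $n - 1$ (per Figure~\ref{f:calator}(a)), matching the star condition in Definition~\ref{def:bpp}.
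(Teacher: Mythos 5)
Your proposal follows essentially the same route as the paper's proof: reduce to the bounded diagram via Theorem~\ref{t:upperfromlam}, match rows of $\lam$ with beads $b>N$ and skew boxes with gaps in the interval between $b-N$ and $b$ (this is exactly the content of Lemma~\ref{l:max}), split into the cases $b>N+n$ and $N+1\leq b\leq N+n$, and account for the offsets and the star. The published proof is a much terser version of the same argument, so the only substantive difference is that you attempt to justify a step the paper passes over silently: that for $b>N+n$ the segment always reaches the $n$-th diagonal, so that the offset $x_n$ is legitimately applied to every such part.

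On that step your count does not quite close. The $2n-1$ entries of $(b-N,b)$ consist of two entries from each of the $n-1$ symmetric runner pairs other than $\{b\p N,\ N-(b\p N)\}$, plus one unpaired entry $p^*$ on runner $N-(b\p N)$. Your pairing argument correctly shows that each of the $n-1$ pairs contributes at most one bead (if both positions $p',p''$ of a pair were beads, their levels would sum to at most $0$, forcing $p'+p''\leq N$ and hence $b<3N/2$, contradicting $b>N+n$), but this only bounds the beads among $2n-2$ of the positions by $n-1$; if $p^*$ were a bead you would obtain only $n-1$ guaranteed gaps, not $n$. You need the additional observation that $p^*$ is always a gap: writing $p^*=kN-b$ with $k=\lfloor 2b/N\rfloor\geq 2$, its partner $2N-p^*=b-(k-2)N$ lies weakly above $b$ on $b$'s runner and is therefore a bead by flushness, so $p^*$ is a gap by Lemma~\ref{l:ab_balance}. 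With that supplement the inequality $s\geq n$ holds and the rest of your argument goes through.
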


\begin{proof}
When $b>N+n$, the number of skew boxes equals the number of gaps between $b$ and
$b-N$ by Lemma~\ref{l:max}.  We add one for the main diagonal, and apply offsets
for the main diagonal and $n$-th diagonal.  When $N+1\leq b\leq N+n$, the number
of boxes in row $r$ equals the number of gaps between $b$ and $g(b)$ by
Lemma~\ref{l:max}, which equals $b-N$ by Lemma~\ref{l:ab_balance}.  None of
these parts enter the $n$-th diagonal, so we only apply offset $x_0$.  Starring
conditions follow from Definition~\ref{def:bpp}.
\end{proof}

\begin{table}[t]
\begin{tabular}{|p{0.3in}|p{5.5in}|}
\hline
Type & Bounded partition structure \\ 
\hline
$\widetilde{C}/C$ & parts with size $\leq 2n$, where parts of size $1, \ldots,
n$ may occur at most once. \\ \hline
$\widetilde{B}/B$ & parts with size $\leq 2n-1$, where parts of size $1, \ldots,
n-1$ may occur at most once. \\ \hline
$\widetilde{B}/D$ & parts with size $\leq 2n-1$, where parts of size $1, \ldots,
n-1$ may occur at most once, and one of the parts of size $n$ may be starred.
\\ \hline
$\widetilde{D}/D$ & parts with size $\leq 2n-2$, where parts of size $1, \ldots,
n-2$ may occur at most once, and one of the parts of size $n-1$ may be starred.
\\ \hline
\end{tabular}
\caption{Bounded partitions of $\widetilde{W}/W$}\label{t:bdd}
\end{table}

We are now in a position to prove the main result of this section.

\begin{theorem}
\label{t:isbounded}
The map $\Fb$ is a bijection of $\widetilde{W}/W$ onto the partitions described in Table~\ref{t:bdd}.
\end{theorem}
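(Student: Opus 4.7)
The plan is to invoke Proposition~\ref{p:boundedfromabacus}, which expresses $\Fb$ explicitly in terms of the abacus $\Fa(w)$. Since $\Fa$ is already a bijection from $\widetilde{W}/W$ onto the abaci described in Table~\ref{t:to_abaci} (Lemma~\ref{l:ab_bij}), it suffices to prove that the induced map from those abaci onto the bounded partitions of Table~\ref{t:bdd} is a bijection.

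First, I would verify that the formula in Proposition~\ref{p:boundedfromabacus} always outputs a partition of the stated form. Weak decreasingness of the parts follows from Lemma~\ref{l:max} together with the weak decreasingness of the rows of $\Fc(a)$; the upper bound on part size follows by counting the at most $2n-1$ abacus positions in the open interval $(b-N,b)$ for a large bead and the $n$ candidate small-bead positions $N+1,\ldots,N+n$. The uniqueness of small parts is because each of those $n$ positions can house at most one bead, yielding distinct sizes $1+x_0,\ldots,n+x_0$; the starring convention in types $\widetilde{B}/D$ and $\widetilde{D}/D$ is consistent with Definition~\ref{def:bpp} since only the bead at $N+n$ receives a star.

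Next, I would construct an inverse map from each bounded partition $\beta$ to an abacus. The small parts of $\beta$ (and the starred part, if present) uniquely determine beads at positions in $\{N+1,\ldots,N+n\}$ via the formula $b = N + s - x_0$, and by the balance condition these choices simultaneously fix the lowest beads on the conjugate runners $N-i$. The remaining large parts are processed inductively in weakly decreasing order: given the partial configuration built so far, the next large part of size $s$ forces a bead at the unique position $b > N+n$ for which the gap count in $(b-N,b)$ equals $s - 1 - x_0 - x_n$ and for which the required pair of beads on conjugate runners extends the configuration to a consistent balanced flush abacus.

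The main obstacle will be verifying well-definedness of this greedy inverse: I must show that at every step the required bead position exists, is unique, and (in types $\widetilde{B}/B$ and $\widetilde{D}/D$) produces an even abacus. The parity condition can be reduced to showing that the constraints in Table~\ref{t:bdd} translate, under the inverse construction, into precisely the parity condition on root lattice coordinates in Table~\ref{t:to_rlc}. Carrying out the existence and uniqueness analysis type by type, using the Balance Lemma~\ref{l:ol_balance} and Lemma~\ref{l:ab_balance} to control how adding a bead modifies positions on conjugate runners, completes the verification.
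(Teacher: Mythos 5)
Your high-level strategy---reduce to abaci via Lemma~\ref{l:ab_bij} and invert the explicit description of $\Fb$ in Proposition~\ref{p:boundedfromabacus}---is the same as the paper's, and your forward-direction checks (the $2n-1$ available positions in the open interval $(b-N,b)$ bounding the part sizes, distinctness of the small parts) are sound. The gap is in the inverse construction. You process the large parts in weakly \emph{decreasing} order and select, at each step, ``the unique position $b>N+n$ for which the gap count in $(b-N,b)$ equals $s-1-x_0-x_n$.'' This rule is circular: the gap count that characterizes $b$ is the one in the \emph{finished} abacus, but beads for the smaller large parts have not yet been inserted and may still land inside $(b-N,b)$, so the count you match against in the partial configuration is inflated by an amount you cannot know in advance; moreover, when $\beta$ has repeated large parts there are several positions with the same final gap count, so uniqueness fails as well. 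The paper inserts the large beads in \emph{increasing} order of part size, so each new bead is placed after every previously placed bead in reading order and no later insertion disturbs its interval $(b-N,b)$; concretely, the bead for $\beta_i$ goes in the $(\beta_i-\beta_{i+1}+1)$-th ``possible position'' past the current lowest bead, which makes existence and uniqueness immediate.

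The second missing idea concerns evenness in types $\tld{B}/B$ and $\tld{D}/D$. There the position $N+1$ carries a part of size $1+x_0=0$, i.e., it is invisible in $\beta$, and your rule $b=N+s-x_0$ never places a bead there. Consequently, for a bounded partition with an odd number of parts your construction yields an abacus with an odd number of beads past $N$, which is not even and hence not in the image of $\Fa$ for these types. The paper's proof uses precisely this free bit: it places a bead at $N+1$ if and only if the total number of parts $K+k$ is odd. Your proposed reduction to the parity condition of Table~\ref{t:to_rlc} gestures at the right equivalence but does not supply this mechanism, without which the inverse map is not well defined on all of the partitions in Table~\ref{t:bdd}.
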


\begin{proof}
We present the inverse map to $\Fb$ as described in
Proposition~\ref{p:boundedfromabacus}.  This inverse takes a bounded partition
$\beta$ and gives an abacus $a=\Fa(\beta)$.  We write
$\beta=(\beta_1,\hdots,\beta_{K},\beta_{K+1},\hdots,\beta_{K+k})$, where each
part $\beta_i\geq n+1+x_0+x_n$ for $1\leq i\leq K$ (and unstarred if
applicable), and $\beta_i\leq n+x_0$ for $K+1\leq i\leq K+k$ (where $n+x_0$ is
starred in types $\tld{B}/D$ and $\tld{D}/D$).

For each part $\beta_{K+i}$ for $1\leq i\leq k$, place a bead in position $N+\beta_{K+i}-x_0$. In types $\tld{B}/B$ and $\tld{D}/D$, if $K+k$ is odd, place a bead in position $N+1$.  After this, if position $N+j$ for $1\leq j\leq n$ is a gap, place a bead in position $N-j$. 

We now insert beads into the abacus one at time for parts $\beta_K$ through $\beta_1$ in this order. For each part, we consider the possible positions for placing the next bead to be the positions in reading order after the current lowest bead that have a bead one level above.  For part $\beta_K$ of size $n+j+x_0+x_n$, we place a bead in the $j$-th possible position.  For a part $\beta_i$ for $i<K$, place a bead $b$ in the $\beta_i-\beta_{i+1}+1$-th possible position.   By this construction, the number of gaps between $b$ and $b-N$ is exactly $\beta_i+1$.

Once we have finished placing beads as described, fill in beads above $n+1$ as necessary to make the abacus balanced and flush.

Observe that the abacus enforces the structural conditions given in Table~\ref{t:bdd}.
\end{proof}

It is natural to fill the boxes of the bounded partitions with the residues
present in the corresponding boxes of the upper diagrams before left adjusting.
As these residues are inherited from the residues in the core partition, they
follow a predictable pattern.

\begin{proposition}
Fix a bounded partition $\beta = \Fb(w)$ and fill the boxes of $\beta$ with
residues as described in Table~\ref{t:residues}.  The canonical reduced
expression $\Fr(w)$ is obtained by reading these residues from $\beta$, right
to left in rows and from bottom to top. 
\label{p:pattern}
\end{proposition}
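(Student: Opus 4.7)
The plan is to reduce the statement to a row-by-row residue calculation on the upper diagram $U_\lambda$ and then read off the reduced expression using Proposition~\ref{p:peeling}. First I would invoke Theorem~\ref{t:upperfromlam} so that the bounded partition $\beta=\Fb(\lambda)$ may be identified with the left-adjustment of $U_\lambda=\widetilde{U}_\lambda$. Under this identification, the $i$-th row of $\beta$ is in bijection with the skew boxes in the $i$-th row of $\lambda$ that lie on or to the right of the reference diagonal (with main-diagonal and $n$-th-diagonal adjustments in types $\widetilde{B}/B$, $\widetilde{D}/D$, $\widetilde{B}/D$, $\widetilde{D}/D$ built into the offsets $x_0,x_n$ of Proposition~\ref{p:boundedfromabacus}). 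Since Proposition~\ref{p:peeling} already guarantees that reading the boxes of $U_\lambda$ right-to-left in each row, proceeding from the bottom row upwards, yields $\Fr(w)$, it suffices to show that the residues recorded at the boxes of $U_\lambda$ agree with the entries prescribed by Table~\ref{t:residues} on the corresponding boxes of $\beta$.

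To verify this residue agreement, I would track what the central peeling procedure does row by row. At step $i$ the procedure acts on the $d^{(i)}$-th row of a core $\lambda^{(i)}$ and removes a suffix of skew boxes by repeatedly applying the generator $s_{r_i}$ whose residue sits at the rightmost box of that row. Since the removal is by a single component whose boxes lie in a single row, the residues along that row of $\lambda^{(i)}$ must form a \emph{consecutive} strip $r_i,r_i{-}1,\ldots$ reading right-to-left, governed by the pattern of fixed residues $\res(i,j)$ modulo the local corrections coming from escalators, descalators and the main descalator. The key observation is that left-adjusting a row of $U_\lambda$ shifts all its boxes by a common horizontal translate, so the relative residue sequence along the row is preserved. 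Consequently, the residue filling that Table~\ref{t:residues} assigns to row $i$ of $\beta$ is exactly the sequence $r_i,r_i{-}1,r_i{-}2,\ldots$ (with residue-$0$ and residue-$n$ adjustments supplied by the offsets $x_0,x_n$) that the peeling records while emptying the $d^{(i)}$-th row.

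The main obstacle is handling the three special regions where residues are not given simply by $\res(i,j)$: the main descalator in types $\widetilde{B}/B$ and $\widetilde{D}/D$, the escalators centered on the $n$-th diagonal in types $\widetilde{B}/D$ and $\widetilde{D}/D$, and the descalators about the antidiagonal position $2n$. In each of these cases a row of $U_\lambda$ may simultaneously contain one or two boxes lying inside such a region, and the residue assigned there by Definition~\ref{d:residue} depends on how the row meets the region (as in Figure~\ref{f:calator}). I would treat these cases separately: compare Definition~\ref{def:cpp}, which prescribes taking $r_i=n$ when both $s_n$ and $s_{n-1}$ are removable and records the box \emph{off} the $n$-th or $2n$-th diagonal, with the star and offset conventions in Definition~\ref{def:bpp} and Proposition~\ref{p:boundedfromabacus}. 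A short case analysis, using the Balance Lemma~\ref{l:ol_balance} to pair the upper and lower occurrences, shows that exactly the residues listed in Table~\ref{t:residues} appear on the retained boxes, and that a starred final part in types $\widetilde{B}/D$ and $\widetilde{D}/D$ corresponds precisely to the peeling terminating with $s_{n-1}$ rather than $s_n$.

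Combining these ingredients, reading the residue-filled boxes of $\beta$ right-to-left within each row and bottom-to-top across rows reproduces the sequence $r_\ell,r_{\ell-1},\ldots,r_1$ of generators recorded by the central peeling procedure, which by Proposition~\ref{p:peeling} is a reduced expression for $\Fw(\beta)=w$. This is exactly $\Fr(w)$, completing the proof.
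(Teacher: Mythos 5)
Your argument follows the same route as the paper, whose entire proof is the one-line citation of Definition~\ref{def:bpp} and Theorem~\ref{t:upperfromlam}: identify $\beta$ with the left-adjusted upper diagram, inherit the reading order from the central peeling procedure, and observe that left-adjusting rows that all begin at the reference diagonal makes the inherited residues depend only on the column, matching Table~\ref{t:residues}. You simply spell out the verification (including the escalator/descalator and starring cases) that the paper leaves implicit, so the proposal is correct and essentially identical in approach.
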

\begin{proof}
This follows from Definition~\ref{def:bpp} and Theorem~\ref{t:upperfromlam}.
\end{proof}

\begin{table}[t]
\begin{tabular}{|p{0.3in}|p{5.5in}|}
\hline
Type  & Residue fillings \\ 
\hline
$\widetilde{C}/C$ &Fill all boxes in column $1\leq i\leq n+1$ with residue $i-1$ and all boxes in column $n+2\leq i\leq 2n$ with residue $2n+1-i$.
\\ \hline
$\widetilde{B}/B$ &Fill all boxes in columns $1$ and $2n-1$ with residues $0$ and $1$ alternating, starting at the top with $0$.  Fill all boxes in column $2\leq i\leq n$ with residue $i$ and all boxes in column $n+1 \leq i \leq 2n-2$ with residue $2n-i$.
\\ \hline
$\widetilde{B}/D$ & Fill all boxes in column $1\leq i\leq n-1$ with residue $i-1$, all boxes in column $n+1$ with residue $n$, and all boxes in column $n+2\leq i\leq 2n-1$ with residue $2n-i$.  In the last row of length $n$ (if one exists), if the part is starred, fill the box in column $n$ with residue $n-1$; otherwise fill it with residue $n$.  In other rows of length $n$, place residues $n$ and $n-1$ alternating up from there.  Fill all other boxes in column $n$ with residue $n-1$.\\ \hline
$\widetilde{D}/D$ & Fill all boxes in columns $1$ and $2n-2$ with residues $0$ and $1$ alternating, starting with $0$ at the top.  Fill all boxes in column $2\leq i\leq n-2$ with residue $i$, all boxes in column $n$ with residue $n$, and all boxes in column  $n+1 \leq i \leq 2n-3$ with residue $2n-i-1$.  In the last row of length $n-1$ (if one exists), if the part is starred, fill the box in column $n-1$ with residue $n-1$; otherwise fill it with residue $n$.  In other rows of length $n-1$, place residues $n$ and $n-1$ alternating up from there.  Fill all other boxes in column $n-1$ with residue $n-1$.
\\ \hline
\end{tabular}
\caption{Fillings of bounded partitions}\label{t:residues}
\end{table}

\begin{example}
The bounded partition $\beta=(8,8,5,5,5,4,2)$ could be a member of any one of
$\tld{B}_5/B_5$, $\tld{B}_5/D_5$, or $\tld{D}_5/D_5$.  The
different fillings given by Proposition~\ref{p:pattern} are shown in
Figure~\ref{fig:fillings}.  Since no part of $\beta$ is starred, the last row of
length $5$ in $\tld{B}_5/D_5$ contains a residue of $5$ instead of a $4$, and
the residues in this column are determined from there.  Similarly, the row of
length $4$ in $\tld{D}_5/D_5$ contains a residue of $5$.  To read the reduced
expression for $\Fr(\beta)$, read from right to left in rows from bottom to top;
for example in $\tld{D}_5/D_5$ we would have
$$\Fr(\beta)=s_2s_0s_5s_3s_2s_1s_5s_4s_3s_2s_0s_5s_4s_3s_2s_1s_5s_4s_3s_2s_0s_1s_2s_3s_5s_4s_3s_2s_1s_0s_2s_3s_5s_4s_3s_2s_0.$$
\begin{figure}[h]
\begin{tabular}{ccc}
\epsfig{figure=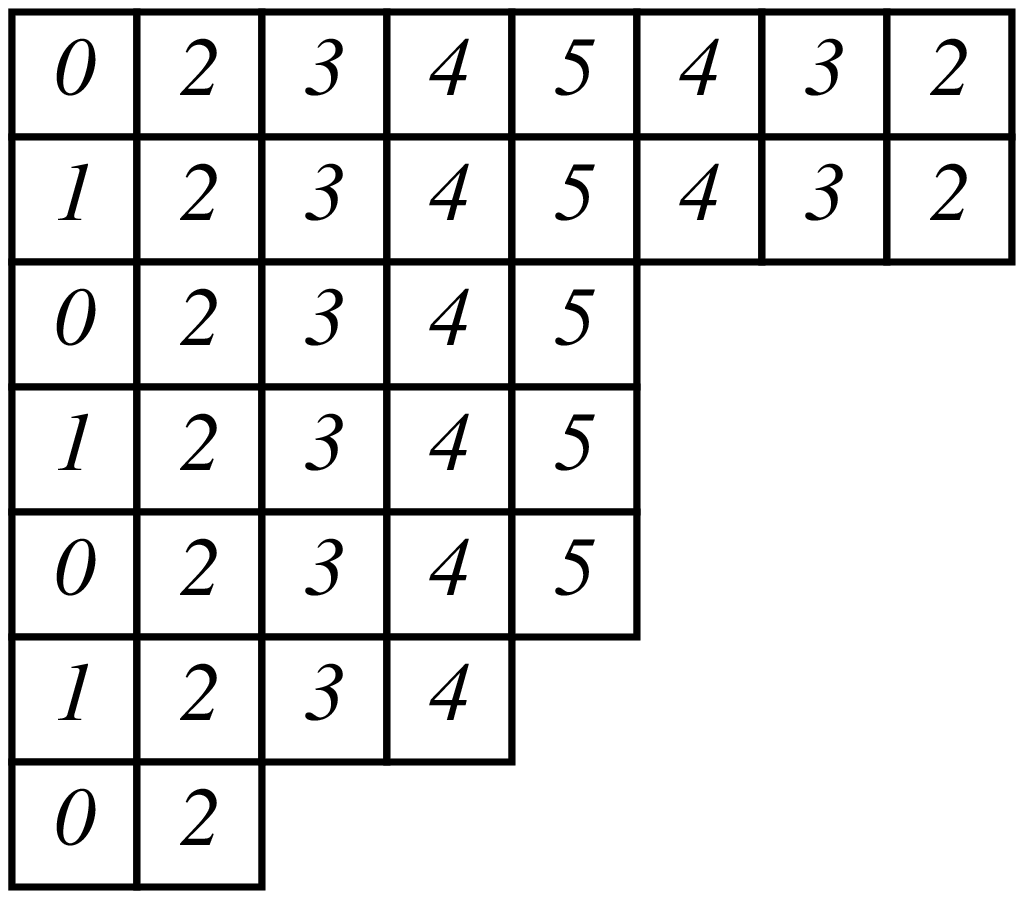,width=1.4in} &
\epsfig{figure=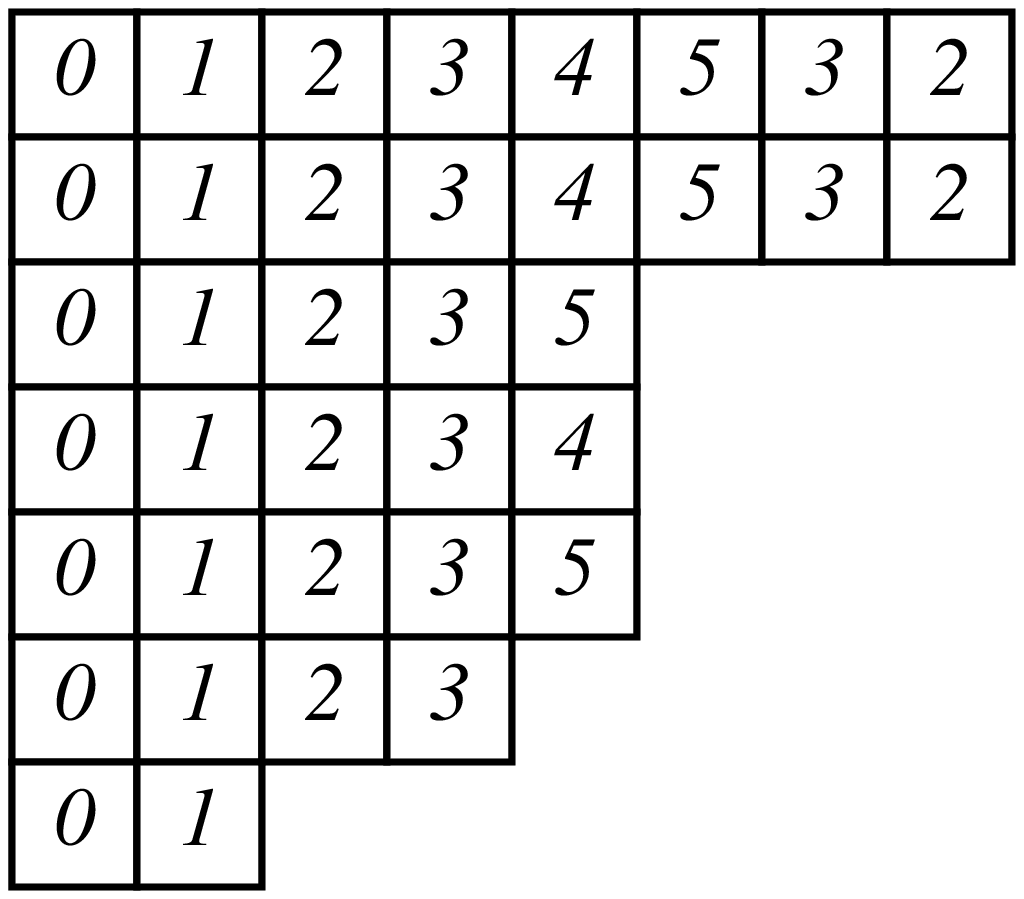,width=1.4in} &
\epsfig{figure=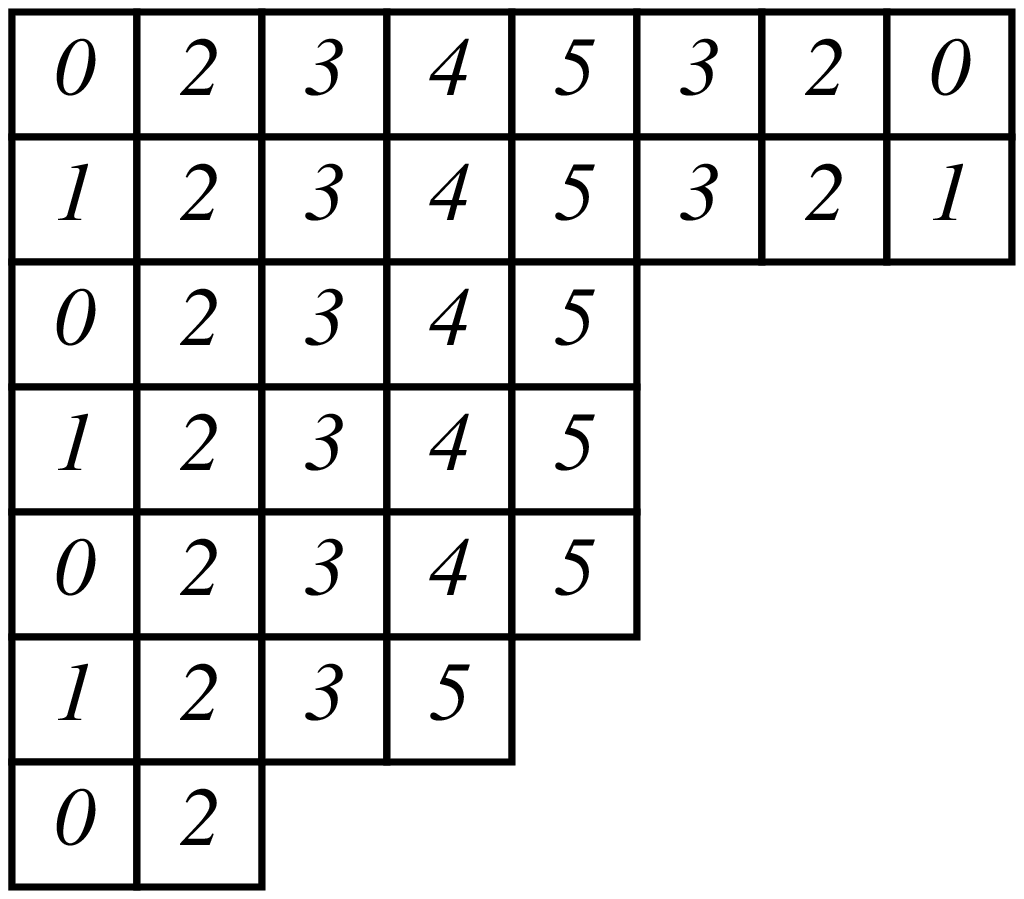,width=1.4in}
\end{tabular}
\caption{Example of the bounded partition $\beta=(8,8,5,5,5,4,2)$ and the residues
which populate $\beta$ when $\beta$ represents an element of $\tld{B}_5/B_5$, $\tld{B}_5/D_5$, and $\tld{D}_5/D_5$,
respectively.}\label{fig:fillings}
\end{figure}
\end{example}

\section{Coxeter length formulas}\label{s:coxeter_length_formulas}

If we sum contributions to the Coxeter length from beads in the same runners in
Proposition~\ref{p:boundedfromabacus}, we obtain the following formula for the
Coxeter length of a minimal length coset representative defined by way of an
abacus.  

\begin{corollary}\label{p:e_cox}
Let $a$ be the abacus corresponding to a minimal length coset representative $w\in\tld{W}/W$.  For $1\leq i\leq n$, choose the lowest bead $B(i)$ occurring in either runner $i$ or $N-i$, and the bead $n+1\leq b(i)\leq N+n$ occurring in $B(i)$'s runner.  Let $g_i$ be the number of gaps between $B(i)$ and $b(i)$ in $a$.  Then 
$$\ell(w)=\sum_{1\leq i\leq n} g_i+(1+x_0+x_n)\big(\textup{$\#$ of beads $>N+n$}\big) + \sum_{N+1\leq b\leq N+n} (b-N+x_0).$$
\end{corollary}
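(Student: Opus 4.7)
The plan is to begin from Proposition~\ref{p:boundedfromabacus} and add up the parts of the bounded partition $\Fb(w)$: since the central peeling procedure records one descent per box, the total number of boxes equals $\ell(w)$. The two terms of the part-size formulas immediately produce the third summand $\sum_{N+1\leq b\leq N+n}(b-N+x_0)$ and the middle factor $(1+x_0+x_n)\,(\#\{\text{beads}>N+n\})$; what remains is the identity
\[
\sum_{\text{beads }b>N+n} G(b,\, b-N) \;=\; \sum_{i=1}^n g_i,
\]
where $G(x,y)$ denotes the number of gaps strictly between $x$ and $y$ in reading order.

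To prove this, I would group the summands on the left by the pair of runners $\{i, N-i\}$, $1\leq i\leq n$. The Balance Lemma~\ref{l:ol_balance} forces the lowest beads on these two runners to lie at opposite levels, so only one of them---the runner $r_i$ carrying $B(i)$---contributes any bead with label $>N+n$. On this flush runner, any bead $b$ satisfies $b-N=b'$, where $b'$ is the bead one level higher. Iterating the elementary splitting $G(x,z)=G(x,y)+G(y,z)$ (which holds whenever $y$ is a bead lying between $x$ and $z$ in reading order) down the list of beads from $B(i)$ up to but not past $b(i)$, one sees that the sum of $G(b, b-N)$ over beads $b>N+n$ on the pair $\{i, N-i\}$ collapses to $G(B(i), b(i)) = g_i$. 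Summing over $i$ then yields the identity, and hence the length formula.

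The main subtlety is verifying that $b(i)$ truly starts the telescope at the right place in each of the three cases distinguished by the sign of the level of the lowest bead on runner $i$. When this level is strictly positive (resp.\ strictly negative), $b(i)$ is the level-$1$ bead on runner $i$ (resp.\ the level-$0$ bead on runner $N-i$), which is the unique bead in $[n+1, N+n]$ on $r_i$; when the level is zero, $B(i)=b(i)$, the pair contributes no bead $>N+n$, and both sides of the pairwise identity vanish trivially.
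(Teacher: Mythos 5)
Your proposal is correct and follows essentially the same route as the paper, which derives the corollary in one line by ``summing contributions to the Coxeter length from beads in the same runners'' in Proposition~\ref{p:boundedfromabacus}; your grouping of the beads $b>N+n$ by the runner pair $\{i,N-i\}$, the use of the Balance Lemma to see that only the runner carrying $B(i)$ contributes such beads, and the telescoping of the gap counts down a flush runner to $g_i$ is exactly the omitted computation. The case analysis on the sign of the level of the lowest bead, including the degenerate level-zero case where $B(i)=b(i)$, is the right way to make the telescope's endpoints precise.
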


\begin{example}
In our example of $w=[-11,-9,-1,8,16,18]\in\tld{C}_3/C_3$ with abacus
$a=\Fa(w)$ in Figure~\ref{fig:abacus}, we create a part of the bounded
partition $\Fb(w)$ for each bead in $a$ greater than $N=7$.  The beads in
question are $18$, $16$, $11$, $9$, and $8$.  The first three beads are greater
than $7+3$, so we count the number of gaps between $b$ and $b-N$ and add one,
giving $5$, $5$, and $4$.  The last two beads are between $8$ and $10$, so we
take $b-N$, giving $2$ and $1$.  This agrees with the previously found
$\Fb(w)=(5,5,4,2,1)$.  In the terminology of Corollary~\ref{p:e_cox}, $B_1=b_1=8$, $B_2=16$, $b_2=9$, $B_3=18$, and $b_3=4$.  We then calculate $g_1=0$, $g_2=4$, $g_3=7$ and see that $\ell(w)=11+(1)(3)+3=17$, agreeing with the number of generators in $\Fr(w)$ found in Example~\ref{ex:peeling}.
\end{example}

Given a core partition $\lam$, Theorem~\ref{t:upperfromlam} gives a simple method to find the corresponding bounded partition, from which its Coxeter length can be read directly.  It is possible to determine the Coxeter length of the corresponding minimal length coset representative in another way, analogous to the method given in type $\tld{A}_n/A_n$ in Proposition 3.2.8 in \cite{berg-jones-vazirani} by translating Proposition~\ref{p:e_cox} into the language of core partitions.  

If we fill in the boxes of a core along the diagonals with the numbers $1$ through $2n$ instead of the residues, the number at the end of each row corresponds to the runner number of the bead in the abacus which corresponds to that row. This filling allows us to visualize which rows of $\lam$ correspond to beads in the same runner.   Involved in the statement of the proposition below is the determination of the rows $R(i)$ and $r(i)$ of $\lam$ that correspond to the beads $B(i)$ and $b(i)$.

\begin{proposition}\label{p:core_length}
Let $\lam=(\lam_1,\hdots,\lam_k)$ be a symmetric $(2n)$-core partition.  If all parts of $\lam$ are less than or equal to $n$, then $$\ell(\Fw(\lam))=\sum_{1\leq i\leq k} \max(0,\lam_i-i+1+x_0).$$   

Otherwise, define $R(i)$ to be the longest row of $\lam$ to have runner number $i$ or $N-i$ labeling its rightmost box.  Then follow the boundary of $\lam$ extended $n$ steps out in each direction from its center, a path which will involve $n$ vertical steps and $n$ horizontal steps.  Define $r(i)$ to be the unique row of $\lam$ ending with one of the $n$ vertical steps and whose rightmost box is labeled by the same choice of $i$ or $N-i$ as for $R(i)$.  Define $d$ to be the number of rows of $\lam$ whose box on the main diagonal has hook length greater than $2n$.  Then,
\begin{equation*}
\ell\big(\Fw(\lambda)\big) =\sum_{i=1}^{n}
\big(\lambda_{R(i)}-\lam_{r(i)}\big)+(1+x_0+x_i)d+ \sum_{d+1\leq i\leq k} \max(0,\lam_i-i+1+x_0).
\end{equation*}
\end{proposition}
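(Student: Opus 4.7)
The plan is to derive this formula by translating Proposition~\ref{p:e_cox} term-by-term from the abacus language into the language of core partitions, using the bijection $\Fc$ of Definition~\ref{def:Fc}. Recall that under this bijection each row $i$ of $\lambda$ is identified with an active bead $b_i$ of $\Fa(\lambda)$, where $\lambda_i$ equals the number of gaps preceding $b_i$ in reading order, and the runner containing $b_i$ is determined by the residue class modulo $N$ of the position $\lambda_i - i + 1$ (equivalently, the diagonal label of the rightmost box in row $i$). This immediately identifies the ``diagonal labels $1,\dots,2n$'' used in the statement with the runner labels of the abacus.

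With this dictionary in hand, I would verify three correspondences. First, $B(i)$, the lowest bead in runner $i$ or $N-i$, corresponds to the longest row of $\lambda$ whose rightmost box lies on one of these two diagonals; this is precisely $R(i)$ by definition. Similarly, $b(i)$, the unique bead in $B(i)$'s runner lying in the window $[n+1, N+n]$, is the base bead of that runner (at level $0$ or $1$), and I would argue that tracing the boundary of $\lambda$ extended $n$ steps past its center corresponds in the abacus to listing the entries in positions $[n+1, N+n]$; the $n$ vertical steps picked out by this trace correspond to the $n$ base beads, and the unique vertical step on the same runner as $R(i)$'s label identifies $r(i)$. Second, for two beads $b > b'$ on a common runner, the number of gaps strictly between them in reading order equals $\lambda_{\text{row}(b)} - \lambda_{\text{row}(b')}$, since each intervening gap contributes exactly one box to the difference of the two row lengths; applied to $B(i)$ and $b(i)$ this yields $g_i = \lambda_{R(i)} - \lambda_{r(i)}$. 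Third, the condition $b > N+n$ on an active bead translates via Lemma~\ref{l:max} into the condition that the diagonal box of the corresponding row has hook length $> 2n$, and since $\lambda$ is symmetric the hook length of $(i,i)$ equals $2(\lambda_i - i) + 1$, so this is equivalent to $\lambda_i \geq n + i$; hence the count of such beads is $d$, and the middle term in Proposition~\ref{p:e_cox} becomes $(1+x_0+x_n)d$. Finally, for beads $b \in [N+1, N+n]$, which correspond to rows indexed $i \geq d+1$, Lemma~\ref{l:max} combined with Lemma~\ref{l:ab_balance} shows that $b - N$ equals the number of boxes on or above the main diagonal in row $i$, which is $\lambda_i - i + 1$ when $\lambda_i \geq i$ and $0$ otherwise --- so the short-row contribution is exactly $\max(0, \lambda_i - i + 1 + x_0)$ after absorbing the offset.

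The main obstacle will be rigorously justifying the boundary-tracing claim used to pin down $r(i)$: namely, that the $n$ vertical steps of the boundary of $\lambda$ extended $n$ steps past its midpoint correspond bijectively with the beads at positions $[n+1, N+n]$ of $\Fa(\lambda)$, matched by runner (diagonal) label. This should follow from a direct inspection of how the boundary lattice path of $\Fc(a)$ encodes the entries of $a$ in reading order, but the parity bookkeeping around the midpoint deserves care. The special case where all parts of $\lambda$ are at most $n$ follows at once: then $\Fa(\lambda)$ has no beads beyond $N+n$, so $d=0$, the first two sums vanish, and every active bead falls in $[N+1, N+n]$, reducing the general formula to the short-row sum $\sum_i \max(0, \lambda_i - i + 1 + x_0)$ as claimed.
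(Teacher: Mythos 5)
Your proposal is correct and follows essentially the same route as the paper: both proofs obtain the formula by translating Corollary~\ref{p:e_cox} term by term through the abacus--core dictionary, identifying $B(i)$, $b(i)$, and the gap counts $g_i$ with $R(i)$, $r(i)$, and the row-length differences $\lambda_{R(i)}-\lambda_{r(i)}$, and matching beads $b>N+n$ with the $d$ rows whose diagonal box has hook length exceeding $2n$. Your write-up is in fact somewhat more explicit than the paper's (e.g., in justifying that the $n$ vertical steps of the extended boundary are exactly the beads in positions $n+1,\dots,N+n$, one per runner pair), but the substance is the same.
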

\begin{proof}
In a core of the first type, all boxes are skew. The sum counts the number of boxes in the bounded partition, because the number of boxes between the diagonal and $\lam_i$ inclusive is $\lam_i-i+1$.  

In a core of the second type, there exists a box in $\lam$ with hook length at least $2n$.  Hence the boundary of $\lam$ extends more than $n$ steps out in each direction from its center, and $r(i)$ is well defined since in the abacus $a=\Fa(\lam)$, bead $b(i)$ exists between $n+1$ and $N+n$ on runner $B(i)\p N$, the runner number of row $R(i)$.  The first two terms in the formula translate directly from Proposition~\ref{p:e_cox} and correspond to contributions from beads $b>N+n$.  Beads $b\leq N+n$ correspond to rows of $\lam$ starting with row $d+1$; again the number of boxes between the diagonal and $\lam_i$ inclusive is $\lam_i-i+1$.
\end{proof}

\begin{example}
Consider the core $\lam=(12, 12, 8, 8, 7, 5, 5, 4, 2, 2, 2, 2)\in \tld{B}_3/D_3$, pictured in Figure~\ref{fig:compact} with runner numbers.  

\begin{figure}[h] 
\begin{center}
\epsfig{figure=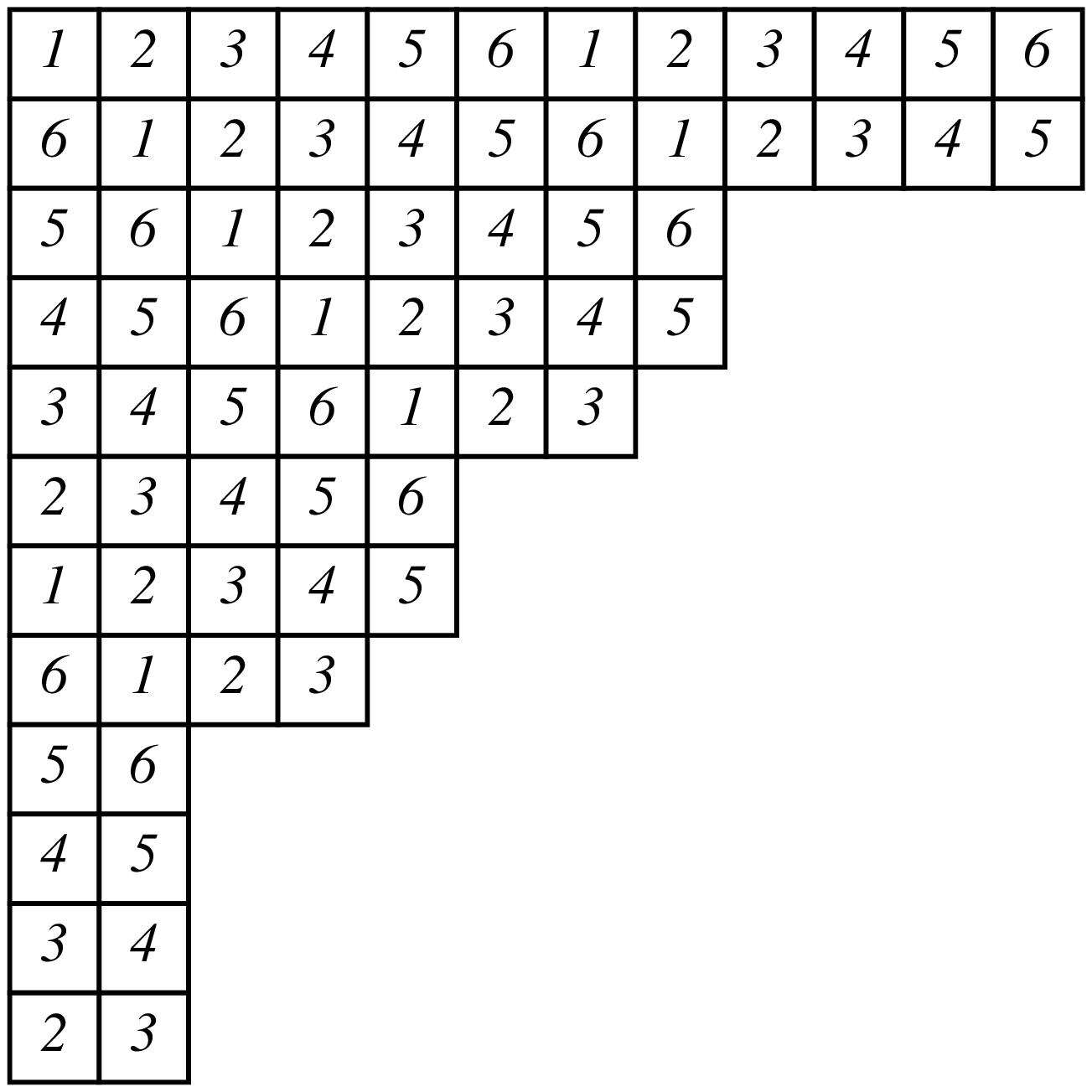,width=2in}
\end{center}
\caption{The core partition $\lam=(12, 12, 8, 8, 7, 5, 5, 4, 2, 2, 2, 2)\in \tld{B}_3/D_3$.  The numbers in the boxes are the runner numbers.}
\label{fig:compact}
\end{figure}

\noindent
This partition has parts of size larger than $3$, so we apply the second half of Proposition~\ref{p:core_length}.  The runners corresponding to $i=1$, $2$, and $3$ are have runner numbers $6$, $5$, and $3$.  Consequently, 
$\lam_{R(1)}=\lam_1=12$, 
$\lam_{R(2)}=\lam_2=12$, 
$\lam_{R(3)}=\lam_5=7$, 
$\lam_{r(1)}=\lam_6=5$, 
$\lam_{r(2)}=\lam_7=5$, 
$\lam_{r(3)}=\lam_5=7$, and $d=4$.
We conclude that $\ell\big(\Fw(\lam)\big)=(12-5)+(12-5)+(7-7)+(1+0-1)\cdot 4+(3)=17$.  
\end{example}

Another formula for Coxeter length can be obtained summing contributions from
the longest rows in $\Fc(w)$ ending with a given runner number, and then
subtracting terms to account for boxes that are counted twice in the peeling
process.  This gives a second analogue of Proposition 3.2.8 in
\cite{berg-jones-vazirani}.

\begin{definition}
Let $\lambda$ be a symmetric $(2n)$-core.  Let $D$ be the lowest box on the main
diagonal of $\lambda$.  For $1 \leq i \leq n$, let $R(i)$ be index of the
longest row $\lambda_{R(i)}$ of $\lambda$ having rightmost box from runner $i$
or $N-i$.

The {\bf rim} of $\lambda$ consists of the boxes from $\lambda$ that have no
box lying directly to the southeast.  We define the {\bf rim walk} $W(i)$ from
the rightmost box $B$ of $\lambda_{R(i)}$ to be the collection of boxes
encountered when walking along the rim of $\lambda$ from $B$ towards $D$, ending
at the last box encountered from runner $\min(i, N-i)$ prior to $D$.  We let
$h(i)$ denote the {\bf height} of $W(i)$, defined to be the number of rows of
$\lambda$ that intersect $W(i)$ and end in a box having a runner number that is
different from the last box of $\lambda_{R(i)}$.
\end{definition}

\begin{theorem}
Let $\lambda$ be a symmetric $(2n)$-core.  The Coxeter length of $\Fw(\lambda)$ is
\[ \ell(\Fw(\lambda)) = \sum_{1 \leq i \leq n} \left(\lambda_{R(i)} - R(i) - h(i) + 1 \right) + x_0 d_0 + x_n d_n. \]
where $d_i$ is the number of boxes on the $i$-th diagonal of $\lambda$.
\end{theorem}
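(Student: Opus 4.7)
The plan is to re-express $\ell(\Fw(\lambda))$ as a sum over the $n$ runner pairs $\{i,N{-}i\}$ by combining Proposition~\ref{p:peeling} with Theorem~\ref{t:upperfromlam}, which together imply that the Coxeter length equals $|\widetilde{U}_\lambda|$, the number of boxes in the bounded diagram. For each $i \in \{1,\ldots,n\}$, let $\mathcal{R}_i$ denote the set of row indices $r$ of $\lambda$ whose rightmost box lies on a runner in $\{i,N{-}i\}$. Since the abacus of $\lambda$ is flush, these sets partition the rows of $\lambda$ corresponding to nontrivial active beads, and the longest row in $\mathcal{R}_i$ is precisely $\lambda_{R(i)}$.

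First, I would fix $i$ and sum the main-diagonal-anchored row segments $(\lambda_r - r + 1)$ over $r \in \mathcal{R}_i$; this counts the boxes of $\widetilde{U}_\lambda$ coming from rows in $\mathcal{R}_i$, before applying any type-dependent diagonal corrections. Next, I would walk along the rim of $\lambda$ from the rightmost box of row $R(i)$ down to the main diagonal and observe that the rim walk $W(i)$ is governed by the same sequence of gaps and beads on runners $i$ and $N-i$ that determines the consecutive row lengths in $\mathcal{R}_i$. The crucial identity to establish is the telescoping
\[ \sum_{r \in \mathcal{R}_i}(\lambda_r - r + 1) \;=\; \lambda_{R(i)} - R(i) - h(i) + 1, \]
which amounts to checking that the rows in $\mathcal{R}_i$ strictly below row $R(i)$ pair off with horizontal stretches of $W(i)$ between successive bead-rows on runners $i$ or $N-i$, while the rows ending in other runners, which account for the vertical stretches of $W(i)$, are exactly the $h(i)$ rows counted by the height. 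Summing over $i=1,\ldots,n$ and then applying the offsets $x_0 d_0$ and $x_n d_n$, which remove boxes along the main and $n$-th diagonals exactly as prescribed in the definition of $\widetilde{U}_\lambda$ for the relevant types, would yield the claimed formula.

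The main obstacle is the telescoping identity above, because one must track two simultaneous structures: the sequence of bead positions on the paired runners $i$ and $N-i$, whose balance is governed by Lemma~\ref{l:ol_balance}, and the global rim walk of $\lambda$, which interleaves rows from all runner pairs. A cleaner alternative is to derive the theorem from Proposition~\ref{p:core_length} by proving the auxiliary equality $\lambda_{r(i)} = R(i) + h(i) - 1$ whenever $r(i)$ is defined, together with a direct identification of the leftover terms $\sum_{d+1 \leq i \leq k}\max(0,\lambda_i - i + 1 + x_0)$ with the short-row contributions from beads in $(N, N+n]$; this would reduce the problem to a finite case analysis of how the rim walk encounters the main and $n$-th diagonals, where the correct handling of the boxes on those diagonals supplies the final offsets $x_0 d_0$ and $x_n d_n$.
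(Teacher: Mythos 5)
Your reduction of $\ell(\Fw(\lambda))$ to $|\widetilde{U}_\lambda|$ and your grouping of rows by runner pairs $\{i,N-i\}$ both match the paper's strategy, but the identity you make the centerpiece of the argument is false, as is the claim feeding into it. First, $\lambda_r-r+1$ is the number of boxes of $\lambda$ lying weakly to the right of the main diagonal in row $r$; it is \emph{not} the number of boxes that row contributes to $\widetilde{U}_\lambda$. The bounded diagram retains only the main-diagonal box together with the \emph{skew} boxes of the row (left-justified), and whenever the bead of row $r$ exceeds $N+n$ the row contains non-skew boxes to the right of the main diagonal, so $\lambda_r-r+1$ strictly overcounts. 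Second, the proposed telescoping identity fails on the paper's own example $\lambda=(12,12,8,8,7,5,5,4,2,2,2,2)$ in $\tld{B}_3/D_3$: the rows whose rightmost box lies on runner $1$ or $6$ are rows $1,3,6,9$, of which only rows $1$ and $3$ meet the main diagonal, so $\sum_{r\in\mathcal{R}_1}(\lambda_r-r+1)$ equals $12+6=18$ (or $12$ if the short rows are included), whereas $\lambda_{R(1)}-R(1)-h(1)+1=12-1-3+1=9$. The discrepancy is precisely the overcount from the non-skew boxes, so the two errors do not cancel.

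The quantity $\lambda_{R(i)}-R(i)+1$ enters the paper's proof in a different role: it is the number of gaps lying weakly between $g(B(i))$ and $B(i)$ for the \emph{single} lowest bead $B(i)$ on the runner pair, not a sum over rows. The proof then shows that each gap strictly between $N$ and $B(i)$ supplies one skew box to exactly one row on that runner pair (consecutive beads on a runner differ by $N$, and a row's skew boxes are the gaps in the length-$N$ window below its bead, by Lemma~\ref{l:max}), and that the surplus gaps below $\max(i,N-i)$ which are not symmetric gaps $g(b')$ of beads on the runner correspond, via Lemma~\ref{l:ab_balance}, to the $h(i)$ rows of the rim walk ending on other runners. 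Your fallback route through Proposition~\ref{p:core_length} is blocked by the same example: the auxiliary equality $\lambda_{r(i)}=R(i)+h(i)-1$ fails, since $\lambda_{r(1)}=\lambda_6=5$ while $R(1)+h(1)-1=3$. To repair your argument you would need to replace $\lambda_r-r+1$ by $1+\#\{\text{skew boxes of row }r\}$ for rows with bead exceeding $N+n$ (and by $\lambda_r-r+1$ only for beads in $(N,N+n]$), and then carry out the gap count on the abacus as the paper does.
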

\begin{proof}
By Theorem~\ref{t:upperfromlam}, the Coxeter length is equal to the number of
boxes in the bounded diagram $\widetilde{U}_{\lambda}$ of $\lambda$.  There is
one row in $\widetilde{U}_{\lambda}$ for each bead in $\Fa(\lambda)$ succeeding
$N$ in reading order, and the number of boxes on each row in
$\widetilde{U}_{\lambda}$ is the number of skew boxes in the row, together with
entries from the $0$-th and $n$-th diagonals, depending on the Coxeter type.

We claim that the number of skew boxes from all rows ending with a box from
runner $i$ is equal to $\lambda_{R(i)} - R(i) + 1 - h(i)$.  Depending on the
Coxeter type, we will also subtract boxes corresponding to $x_0 d_0$ and $x_n
d_n$ in the last step of the construction of $\widetilde{U}_{\lambda}$.

To see why the claim is true, consider the last bead $b$ in reading order lying
on runner $i$.  This bead has $\lambda_{R(i)} - R(i) + 1$ gaps lying weakly
between $g(b)$ and $b$.  Each gap lying between $N$ and $b$ corresponds to a
skew box for precisely one bead lying on runner $i$ to the right of $N$ in
reading order.  

Since $N+i$ is the earliest bead on runner $i$ in reading order that corresponds
to a row lying above the main diagonal, we must therefore subtract the gaps
prior to position $\max((N+i)-N,g(N+i)) = \max(i,N-i)$ that are not of the form
$g(b')$ for any bead $b'$ on runner $i$.  

By Lemma~\ref{l:ab_balance}, such gaps correspond to beads lying between
$\min(i, N-i)$ and $b$ that do not lie on runner $i$.  This quantity is equal to
the height of our rim walk beginning at the last box of row $R(i)$, which
corresponds to $B$.

Including contributions for each runner $i$ yields the sum given in the formula.
\end{proof}

\begin{example}
Consider the core $\lam=(12, 12, 8, 8, 7, 5, 5, 4, 2, 2, 2, 2)\in \tld{B}_3/D_3$, pictured in Figure~\ref{fig:compact} with runner numbers.  

We have $R_1 = 1$, $R_2 = 2$ and $R_3 = 5$.  The rim walk from the last box $B$ of
$\lam_1$ consists of all $12$ boxes on the rim lying between $B$ and $D$, and
there are $3$ rows (namely, $2$, $4$, and $5$) that do not end in runner $6$.
Hence, $h(1) = 3$.  The rim walk from the last box $B'$ of $\lam_2$ consists of
the $10$ boxes on the rim lying between $B'$ and the box immediately right of
$D$.  There are $2$ rows (namely, $3$ and $5$) that do not end in runner $5$, so
$h(2) = 2$.  The rim walk from the last box $B''$ of $\lam_5$ consists of the
single box $B''$ because there are no other boxes from runner $3$ lying on the
rim between $B''$ and $D$.  Hence, $h(3) = 0$.

Thus, we compute $\ell(\Fw(\lam))$ as
\[ \left( \left((\lambda_{R(1)} - R(1) + 1) - h(1)\right) +
\left((\lambda_{R(2)} - R(2) + 1) - h(2)\right) + \left((\lambda_{R(3)} - R(3) +
1) - h(3)\right) \right) - d_n \]
\[ = \left( (12 - 3) + (11 - 2) + (3 - 0) \right) - 4 = 17. \]
\end{example}

\bigskip
\section{Proofs}\label{s:proofs}
\subsection{Residues for core partitions}

We now turn to the proof of Theorem~\ref{t:core_action}.

\begin{proof}{\em [of Theorem~\ref{t:core_action}]\ \ }
Given $w \in \widetilde{W}_n/W_n$, suppose that $a = \Fa(w)$ and $\lambda =
\Fc(w)$.  Let the residues be assigned to entries of $\N^2$ near the southeast
boundary of the diagram of $\lambda$ as in Definition~\ref{d:residue}.
Consider the application of a generator $s_i$.  

By Lemma~\ref{l:ab_balance}, the midpoint of the boundary lattice path of
$\lambda$ occurs at entry $N$ in the abacus, and this corresponds to the
outermost corner of the lowest box on the main diagonal in $\lambda$.  
In every type, the boxes on the main diagonal are assigned residue 0.  

Because the assignment of fixed residues is constant along northwest-southeast
diagonals, the assignment of fixed residue to entry $(i,j)$ is the same as the
assignment of fixed residue to entry $(i-u, j+v)$ whenever $u, v \geq 0$ and
$u+v = 2n$.  From this it follows that every bead on a given runner is assigned
the same fixed residue.  
Moreover, we find that all of the beads in runner $j$ correspond to boxes with
residue 
\[ \begin{cases} j-1 & \text{ if } 1 \leq j \leq n+1 \\
    2n-j+1 & \text{ if } n+2 \leq j \leq 2n. \\
\end{cases} \]

We observe that the connected components of boxes with fixed residue are always
single boxes.  A box with fixed residue $i$ is removable if and only if it lies
at the end of its row and column, which therefore occurs if and only if it
corresponds to an active bead on runner $i+1$ or $2n-i+1$ with a gap immediately
preceding it in the reading order of the abacus.  Similarly, a box with fixed
residue $i$ is addable if and only if it corresponds to a gap on runner $i+1$ or
$2n-i+1$ with an active bead immediately preceding it in the reading order of
the abacus.  The action of $s_i$ in type $\widetilde{C}_n$ swaps runners $i$ and
$i+1$ as well as $2n-i$ and $2n-i+1$ which therefore interchanges all of the
$i$-addable and $i$-removable boxes.  

Since the abacus is flush, exchanging runners $i$ with $i+1$ and $2n-i$ with
$2n-i+1$ (mod $N$) either adds some set of boxes with residue $i$ to the diagram
of $\lambda$ in the case that $s_i$ is an ascent, or else removes a set of boxes
with residue $i$ in the case that $s_i$ is a descent.  If $s_i$ is neither an
ascent nor a descent then the levels of the lowest beads in the relevant columns
are the same, so the abacus and corresponding core remain unchanged.  

This proves the result in the case when $s_i$ ($0 \leq i \leq n$) is a
generator of type $\widetilde{C}$.  The generators $s_{n-1}$ and $s_n^D$ apply
to entries in an escalator, which we consider below.  The generators $s_1$ and
$s_0^D$ apply to entries in an descalator, and the argument is entirely similar
so we omit it.

To verify the result for $s_{n-1}$ and $s_n^D$, we consider the action of these
generators on all possible abaci.  It suffices to consider the action on a
single row of the abacus since this translates into a connected segment of the
lattice path boundary of the core partition.  Using the results for type
$\widetilde{C}$ generators that we have already shown above, we observe that
the lattice path always begins on the boundary of a box with residue $n-2$ by
induction.

{\bf Case:}  The abacus row contains a single bead among the positions
$\{n-1,n,n+1,n+2\}$.

Then, the abaci satisfy the following commutative diagram.

\[ 
 
\]

This exhausts the cases.
\end{proof}

\bigskip
\subsection{The upper partition}\label{s:upper_partition}

We now turn to the proof of Theorem~\ref{t:upperfromlam}.

\begin{proof}{\em [of Theorem~\ref{t:upperfromlam}]\ \ }
Fix a core partition $\lambda$ and its associated abacus $a = \Fa(\lambda)$.  Recall that a box in $\lambda$ having hook length $< 2n$ is called {\bf skew}.  Define the bounded diagram $\widetilde{U}_\lam$ as in Section~\ref{s:bounded_diagram}.
We say that a box of $\lambda$ lying in the bounded diagram $\widetilde{U}_\lambda$ is {\bf bounded}.  We will prove that the bounded boxes are the boxes peeled in the central peeling procedure. 

To prove Theorem~\ref{t:upperfromlam}, we work by induction on Coxeter length, investigating the application of one step of the central peeling process.  For the remainder of this proof, let $B$ be the rightmost box in the row $r$ of $\lam$ containing the lowest box on the reference diagonal, $b$ be its corresponding active bead in $\Fa(\lam)$, and let $i$ be the residue of $B$.  
When we apply $s_i$ to remove $B$, we claim that:
\begin{itemize}
    \item[(1)]  $B$ is bounded.
    \item[(2)]  We remove no other bounded box.
    \item[(3)]  If a box $B'$ was bounded in $\lambda$, then $B'$ is also bounded in $s_i(\lambda)$.
\end{itemize}
Proving these claims complete the proof of Theorem~\ref{t:upperfromlam} because they show that during the central peeling procedure the bounded boxes of $\lam$ remain bounded boxes in intermediary steps and that exactly one bounded box of $U_\lam$ is peeled in each step.

\medskip
In terms of the abacus, $b$ is essentially the first bead succeeding $N$ in reading order.  However, if we are in a type that uses $s_0^D$ then there may be a bead in position $N+1$ that cannot be moved to the left because position $N+2$ is a gap.  In this case, $b$ is the next bead in reading order after $N+1$.  In all cases, $b \leq 2N+1$ by the Balance Lemma~\ref{l:ol_balance}.

\medskip
We first prove (1) using the definition of $(2n)$-core.  When we are in a type that uses $s_0^C$, then $r$ is the row containing the lowest box on the main diagonal.  All boxes in row $r$ starting from the first diagonal to $B$ all have hook length less than $2n$, so they are all skew and consequently bounded as well, including $B$.  When we are in a type that uses $s_0^D$, then $r$ is the row containing the lowest box on the first diagonal.  All boxes in row $r$ starting from the first diagonal to $B$ all have hook length less than or equal to $2n$; equality occurs only when $r$ has a box in the $2n$-th diagonal and row $r+1$ has a box on the main diagonal.  The skew boxes in $r$ are therefore all boxes between the second and $2n$-th diagonal; this implies that the bounded boxes in $r$ are all boxes between the main and $(2n-1)$-st diagonal, this last box being $B$ by Definition~\ref{def:cpp}.

\medskip
To prove (2), consider some box $B'$ that is removed when applying $s_i$.  We will show that there are non-skew boxes in the row containing $B'$ which imply that $B'$ is not bounded.   Define $\widehat{B}$ to be the lowest box on the $r$-th column.  This is the reflection of $B$ by the main diagonal and in the abacus this column corresponds to the symmetric gap $g(b)$.  Suppose that we apply generator $s_i$ for $1\leq i\leq n-1$ or $s_i=s_n^C$.   We note that both $\widehat{B}$ and $B'$ have residue $i$ and are in non-consecutive diagonals containing this residue.  Therefore the box that is at the same time above $\widehat{B}$ and to the left of $B'$ has hook length at least $2n$ and is therefore not skew.  

When we apply the generator $s_0^C$, the box $B$ is on the main diagonal and the box $B'$ is on the $j$-th diagonal for $j\geq 2n$.  Therefore the box that is at the same time above $B$ and to the left of $B'$ has hook length at least $2n$ and is therefore not skew. 

When we apply the generator $s_0^D$, we remove two boxes in $B'$'s row; we prove the existence of the two required gaps.  In this case, $B$ is on the first diagonal, with a box directly below, a box to the left, and the box $\widehat{B}$ to the lower-left, all four of which are removed when $s_0^D$ is applied.  The two boxes to the left of $B'$ that are above $B$ and $\widehat{B}$ both have hook length at least $2n$.  

When we apply the generator $s_n^D$, we also remove two boxes in $B'$'s row.  If $B'$ is on the $j$-th diagonal where $j\geq N+n-1$, then the box above $\widehat{B}$ and to the left of $B'$ has hook length at least $2n$, as does its right neighbor.  The last case is that $B'$ is in diagonal $n+1$ while $B$ is in diagonal $n-1$.  In this case, $B$ and $B'$ are in the same block of four boxes which are removed upon the application of $s_n^D$.  There are $n$ bounded boxes in $B'$'s row, not including $B'$.
This exhausts the cases.

\medskip
To prove (3), we show that the number of skew boxes on each row of $\lambda$ above $B$ is equal to the number of skew boxes on each row of $s_i(\lambda)$ above $B$.  Left-justifying these skew boxes to the main diagonal then gives the same bounded region.  

We consider first generators $s_i$ where $1\leq i\leq n-1$ and $s_n^C$.  Let $b'$ be a bead lying to the right of $b$ in reading order.  
If $b'\leq N+n$, then all boxes between the main diagonal and $B'$ are skew.  
No boxes in this row are removed upon application of $s_i$ because the residue of $B'$ is different from the residue of $B$.  
If $b'>N+n$, then the number of skew boxes is the number of gaps between $b'$ and $b'-N$.  
If the action of $s_i$ fixes runner $b'\p N$, then the number of gaps between $b'$ and $b'-N$ does not change.  

Otherwise $b'$ is on runner $b\p N$ or runner $(N-b+1)\p N$.  If $b'$ is on runner $b\p N$, then since $b-1$ is a gap, then so is $b'-N-1$.  If $b'$ is on runner $(N-b+1)\p N$, then we notice that because $b$ is a bead then $N-b$ is a gap, and consequently so is $b'-N-1$.  In both cases, if we restrict our attention to the entries between $b'$ and $b'-N$, we see that in the application of $s_i$, we lose a gap in position $b'-1$ but gain a gap in position $b'-N+1$, so there is no net change.

In the case of $s_0^C$, both $b'$ and $b$ are on runner $1$, so because $b-2=2n$ is a gap, so is $b'-N-2$, and we see that in the application of $s_0^C$, we lose a gap in position $b'-2$ but gain a gap in position $b'-N+2$.

If $b'$ is involved in a transposition under $s_0^D$, then there are two cases.  Either $b$ is in position $N+2$ or $b$ is in position $2N+1$.  In the former case, there is a bead in position $N+1$, and there is a gaps in positions $2n-1$ and $2n$.  Consequently, $b'$ is in either runner $1$ or $2$, and there are gaps $g'$ and $g''$ to the left of $b'$ on runners $2n-1$ and $2n$, as well as on the level below.  When we apply $s_0^D$, we therefore lose the gaps in runners $2n-1$ and $2n$, but then gain them back in runners $1$ and $2$.  In the latter case, the only beads lower than $N$ are in runner $1$, and there is a gap in position $2n$.  Consequently, $b'$ is also in runner $1$, and there are gaps in positions $b'-2$ and $b'-N-2$.  When we apply $s_0^D$, we lose the gap $b'-2$ but then gain back $b'-N+2$.

Finally, if $b'$ is involved in a transposition under $s_n^D$, then $b$ is in position $N+n+1$ or $N+n+2$, and there are gaps in positions $N+n-1$ and $N+n$.  Consequently, $b'$ is in either runner $n+1$ or $n+2$, and there are gaps $g'$ and $g''$ to the left of $b'$ on runners $n$ and $n-1$, as well as on the level below.  When we apply $s_n^D$, we therefore lose the gaps in runners $n$ and $n-1$, but then gain them back in positions $g'-N+2$ and $g''-N+2$.  This completes the proof of (3).
\end{proof}

\bigskip
\section{Future work}\label{s:future_work}

There are various results (\cite{clark-ehrenborg}, \cite{armstrong--rhoades}, \cite{fishel--vazirani},
\cite{hanusa-jones}, to name some recent examples) involving type $\widetilde{A}$
objects that we expect have analogues in the other types.

Another family of combinatorial objects in bijection with $\widetilde{C}_n/C_n$
is the lecture hall partitions of Bousquet-M\'elou and Eriksson \cite{BME}.  It
may be interesting to see whether some analogue of these partitions exists in
the other affine types.

\bigskip
\section*{Acknowledgements}

This work has benefited from conversations with M. Vazirani and C. Berg.  We
also thank D. Armstrong, S. Fishel, R. Green, E. O'Shea, and A. Schilling for
helpful discussions.  C.\ R.\ H.\ Hanusa gratefully acknowledges support from
PSC-CUNY Research Awards PSCREG-41-303 and TRADA-42-115.  We thank the
anonymous reviewer for several helpful suggestions.


\def\cprime{$'$}

\end{document}